\numberwithin{equation}{section}
\def\@cite#1#2{{\m@th\upshape\bfseries%
[{#1\if@tempswa{\m@th\upshape\mdseries, #2}\fi}]}}
\theoremstyle{plain}
\newtheorem{theorem}{Theorem}[section]
\newtheorem{corollary}[theorem]{Corollary}
\newtheorem{proposition}[theorem]{Proposition}
\newtheorem{lemma}[theorem]{Lemma}
\theoremstyle{definition}
\newtheorem{definition}[theorem]{Definition}
\newtheorem{example}[theorem]{Example}
\newtheorem{remark}[theorem]{Remark}
\newtheorem*{acknow}{Acknowledgements}
\theoremstyle{remark}
\renewcommand{\qedsymbol}{{\vrule height5pt width5pt depth1pt}}
  \newcommand{\A}{{\mathcal{A}}}
  \newcommand{\B}{{\mathcal{B}}}
  \newcommand{\C}{{\mathcal{C}}}
  \newcommand{\D}{{\mathcal{D}}}
  \newcommand{\F}{{\mathcal{F}}}
  \newcommand{\I}{{\mathcal{I}}}
  \newcommand{\K}{{\mathcal{K}}}
\renewcommand{\L}{{\mathcal{L}}}
  \newcommand{\M}{{\mathcal{M}}}
  \newcommand{\R}{{\mathcal{R}}}
\renewcommand{\S}{{\mathcal{S}}}
  \newcommand{\T}{{\mathcal{T}}}
\newcommand{\eps}{\varepsilon}
\def\al{\alpha}
\def\la{\lambda}
\def\La{\Lambda}
\def\si{\sigma}
\newcommand\wpi{\widetilde{\pi}}
\newcommand\wsi{\widetilde{\sigma}}
\newcommand{\bC}{\mathbb{C}}
\newcommand{\bN}{\mathbb{N}}
\newcommand{\foral}{\text{ for all }}
\newcommand{\qand}{\quad\text{and}\quad}
\newcommand{\qfor}{\quad\text{for}\ }
\newcommand{\ca}{\mathrm{C}^*}
\newcommand{\cenv}{\mathrm{C}^*_{\textup{env}}}
\newcommand{\tenv}{\mathcal{T}_{\textup{env}}}
\newcommand{\wt}{\widetilde}
\newcommand{\Alg}{\operatorname{Alg}}
\newcommand{\id}{{\operatorname{id}}}
\newcommand{\ran}{\operatorname{ran}}
\newcommand{\sca}[1]{\left\langle#1\right\rangle} 
\newcommand{\nor}[1]{\left\Vert #1\right\Vert} 
\newcommand{\ncl}[1]{\left[ #1 \right]^{-\|\cdot\|}} 
\newcommand{\wcl}[1]{\left[ #1 \right]^{-w^*}} 
\begin{document}

\title[Strong Morita equivalence of operator spaces]{Strong Morita equivalence of \\ operator spaces}

\author[G.K. Eleftherakis]{George K. Eleftherakis}
\address{Department of Mathematics\\Faculty of Sciences\\University of Patras\\26504 Patras\\Greece}
\email{gelefth@math.upatras.gr}

\author[E.T.A. Kakariadis]{Evgenios T.A. Kakariadis}
\address{School of Mathematics and Statistics\\ Newcastle University\\ Newcastle upon Tyne\\ NE1 7RU\\ UK}
\email{evgenios.kakariadis@ncl.ac.uk}

\thanks{2010 {\it  Mathematics Subject Classification.} 47L25, 46L07}

\thanks{{\it Key words and phrases:} Operator spaces, ternary ring of operators, Morita contexts for operator algebras, TRO envelope, C*-envelope.}

\maketitle

\vspace{-.5cm}
\begin{center}
{\small \emph{Dedicated to the memory of Uffe Valentin Haagerup}}
\end{center}

\begin{abstract}
We introduce and examine the notions of strong $\Delta$-equiva\-len\-ce and strong TRO equivalence for operator spaces.
We show that they behave in an analogous way to how strong Morita equivalence does for the category of C*-algebras.
In particular, we prove that strong $\Delta$-equivalence coincides with stable isomorphism under the expected countability hypothesis, and that strongly TRO equivalent operator spaces admit a correspondence between particular representations.
Furthermore we show that strongly $\Delta$-equivalent operator spaces have stably isomorphic second duals and strongly $\Delta$-equivalent TRO envelopes.
In the case of unital operator spaces, strong $\Delta$-equivalence implies stable isomorphism of the C*-envelopes.
\end{abstract}

\section{Introduction}

In the 1950's Morita \cite{Mor58} introduced a notion of functorial equivalence for rings.
Morita's seminal work was popularized later by Bass \cite{Bas62}, and consists mainly of the Morita Theorems I, II, and III; see also \cite{Jac89}.
Motivated by the approach of Mackey \cite{Mac52, Mac53, Mac58} on representations of locally compact groups, Rieffel \cite{Rie74, Rie74-2} brought the analogues of Morita Theorems into the field of non-commutative geometry.
To this end Rieffel introduced a version of Morita equivalence for C*-algebras that implies stronger results.
Brown, Green and Rieffel \cite{Bro77, BGR77} introduced later what is sometimes called Morita Theorem IV: in the $\si$-unital case, strong Morita equivalence coincides with stable isomorphism.
The reader is also directed to the survey \cite{Rie82} by Rieffel.
The important aspect of strong Morita equivalence and stable isomorphism is the match of the intrinsic structure of C*-algebras that they induce.
From one point of view, strong Morita equivalence may be viewed as a generalised unitary equivalence (compare with equation (\ref{eq:tro}) that follows).

Their central role in representation theory has been a source of inspiration in the last 20 years for achieving Morita Theorems for a wider range of classes in operator theory.
The breakthrough in this direction came with the work of Blecher, Muhly and Paulsen \cite{BMP00} on operator algebras for Morita Theorems I and IV.
Later Blecher \cite{Ble01} added the relative Morita II and III parts.
Extensions to dual operator algebras were given by Blecher and  Kashyap \cite{BleKas08} and Kashyap \cite{Kas09}, for which the first three Morita Theorems were proven.
These works rely on the duality flavour of Morita equivalence, i.e. that the algebras $X$ and $Y$ can be decomposed into stabilized tensor products
\begin{equation}\label{eq:dec}
X \simeq M \otimes_Y N \qand Y \simeq N \otimes_X M
\end{equation}
of two appropriate bimodules $M$ and $N$.
The notion of tensor product (which varies each time) is used as a generalised multiplication rule.

Nevertheless, strong Morita equivalence in the case of C*-algebras requires that $M$ is a \emph{ternary ring of operators (TRO)}, i.e. $M M^* M \subseteq M$, and implies that $N = M^*$; see for example \cite[Section 2]{Ske09}.
Then a concrete interpretation hints that a second Morita theory for dual operator algebras is possible by defining $X$ to be \emph{equivalent} to $Y$ when there are completely isometric normal representations $\phi$ and $\psi$, and a TRO $M$ such that
\begin{equation}\label{eq:tro}
\phi(X) = \wcl{M \psi(Y) M^*} \qand \psi(Y) = \wcl{M^* \phi(X) M}.
\end{equation}
Considering this as the starting point, an alternative approach for dual operator algebras was developed.
The notion of $\Delta$-equivalen\-ce was introduced by the first author in \cite{Ele08}, and the first three Morita Theorems were proven for a certain category of modules over the algebras.
The appropriate Morita Theorem IV in this setting was later given by the first author and Paulsen \cite{ElePau08}.
A further generalization to the broader class of dual operator spaces was achieved by the first author with Paulsen and Todorov \cite{EPT10}.

Both extensions of Morita theory (versions (\ref{eq:dec}) and (\ref{eq:tro})) have advantages and a number of applications, e.g. \cite[Chapter 8]{BMP00}, \cite[Examples, p.p. 2405--2406]{BleKas08} and \cite[Section 3]{EPT10}.
As they both fit in the wider scheme of functorial equivalence they show resemblances and differences.
Relation (\ref{eq:tro}) implies relation (\ref{eq:dec}) as indicated by Blecher and Kashyap \cite[Acknowledgements]{BleKas08}.
The first author \cite{Ele10} has shown that relation (\ref{eq:tro}) is strictly stronger than relation (\ref{eq:dec}) for dual operator algebras.
This is because relation (\ref{eq:tro}) implies all four Morita Theorems, whereas relation (\ref{eq:dec}) does not imply in general a Morita IV Theorem.
In particular two nest algebras are equivalent in the (\ref{eq:dec}) version of \cite{BleKas08, Kas09} if and only if the nests are isomorphic, whereas they are equivalent in the (\ref{eq:dec}) version of \cite{Ele10} if and only if the isomorphism of the nests extends to an isomorphism of the von Neumann algebras they generate; see the work of the first author \cite{Ele13}.

The work on dual operator algebras was recently carried over to operator algebras by the first author \cite{Ele14}.
The appropriate $\Delta$-equivalence resembles to the (\ref{eq:tro}) relation where the closure is taken in the norm topology.
It is shown in \cite{Ele14} that this Morita context is strictly stronger than that of \cite{BMP00}, and in addition it satisfies the fourth element.

In the current paper we wish to move forward to the category of operator spaces.
By following the Morita context of (\ref{eq:tro}) we say that two operator spaces $X$ and $Y$ are \emph{strongly $\Delta$-equivalent} if there are completely isometric representations $\phi$ and $\psi$, and two TRO's $M_1$ and $M_2$ such that
\begin{equation}\label{eq:tro op sp}
\phi(X) = \ncl{M_2 \psi(Y) M_1^*} \qand \psi(Y) = \ncl{M_2^* \phi(X) M_1},
\end{equation}
(Definition \ref{D: Delta op sp}).
Our present aim is to research relations with the aforementioned equivalence relations.
We focus on the first and the fourth part of the suggested Morita theory and applications.
All results indicate that the (\ref{eq:tro op sp}) version has a canonical behaviour and blends well with previous Morita contexts.
The other parts of the Morita theory are to pursue elsewhere, as further focus is required for the analysis of the appropriate class of representations.

The first part is devoted in showing that strong $\Delta$-equivalence is indeed an equivalence relation (Theorem \ref{T: eq delta}).
To this end we also use a concrete version, that of strong TRO equivalence (Subsection \ref{Ss: TRO}).
It appears that the latter is more tractable and rather useful for our purposes.
One of the key tools is that strong TRO equivalence implies a bijection between non-degenerate representations of a certain class (Proposition \ref{P: bij rep}).
This result may be viewed as a part of Morita I Theorem.

In Section \ref{S: stable} we exploit the connection of strong $\Delta$-equivalence with stable isomorphism.
We show that in general stable isomorphism is stronger (Theorem \ref{T: stable gives delta}).
Nevertheless the two notions coincide when the operator spaces are separable (Corollary \ref{C: sep}) or unital (Corollary \ref{C: unital}).
These follow as consequences of two key results (Theorem \ref{T: key} and Corollary \ref{C: si-unital}) concerning a countability hypothesis on the related C*-algebras
\[
A = \ncl{M_2M_2^*}, \, B = \ncl{M_1M_1^*}, \, C = \ncl{M_2^*M_2}, \, D = \ncl{M_1^*M_1}.
\]

In Section \ref{S: app} we give applications of our results and connections with the literature.
First we prove that if two operator algebras with contractive approximate identities are strongly $\Delta$-equivalent as operator spaces then they are strongly Morita equivalent in the sense of Blecher, Muhly and Paulsen \cite{BMP00} (Theorem \ref{T: delta gives BMP}).
In fact this implication is strict (Remark \ref{R: delta gives BMP strict}).
As a corollary we then get that two C*-algebras are strongly $\Delta$-equivalent as operator spaces if and only if they are strongly Morita equivalent in the sense of Rieffel \cite{Rie74} (Corollary \ref{C: same in C*}).
Furthermore we show that the second duals of strongly $\Delta$-equivalent operator spaces are stably isomorphic in the sense of \cite{EPT10} (Theorem \ref{T: second dual}).
Finally we examine the impact of strong $\Delta$-equivalence to their TRO envelopes (or their C*-envelopes in the unital case) in the sense of Hamana \cite{Ham99} and Arveson \cite{Arv69}.
In particular we show that strong $\Delta$-equivalence of the operator spaces $X$ and $Y$ implies strong $\Delta$-equivalence of their TRO envelopes $\tenv(X)$ and $\tenv(Y)$ (Theorem \ref{T: tro env}).
Similarly, stably isomorphic unital operator spaces admit stably isomorphic C*-envelopes (Theorem \ref{T: cenv}).

\section{Preliminaries}\label{S: Pre}

All operators act on Hilbert spaces and are bounded.
Therefore when we write $\B(K_1, K_2)$ we will automatically assume that the spaces $K_1$ and $K_2$ are Hilbert spaces.
All limits are taken in the norm topology (either that of $\B(K_1, K_2)$ or that of $K_1$ or $K_2$), unless otherwise specified.
If a space $X$ acts on a Hilbert space $K$ then we will write $[XK]$ for the closed subspace of $K$ generated by the linear span of $x \xi$ for all $x \in X$ and $\xi \in K$.
If a space $A$ acts on a normed space $X$ then we write $\ncl{AX}$ for the closed subspace of $X$ generated by the linear span of $a x$ for all $a \in A$ and $x \in X$.
The reader is addressed to \cite{BleLeM04, EffRua00, Pau02, Pis03} for the pertinent details on operator spaces.

An \emph{operator space} $X$ is a norm closed subspace of $\B(K_1, K_2)$.
As such it inherits a matricial norm structure from $\B(K_2 \oplus K_1)$.
If $K_1 = K_2$ and $I_{K_1} \in X$ then the operator space $X$ is called \emph{unital}.
For this paper the morphisms of the operator spaces are the completely contractive linear maps; isomorphisms are then the completely isometric surjective maps.

Similarly an \emph{operator algebra} is a subalgebra of some $\B(K)$ and an \emph{operator system} is a unital selfadjoint subspace of some $\B(K)$.
The morphisms are respectively the completely contractive algebraic homomorphisms and the unital completely positive maps.
The term \emph{c.a.i.} stands for a contractive approximate identity of an operator algebra.

A completely contractive map $\phi \colon X \to \B(K_1, K_2)$ will be called \emph{non-degenerate} if $K_2 = [\phi(X) K_1]$ and $K_1 = [\phi(X)^* K_2]$.
If $\phi \colon X \to B(K_1, K_2)$ is degenerate then we may pass to the non-degenerate completely contractive map $\phi' := P_{K_2'} \phi(\cdot) |_{K_1'}$ for $K_2' = [\phi(X) K_1]$ and $K_1' = [\phi(X)^* K_2]$; then
\[
\phi(x) = \begin{bmatrix} \phi'(x) & 0 \\ 0 & 0 \end{bmatrix} \foral x \in X.
\]
Hence $\phi$ is completely isometric if and only if $\phi'$ is completely isometric.

\subsection{Operator bimodules}

We will require some notation about the class of bimodules.
Let $A$ and $B$ be operator algebras with c.a.i.'s.
We say that an operator space $X$ is an \emph{operator $A$-$B$-bimodule} if there exist completely contractive bilinear maps $A\times X\rightarrow X$ and $X\times B\rightarrow X$.
In this case we write ${}_A X_B$.
A bimodule ${}_A X_B$ will be called \emph{non-degenerate} if both $A$ and $B$ act non-degenerately on $X$, i.e. $\ncl{AX} = X$ and $\ncl{XB} = X$.
If in particular $A$ and $B$ are C*-algebras then we will say that ${}_A X_B$ is a \emph{C*-bimodule}.
We will be mainly interested in non-degenerate C*-bimodules.

The morphisms $(\pi, \phi, \si) \colon {}_A X_B \to \B(K_1, K_2)$ of C*-bimodules consist of bimodule maps such that $\phi \colon X \to \B(K_1, K_2)$ is a completely contractive map, and $\pi \colon A \to \B(K_2)$ and $\si \colon B \to \B(K_1)$ are $*$-representations.
A bimodule map $(\pi, \phi, \si)$ will be called \emph{non-degenerate} if $\pi$, $\phi$ and $\si$ are non-degenerate.
Two bimodule maps $(\pi,\phi,\si) \colon {}_A X_B \to \B(K_1, K_2)$ and $(\pi',\phi',\si') \colon {}_A X_B \to \B(K_1', K_2')$ are called \emph{unitarily equivalent} if there are unitaries $V \in \B(K_1, K_1')$ and $U \in \B(K_2, K_2')$ such that
\[
(\pi',\phi',\si') = (U \pi(\cdot) U^*, U \phi(\cdot) V^*, V \si(\cdot) V^*).
\]
For every non-degenerate C*-bimodule ${}_A X_B$ there exist a complete isometric map $\phi \colon X \rightarrow B(K_1, K_2)$, and faithful representations $\pi \colon A \rightarrow B(K_2)$ and $\sigma \colon B \rightarrow B(K_1)$ such that
\[
\phi(axb)= \pi (a)\phi (x)\sigma (b) \foral  a\in A, x\in X, b \in B;
\]
see for example \cite[Theorem 3.3.1]{BleLeM04}.
Every such triple $(\pi, \phi ,\sigma )$ is called \emph{a faithful CES representation of ${}_A X_B$}.
We will use the following observations concerning non-degenerate bimodule maps.

\begin{lemma}\label{L: nd iff}
Let ${}_A X_B$ be a non-degenerate C*-bimodule.
Then a completely contractive bimodule map $(\pi, \phi, \si) \colon {}_A X_B \to \B(K_1, K_2)$ is non-degenerate if and only if $\phi \colon X \to \B(K_1, K_2)$ is non-degenerate.
\end{lemma}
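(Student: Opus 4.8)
The forward direction holds by definition, since a non-degenerate bimodule map has $\phi$ non-degenerate among its defining requirements. Thus the entire content lies in the converse: assuming $K_2 = [\phi(X)K_1]$ and $K_1 = [\phi(X)^* K_2]$, I must show that the $*$-representations $\pi$ and $\si$ are non-degenerate, i.e. $[\pi(A)K_2] = K_2$ and $[\si(B)K_1] = K_1$.

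My plan is to argue purely algebraically, transporting the non-degeneracy of $\phi$ through the bimodule identities $\phi(ax) = \pi(a)\phi(x)$ and $\phi(xb) = \phi(x)\si(b)$, rather than through approximate identities. For $\pi$, the key observation is that $\spn\{\pi(a)\phi(x)\} = \phi(\spn(AX))$ by linearity and the left identity; combined with the continuity of $\phi$ and the bimodule non-degeneracy $\ncl{AX} = X$, this yields the chain
\[
K_2 = [\phi(X)K_1] = [\phi(\spn(AX))K_1] = [\pi(A)\phi(X)K_1] \subseteq [\pi(A)K_2] \subseteq K_2,
\]
whence $[\pi(A)K_2] = K_2$. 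Here the second equality is exactly where $\ncl{AX} = X$ enters (together with the elementary fact that passing to the closure of a spanning set does not change the generated subspace), while the first inclusion uses only $\phi(X)K_1 \subseteq K_2$.

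For $\si$ I would run the mirror-image argument on the adjoint side, starting from $K_1 = [\phi(X)^* K_2]$ and $\ncl{XB} = X$. The point that genuinely requires the C*-structure is the identity $\si(b)\phi(x)^* = \phi(xb^*)^*$, which follows from $\si(b)^* = \si(b^*)$; since $b^*$ ranges over $B$ as $b$ does, one gets $\spn\{\si(b)\phi(x)^*\} = \phi(\spn(XB))^*$, and the same chain of equalities and inclusions delivers $[\si(B)K_1] = K_1$. This is the only delicate step, and it is the expected obstacle: it is where one must use that $B$ is a C*-algebra and $\si$ a $*$-representation, so that $\si(B)$ is self-adjoint and the vectors $\phi(x)^*\eta$ spanning $K_1$ are recovered inside $[\si(B)K_1]$. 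Everything else is formal bookkeeping with closed linear spans. (Alternatively one could fix c.a.i.'s $(e_\la)$ of $A$ and $(f_\mu)$ of $B$, note that $\ncl{AX}=X$ forces $e_\la x \to x$ and $\ncl{XB}=X$ forces $x f_\mu \to x$, and then pass to the limit in $\pi(e_\la)\phi(x)=\phi(e_\la x)$ and $\phi(x)\si(f_\mu)=\phi(xf_\mu)$; but the span argument above seems cleaner.)
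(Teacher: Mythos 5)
Your proposal is correct and follows essentially the same route as the paper: the paper's proof is exactly the chain $[\pi(A)K_2] = [\pi(A)\phi(X)K_1] = [\phi(AX)K_1] = [\phi(X)K_1] = K_2$ using $\ncl{AX}=X$, with the case of $\si$ dismissed as ``a symmetrical computation''. Your explicit treatment of the adjoint step $\si(b)\phi(x)^* = \phi(xb^*)^*$ merely fills in the detail the paper leaves implicit there.
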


\begin{proof}
The ``only-if'' part is trivial.
For the ``if'' part we have that
\[
[\pi(A) K_2] = [ \pi(A) \phi(X) K_1 ] = [\phi(AX) K_1] = [\phi(X) K_1] = K_2,
\]
since $\ncl{AX} = X$.
A symmetrical computation applies for $\si$.
\end{proof}

We will use the following construction in the case of non-degenerate C*-bimodules to pass to non-degenerate completely contractive bimodule maps.
Let ${}_A X_B$ be a C*-bimodule and fix a completely contractive bimodule map $(\pi,\phi,\si) \colon {}_A X_B \to \B(K_1, K_2)$.
Let $K_2' = [\phi(X) K_1]$ and $K_1' = [\phi(X)^* K_2]$ and define the compression $\phi'(\cdot) = P_{K_2'} \phi(\cdot) |_{K_1'}$.
A direct computation shows that $K_1'$ is reducing for $\pi$ and that $K_2'$ is reducing for $\si$.
Thus we may define the non-degenerate sub-representations $\pi' = \pi|_{K_2'}$ and $\si'= \si|_{K_1'}$ of $A$ and $B$, respectively.
Then Lemma \ref{L: nd iff} implies that the completely contractive bimodule map $(\pi', \phi', \si') \colon {}_A X_B \to \B(K_1', K_2')$ is non-degenerate.

\begin{proposition}\label{P: nd compression}
Let ${}_A X_B$ be a non-degenerate C*-bimodule.
Fix a completely contractive bimodule map $(\pi, \phi, \si) \colon {}_A X_B \to \B(K_1, K_2)$ and let the non-degenerate compression $(\pi',\phi', \si') \colon {}_A X_B \to \B(K_1',K_2')$ as constructed above.
Then $\phi$ is a complete isometric map if and only if $\phi'$ is a complete isometric map.
If, in addition, $A$ and $B$ act faithfully on $X$ then $(\pi, \phi, \si)$ is a faithful CES representation if and only if $(\pi', \phi', \si')$ is a faithful CES representation.
\end{proposition}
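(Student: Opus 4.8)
The plan is to reduce both assertions to the block-matrix description of $\phi$ with respect to the orthogonal decompositions $K_2 = K_2' \oplus (K_2')^\perp$ and $K_1 = K_1' \oplus (K_1')^\perp$, in the same spirit as the passage from $\phi$ to $\phi'$ already recorded in the Preliminaries for a single operator space map. Concretely, I would first verify the block form
\[
\phi(x) = \begin{bmatrix} \phi'(x) & 0 \\ 0 & 0 \end{bmatrix} \foral x \in X.
\]
The lower row vanishes because $\phi(x) K_1 \subseteq [\phi(X) K_1] = K_2'$, so the range of $\phi(x)$ is contained in $K_2'$. The right-hand column vanishes because $\phi(x)|_{(K_1')^\perp} = 0$: if $\eta \perp K_1' = [\phi(X)^* K_2]$, then $\langle \phi(y) \eta, \zeta \rangle = \langle \eta, \phi(y)^* \zeta \rangle = 0$ for all $y \in X$ and $\zeta \in K_2$, so $\phi(y) \eta = 0$ for every $y$, and in particular for $y = x$. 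Granting the block form, for each $n$ and each $[x_{ij}] \in M_n(X)$ the operator $[\phi(x_{ij})]$ is, after a reshuffle of coordinates, the orthogonal direct sum of $[\phi'(x_{ij})]$ and a zero operator, hence the two have the same norm. Thus $\phi$ and $\phi'$ agree completely isometrically on all matrix levels, which proves the first assertion.

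For the second assertion I would first note that the covariance identities for $(\pi', \phi', \si')$ are guaranteed by its construction as a completely contractive bimodule map, so that the only outstanding points are the complete isometry of $\phi'$---which is exactly the first assertion---and the faithfulness of $\pi'$ and $\si'$. One implication is formal: as $\pi' = \pi|_{K_2'}$ and $\si' = \si|_{K_1'}$ are restrictions to reducing subspaces, we have $\ker \pi \subseteq \ker \pi'$ and $\ker \si \subseteq \ker \si'$, so faithfulness of $\pi'$ and $\si'$ forces that of $\pi$ and $\si$; combined with the first assertion this shows that if $(\pi', \phi', \si')$ is a faithful CES representation then so is $(\pi, \phi, \si)$.

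The reverse implication is where the faithful-action hypothesis enters, and I expect it to be the main obstacle, since faithfulness does not pass to a sub-representation by abstract reasoning alone. Assuming $(\pi, \phi, \si)$ is a faithful CES representation, $\phi$ is in particular injective. If $\pi'(a) = 0$ then $\pi(a) \phi(x) \xi = 0$ for all $x \in X$ and $\xi \in K_1$, and the left-module identity $\pi(a) \phi(x) = \phi(ax)$ turns this into $\phi(ax) = 0$ for all $x$; injectivity of $\phi$ then gives $a x = 0$ for all $x \in X$, and the faithfulness of the left action of $A$ on $X$ forces $a = 0$, so $\pi'$ is faithful. The symmetric argument, using the right-module identity $\phi(x) \si(b) = \phi(xb)$ and the faithfulness of the right action of $B$, shows $\si'$ is faithful. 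Together with the complete isometry of $\phi'$ this completes the reverse implication, and hence the proposition. The crux is thus to recover faithfulness of the compressed representations from the injectivity of $\phi$ and the standing faithful-action hypothesis, rather than from faithfulness of $\pi$ and $\si$ themselves.
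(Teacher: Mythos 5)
Your argument is correct and follows essentially the same route as the paper: the first assertion via the block form $\phi = \phi' \oplus 0$ (which the paper simply cites from its Preliminaries rather than re-deriving), and the second via the observation that $\pi'(a)=0$ forces $\phi(ax)=\pi(a)\phi(x)=0$, whence $a=0$ by injectivity of $\phi$ and faithfulness of the action, with the converse being formal since $\ker\pi\subseteq\ker\pi'$. No gaps.
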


\begin{proof}
We have already mentioned that the non-degenerate compression $\phi'$ is completely isometric if and only if so is $\phi$.
For the second part suppose that $(\pi, \phi, \si)$ is a faithful CES representation.
If $\pi'(a) = 0$ then $\phi'(a x) = \pi'(a) \phi'(x) = 0$ for all $x \in X$.
Thus we get that $a = 0$ since $\phi'$ is injective and $A$ acts faithfully on $X$.
Similarly we obtain that $\si'$ is faithful.
The converse implication is trivial.
\end{proof}

\begin{remark}\label{R: faithfulness}
Faithfulness of the action in the second part of Proposition \ref{P: nd compression} is necessary.
Indeed, there are non-degenerate bimodule maps $(\pi, \phi, \si) \colon {}_A X_B \to \B(K_1, K_2)$ with $\phi$ being a complete isometry but $\pi$ and $\si$ are not faithful.
For such an example let $X = \bC$ and $A = B = \bC^2$ such that
\[
(a_1, a_2) \cdot \xi \cdot (b_1, b_2) = a_1 \xi b_1.
\]
Then we have that $\ker \pi  = \ker \si = \{(0,c) \mid c \in \bC\}$ for the representations $\pi(a_1, a_2) = a_1$, $\si(b_1, b_2) = b_1$ and $\phi(\xi) = \xi$.
\end{remark}

\subsection{Ternary rings of operators}

An operator space $M$ is called a \emph{ternary ring of operators (TRO)} if $MM^*M \subseteq M$.
It then follows that $M$ is an $A$-$B$-equivalence bimodule in the sense of Rieffel \cite{Rie74} for the C*-algebras
\[
A = \ncl{MM^*} \qand B = \ncl{M^*M}.
\]
Indeed, the C*-algebras $A$ and $B$ act non-degenerately and faithfully on $M$.
Consequently the C*-algebras $A$ and $B$ attain c.a.i.'s which endow $M$ with a left and a right c.a.i., respectively.
Thus a TRO $M$ satisfies $\ncl{MM^*M} = M$.
In particular, there are two nets $(a_t)$ and $(b_\la)$ where
\begin{align*}
a_t =
\sum_{i=1}^{l_t} m_i^t (m_i^t)^*,
\qand
b_\lambda =
\sum_{i=1}^{k_\lambda } (n_i^\lambda)^* n_i^\lambda
\end{align*}
for some $m_i^t, n_j^\lambda \in M$ such that $[m_1^t, m_2^t, \dots, m_{l_t}^t]$ and $[(n_1^\la)^*, (n_2^\la)^*, \dots, (n_{k_\la}^\la)^*]$ are contractions, and
\[
\lim _t a_t m = m \qand \lim_\lambda m b_\lambda = m
\]
for all $m\in M$; see for example the proof of \cite[Theorem 6.1]{BMP00}.
In the case where $A$ and $B$ admit countable approximate identities then the corresponding nets can be replaced by sequences
\[
a_k = \sum_{i=1}^k m_i m_i^*
\qand
b_k = \sum_{i=1}^k n_i^* n_i.
\]
This follows a standard trick for TRO's; see for example \cite[Lemma 2.3]{Bro77}.

It is a standard fact that a $*$-representation $\si \colon B \to \B(K)$ induces a completely contractive map $t \colon M \to \B(K, M \otimes_\si K)$ by left creation operators, i.e. $t(m) \xi = m \otimes \xi$, where $M \otimes_\si K$ is the Hilbert space with
\[
\sca{m \otimes \xi, n \otimes \eta} = \sca{\xi, \si(m^*n) \eta} \foral m,n \in M, \xi, \eta \in K.
\]
Consequently the mapping $\pi(a) m \otimes \xi = (am) \otimes \xi$ defines a $*$-representation $\pi \colon A \to  \B(M \otimes_\si K)$.
In particular $t$ is a TRO morphism of $M$ with $t(m) t(n)^* = \pi(m n^*)$ and $t(m)^* t(n) = \si(m^*n)$ for all $m,n \in M$.

We fix notation for TRO envelopes.
We will use this terminology instead of that of the injective triple extension.
For full details see \cite[Section 8.3]{BleLeM04}.
For a complete isometric map $i \colon X \to Y$ into a TRO $Y$, let $\T(i(X))$ be the TRO spanned by
\[
i(x_1) i(x_2)^* i(x_3) i(x_4)^* \cdots i(x_{2n})^* i(x_{2n+1}) \text{ for } n\geq 0 \text{ and } x_1,\dots, x_{2n+1} \in X,
\]
and their limits.
We say that $(\T(i(X)),i)$ is \emph{a TRO extension of $X$}.
There is a particular embedding of $X$ in the injective envelope $\I(\S(X))$ of an operator system $\S(X)$.
If $X$ is unital then we set $\S(X) = X + X^*$, and when $X$ is not unital we set
\[
\S(X) = \{ \begin{bmatrix} \la & x_1 \\ x_2 & \mu \end{bmatrix} \mid x_1, x_2 \in X, \la, \mu \in \bC\},
\]
i.e. \emph{the Paulsen system}.
The injective envelope $\I(X)$ of $X$ is then the corner $\I_{12}(\S(X))$, and in particular $\I(X + X^*)$ when $X$ is unital.
The TRO extension of $X$ generated in $\I(\S(X))$ will be denoted by $\tenv(X)$.
In particular $\tenv(X)$ carries the following universal property, due to Hamana \cite{Ham99}:
given any TRO extension $(Z,j)$ of $X$ there exists a necessarily unique and surjective triple morphism $\theta \colon Z \to \tenv(X)$ such that $\theta(j(x)) = x$.
The TRO space $\tenv(X)$ is called \emph{the TRO envelope of $X$}.
If in addition $X$ is a C*-bimodule over $A$ and $B$ then so is every TRO extension $(Z,j)$ of $X$, and the TRO morphism $\theta \colon Z \to \tenv(X)$ is a bimodule map over $A$ and $B$.

If $X$ is unital then the embedding $X \hookrightarrow \I(X)$ is unital and the Choi-Effros construction endows $\I(X)$ with a C*-algebraic structure.
Then the TRO envelope is a C*-algebra and is denoted by $\cenv(X)$.
The existence of the C*-envelope is again due to Hamana \cite{Ham79} and follows the program established by Arveson \cite{Arv69} (see also \cite{KakPet14} for another interpretation).
For an alternative proof of Hamana's Theorem \cite{Ham79} by using boundary subsystems the reader is directed to the work of the second author \cite{Kak11-2}.
In fact the arguments of \cite{Kak11-2} suffice to show the existence of the TRO envelope as well; even though this is not mentioned therein.

\subsection{Multipliers of operator spaces}

We require notation about multipliers on operator spaces.
For full details see \cite[Section 4.5]{BleLeM04}.
Fix an operator space $X$.
Recall the decomposition
\[
X \hookrightarrow \S(X) \hookrightarrow \I(\S(X)) = \begin{bmatrix} \I_{11}(X) & \I(X) \\ \I(X)^* & \I_{22}(X) \end{bmatrix}.
\]
We write $\M_l(X)$ for \emph{the operator space of left multipliers of $X$}.
Then $\M_l(X)$ is completely isometrically isomorphic as an operator algebra to
\[
IM_l(X) := \{s \in \I_{11}(X) \mid sX \subseteq X\}.
\]
In particular if $X \subseteq \B(K)$ is an operator algebra with c.a.i. then $\M_l(X)$ is completely isometrically isomorphic as an operator algebra to $\{s \in \B(K) \mid s X \subseteq X\}$.
The diagonal of $\M_l(X)$ is denoted by $\A_l(X)$ and is a unital C*-algebra.
In particular $\A_l(X)$ is $*$-isomorphic to the C*-algebra
\[
\{s \in \I_{11}(X) \mid sX \subseteq X \text{ and } s^*X \subseteq X \}.
\]
Similar facts hold for the operator space $\M_r(X)$ of the right multipliers and its diagonal $\A_r(X)$.
The maps
\[
\A_l(X) \times X \to X : (l,x) \mapsto lx, \qand X \times \A_r(X) \to X : (x,r) \mapsto xr
\]
are completely contractive and $X$ is an $\A_l(X)$-$\A_r(X)$-bimodule.
In particular the C*-bimodule ${}_{\A_l(X)} X_{\A_r(X)}$ is non-degenerate and $\A_l(X)$ and $\A_r(X)$ act faithfully on $X$.

Let $X$ and $Y$ be operator spaces.
An \emph{oplication of $Y$ on $X$} is a completely contractive bilinear map $m \colon Y \times X \to X$ such that there is a net $(e_i)$ of contractions in $Y$ for which $\lim_i m(e_i, x) = x$ for all $x \in X$.
The term is a short for ``operator multiplication''.
It is not immediate, but given an oplication of $Y$ on $X$ there exists a (necessarily unique) completely contractive map $\theta \colon Y \to \M_l(X)$ such that $\theta(y)(x) = m(y, x)$ for all $y\in Y, x \in X$.
If $e_i = e$ for all $i$ then this map sends $e$ to $I_X$.
In particular if $Y$ is an algebra (resp. C*-algebra) then $\theta$ is a homomorphism (resp. $*$-homomorphism into $\A_l(X)$).
Similar arguments follow for the right version of oplications.
The reader is addressed to \cite[Section 4.6.1]{BleLeM04} for the required details.

\section{Strong $\Delta$-Equivalence} \label{S: Delta}

\subsection{Strong TRO equivalence for operator spaces}\label{Ss: TRO}

Let us give the concrete version of the Morita context we are going to study.

\begin{definition}\label{D: TRO op sp}
Let $X\subseteq B(K_1, K_2)$ and $Y\subseteq B(H_1, H_2)$ be operator spaces.
We say that $X$ and $Y$ are \emph{strongly TRO equivalent} if there exist TRO's $M_1\subseteq B(H_1, K_1)$ and $M_2 \subseteq \B(H_2, K_2)$ such that
\[
X= \ncl{M_2YM_1^*} \qand Y=\ncl{M_2^*XM_1}.
\]
\end{definition}

It is immediate that strong TRO equivalence involves non-degenerate C*-bimodules.
Indeed if $X$ and $Y$ are strongly TRO equivalent by $M_1$ and $M_2$ then $X$ is an $A$-$B$-bimodule for the C*-algebras
\[
A = \ncl{M_2M_2^*}, \, B = \ncl{M_1M_1^*},
\]
and $Y$ is a $C$-$D$-bimodule for the C*-algebras
\[
C = \ncl{M_2^*M_2}, \, D = \ncl{M_1^*M_1}.
\]
It is immediate that
\[
\ncl{A X} = X = \ncl{X B} \qand \ncl{C Y} = Y = \ncl{Y D}.
\]
Therefore ${}_A X_B$ and ${}_C Y_D$ are non-degenerate as C*-bimodules.
Moreover if $X$ acts non-degenerately then $A$ and $B$ act faithfully on $X$.
Indeed if $ax = 0$ for all $x \in X$ then $a \xi = 0$ for all $\xi \in K_2 = [X K_1]$.
A similar argument holds for $B$ by using adjoints.
Let us give some examples of strongly TRO equivalent operator spaces.

\begin{example}\label{E: sMe}
Strong TRO equivalence generalises the usual strong Morita equivalence in the case of C*-algebras.
Indeed if $M$ is an equivalence bimodule over the C*-algebras $X$ and $Y$, then $X = \ncl{MM^*}$ and $Y = \ncl{M^*M}$ are strongly TRO equivalent by $M_1 = M_2 = M^*$.
\end{example}

\begin{example}\label{E: trivial}
We can easily construct strongly TRO equivalent operator spaces.
Take an arbitrary operator space $X_0\subseteq B(K_1, K_2)$ and TRO's $M_i\subseteq B(H_i, K_i)$ for $i=1,2$.
Then the operator spaces
\[
X= \ncl{M_2M_2^*X_0M_1M_1^*} \qand Y = \ncl{M_2^*X_0M_1}
\]
are automatically strongly TRO equivalent.
\end{example}

\begin{example}\label{E: compacts}
We write $\K(X)$ for the operator subspace of the compact operators that the operator space $X\subseteq B(K_1, K_2)$ contains.
Moreover $\F(X)$ (resp. $\R(X)$) denotes the operator subspace of the finite rank operators (resp. rank $1$ operators) that $X$ contains.

Let $X, Y$ be weakly TRO equivalent dual operator spaces in the sense of \cite{EPT10}, which act non-degenerately on Hilbert spaces.
That is, there exist TRO's $M_1$ and $M_2$  such that $X = \wcl{M_2 Y M_1^*}$ and $Y = \wcl{M_2^* X M_1}$.
Then $\K(X)$ and $\K(Y)$ are strongly TRO equivalent.
Similarly $\F(X)$ (resp. $\R(X)$) and $\F(Y)$ (resp. $\R(Y)$) are strongly TRO equivalent.

We will only show the case of $\K(X)$ and $\K(Y)$.
Analogous arguments settle the other cases.
Since $M_2^* X M_1 \subseteq Y$ then we get $M_2^* \K(X) M_1\subseteq \K(Y)$.
Similarly  we have that $M_2 \K(Y) M_1^*\subseteq \K(X)$ and thus we obtain
\[
M_2^*M_2 \K(Y) M_1^*M_1 \, \subseteq \, M_2^* \K(X) M_1.
\]
Let $(e_i)$ be a c.a.i. of $\ncl{M_2^*M_2}$.
Since $M_2$ acts non-degenerately, then $(e_i)$ converges in the strong operator
topology to the identity operator of the algebra $\wcl{M_2^* M_2}$.
We conclude that $k = \lim_i e_i k$ for every $k \in \K(Y)$.
Similarly we get that $k = \lim_i  k f_i $, for all $k \in \K(Y)$, for a c.a.i. $(f_i)$ of $\ncl{M_1^*M_1}$.
Therefore we get that
\[
\K(Y) \subseteq \ncl{M_2^*M_2 \K(Y) M_1^*M_1} \subseteq \ncl{M_2^* \K(X) M_1},
\]
and thus $\K(Y) = \ncl{M_2^* \K(X) M_1}$; likewise $\K(X) = \ncl{M_2 \K(Y) M_1^*}$.
\end{example}

\begin{example}\label{E: lattices}
Let $\L_1$ and $\L_2$ be reflexive lattices, and let $A=\Alg(\L_1)$ and $B=\Alg(\L_2) $ be the corresponding algebras; see \cite{Dav88} for the pertinent definitions.
If $\theta \colon \L_1^{\prime \prime} \rightarrow \L_2^{\prime \prime}$ is a $*$-isomorphism such that $\theta (\L_1)=\L_2$, then
the algebras $A$ and $B$ are weakly TRO equivalent in the sense of \cite{Ele12}.
Therefore by Example \ref{E: compacts} we get that $\K(A)$ and $\K(B)$ are strongly TRO equivalent.
The same holds for $\F(A)$ and $\F(B)$, as well as for $\R(A)$ and $\R(B)$.
\end{example}

Let us continue with the main results of this subsection.
The next theorem follows a similar reasoning as in \cite[Theorem 2.1]{Ele14}.

\begin{theorem}\label{T: eq rel}
Strong TRO equivalence of operator spaces is an equivalence relation.
\end{theorem}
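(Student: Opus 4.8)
The plan is to verify the three defining properties of an equivalence relation in turn, following the template of \cite[Theorem 2.1]{Ele14}. Reflexivity and symmetry are the routine steps. For reflexivity of $X \subseteq \B(K_1,K_2)$ I would take the scalar TRO's $M_1 = \bC \, I_{K_1}$ and $M_2 = \bC \, I_{K_2}$, for which $\ncl{M_2 X M_1^*} = X = \ncl{M_2^* X M_1}$ holds trivially. For symmetry, if $X$ and $Y$ are strongly TRO equivalent via $M_1, M_2$, then I would implement the equivalence of $Y$ and $X$ by the adjoint TRO's $N_i = M_i^*$ (the adjoint of a TRO is again a TRO, since $MM^*M \subseteq M$ gives $M^*MM^* \subseteq M^*$): the two identities $\ncl{N_2 X N_1^*} = Y$ and $\ncl{N_2^* Y N_1} = X$ are exactly the two defining identities of the equivalence of $X$ and $Y$ read backwards.

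Transitivity is the substance. Suppose $X$ and $Y$ are equivalent via $M_1, M_2$ and $Y$ and $Z$ are equivalent via $L_1, L_2$. The natural candidates for the linking objects are the products $M_i L_i$: substituting $Y = \ncl{L_2 Z L_1^*}$ into $X = \ncl{M_2 Y M_1^*}$ yields $X = \ncl{(M_2 L_2) \, Z \, (M_1 L_1)^*}$, and dually $Z = \ncl{(M_2 L_2)^* X (M_1 L_1)}$. The difficulty, which I expect to be the main obstacle, is that $\ncl{M_i L_i}$ need not itself be a TRO: verifying the triple identity for $\ncl{M_2 L_2}$ runs into the expression $L_2 L_2^* M_2^* M_2 L_2$, in which the C*-algebras $A' := \ncl{L_2 L_2^*}$ and $C := \ncl{M_2^* M_2}$ both act on $H_2$ but do not commute, so the inner factor cannot be absorbed back into $L_2$. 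I therefore would not take $\ncl{M_i L_i}$ but instead let $N_i$ be the \emph{TRO generated by} $M_i L_i$ inside the ambient $\B(G_i, K_i)$; these are TRO's by construction, and $M_i L_i \subseteq N_i$ gives the inclusions $X \subseteq \ncl{N_2 Z N_1^*}$ and $Z \subseteq \ncl{N_2^* X N_1}$ from the substitutions above.

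It then remains to show the reverse inclusions, i.e. that passing from $M_i L_i$ to the generated TRO does not enlarge these spaces beyond $X$ and $Z$. The tool is the non-degeneracy of all the bimodule actions involved together with two absorption identities. First, the core reduction: for $m \in M_2$, $l \in L_2$, $z \in Z$, $n \in M_1$, $r \in L_1$ one has $(ml)\, z\, (nr)^* = m(l z r^*) n^* \in \ncl{M_2 Y M_1^*} = X$, since $l z r^* \in \ncl{L_2 Z L_1^*} = Y$. Second, left absorption: $\ncl{(M_2 L_2)(M_2 L_2)^* X} \subseteq X$, which I would obtain by inserting $X = \ncl{X M_1 M_1^*}$, collapsing $\ncl{M_2^* X M_1} = Y$, and then using $\ncl{A' Y} = Y$. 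The right absorption $\ncl{X (M_1 L_1)(M_1 L_1)^*} \subseteq X$ is symmetric, using $\ncl{M_2 M_2^* X} = X$ and $\ncl{Y B'} = Y$ with $B' = \ncl{L_1 L_1^*}$.

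Finally I would assemble these. A typical generator of $N_2 Z N_1^*$ factors as a left tail of pairs lying in $(M_2 L_2)(M_2 L_2)^*$, times a core element of the form $(ml) z (nr)^* \in X$, times a right tail of pairs lying in $(M_1 L_1)(M_1 L_1)^*$; peeling the pairs off one at a time via the two absorption identities keeps the whole product inside $X$, and a continuity argument gives $\ncl{N_2 Z N_1^*} \subseteq X$, hence equality. The identity $\ncl{N_2^* X N_1} = Z$ follows by the mirror computation, using the relations $\ncl{M_2^* X M_1} = Y$ and $\ncl{L_2^* Y L_1} = Z$ together with the non-degeneracy of $C = \ncl{M_2^* M_2}$ and $C' = \ncl{L_2^* L_2}$. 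The crux throughout is the failure of $\ncl{M_i L_i}$ to be a TRO; replacing it by the generated TRO and then showing it is controlled by the absorption identities is precisely what makes transitivity work.
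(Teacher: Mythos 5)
Your proof is correct and takes essentially the same route as the paper: reflexivity via the scalar TRO's, symmetry via adjoints, and for transitivity the key observation that the product of the two linking TRO's need not itself be a TRO, so one enlarges it to one and then uses non-degeneracy of the intermediate C*-algebra actions on the middle space $Y$ to verify the two defining identities. The only (cosmetic) difference is the choice of enlargement: in the paper's notation the linking TRO is $L_i := \ncl{M_i D_i N_i}$ with $D_i = \ca(\{M_i^*M_i \cup N_i N_i^*\})$, which absorbs in one step via $\ncl{D_2 Y} = Y = \ncl{Y D_1}$, whereas you take the TRO generated by the product and peel off the $(M_iL_i)(M_iL_i)^*$-pairs inductively --- a minor variant of the same absorption argument.
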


\begin{proof}
For reflexivity, observe that $X \subseteq \B(K_1, K_2)$ is strongly TRO equivalent to itself by $M_2=\bC I_{K_2}$ and $M_1=\bC I_{K_1}$.
The relation is symmetric as the adjoints of a TRO form a TRO.
For transitivity, fix the operator spaces $X \subseteq B(K_1, K_2)$, $Y \subseteq B(H_1, H_2)$ and $Z \subseteq B(W_1, W_2)$ such that
\[
X = \ncl{M_2 Y M_1^*}, \, Y = \ncl{M_2^* X M_1} = \ncl{N_2 Z N_1^*}, \, Z = \ncl{N_2^* Y N_1}
\]
for some TRO's $M_i$ and $N_i$ for $i = 1,2$.
Then it can be shown that $X = \ncl{L_2 Z L_1^*}$ and $Z = \ncl{L_2^* X L_1}$ for the TRO's
\[
L_1 := \ncl{M_1 D_1 N_1} \qand L_2 := \ncl{M_2 D_2 N_2}
\] 
where $D_i : = \ca(\{M_i^*M_i \cup N_i N_i^*\})$ for $i=1,2$.
The proof follows a similar reasoning to \cite[Theorem 2.1]{Ele14}, by using that $\ncl{D_2 Y} = \ncl{Y D_1} = Y$, and it is left to the reader.
\end{proof}

For the next result recall the notion of unitary equivalence between completely contractive maps of C*-bimodules.
The proof uses the equivalence of representations between strong Morita equivalent C*-algebras \cite{Rie74}.

\begin{proposition}\label{P: bij rep}
Let $X$ and $Y$ be strongly TRO equivalent by $M_1$ and $M_2$, and let
\[
A = \ncl{M_2M_2^*}, \, B = \ncl{M_1M_1^*}, \, C = \ncl{M_2^*M_2}, \, D = \ncl{M_1^*M_1}.
\]
Then there is a bijection (up to unitary equivalence) between the non-degene\-ra\-te completely contractive bimodule maps of ${}_A X_B$ and of ${}_C Y_D$.
This bijection preserves non-degenerate faithful CES representations.
\end{proposition}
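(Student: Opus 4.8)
The plan is to realise the bijection through the linking C*-algebras of the two underlying Morita equivalences, together with an averaging argument over the canonical approximate identities. First I would record that, since $M_2$ is a TRO with $A=\ncl{M_2M_2^*}$ and $C=\ncl{M_2^*M_2}$, it is an $A$-$C$-equivalence bimodule, so $A$ and $C$ are strongly Morita equivalent in the sense of Rieffel \cite{Rie74}; likewise $M_1$ witnesses strong Morita equivalence of $B$ and $D$. Given a non-degenerate completely contractive bimodule map $(\pi,\phi,\si)\colon {}_AX_B\to\B(L_1,L_2)$ (equivalently $\phi$ non-degenerate, by Lemma \ref{L: nd iff}), I would apply the creation-operator construction of Subsection 2.2 to the TRO $M_2^*$ and the representation $\pi$ of its right C*-algebra $A$. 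This produces a Hilbert space $L_2'=M_2^*\otimes_\pi L_2$, a non-degenerate representation $\pi'\colon C\to\B(L_2')$ of the left C*-algebra, and a TRO representation $s_2\colon M_2\to\B(L_2',L_2)$, $s_2(m):=t_2(m^*)^*$, obeying $s_2(m)s_2(n)^*=\pi(mn^*)$ and $s_2(m)^*s_2(n)=\pi'(m^*n)$. Running the same construction for $M_1^*$ and $\si$ yields $L_1'=M_1^*\otimes_\si L_1$, a representation $\si'\colon D\to\B(L_1')$, and $s_1\colon M_1\to\B(L_1',L_1)$. This is exactly Rieffel induction read through the linking C*-algebras $\big[\begin{smallmatrix}A&M_2\\ M_2^*&C\end{smallmatrix}\big]$ and $\big[\begin{smallmatrix}B&M_1\\ M_1^*&D\end{smallmatrix}\big]$.

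Next I would define $\psi\colon Y\to\B(L_1',L_2')$ on the dense subspace $M_2^*XM_1$ by $\psi(m_2^*xm_1)=s_2(m_2)^*\phi(x)s_1(m_1)$. The crux of the argument, and the step I expect to be the main obstacle, is to show that this is well defined and completely contractive, since a single element of $Y$ admits many representations as such a sum. I would circumvent the ambiguity by averaging over the canonical approximate identities: writing $c_s=\sum_i(m_i^s)^*m_i^s$ and $d_\mu=\sum_j(n_j^\mu)^*n_j^\mu$ for the approximate identities of $C$ and $D$ built from $M_2$ and $M_1$, I set, for each $y\in Y$,
\[
\psi_{s,\mu}(y)=\sum_{i,j} s_2(m_i^s)^*\,\phi\big(m_i^s\,y\,(n_j^\mu)^*\big)\,s_1(n_j^\mu),
\]
which now depends only on the element $y$, because $m_i^s\,y\,(n_j^\mu)^*\in M_2YM_1^*\subseteq X$. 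Factoring $\psi_{s,\mu}(y)$ as a row times the matrix $[\phi(m_i^s y (n_j^\mu)^*)]$ times a column, and using $\|[s_2(m_i^s)^*]\|^2=\|\pi'(c_s)\|\le1$, $\|[s_1(n_j^\mu)]\|^2=\|\si'(d_\mu)\|\le1$, complete contractivity of $\phi$, and $\|[m_i^s y (n_j^\mu)^*]\|\le\|y\|$ (a column contraction times $y$ times a row contraction), one obtains $\|\psi_{s,\mu}\|_{\mathrm{cb}}\le1$, uniformly in $s,\mu$. A direct computation using the bimodule identity for $\phi$ and the relations for $s_1,s_2$ then collapses the sums to $\psi_{s,\mu}(m_2^*xm_1)=\pi'(c_s)\,s_2(m_2)^*\phi(x)s_1(m_1)\,\si'(d_\mu)$, which converges strongly to $s_2(m_2)^*\phi(x)s_1(m_1)$ because $\pi'(c_s),\si'(d_\mu)\to I$ strongly. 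Uniform complete contractivity lets me pass to the strong limit and define $\psi$ on all of $Y$; the limiting expression is manifestly independent of the chosen representation, so $\psi$ is a well-defined completely contractive map with the prescribed action on $M_2^*XM_1$.

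Finally I would deduce the remaining assertions. The relations $s_2(m)^*s_2(n)=\pi'(m^*n)$ and $s_1(m)^*s_1(n)=\si'(m^*n)$ give $\psi(c\,y\,d)=\pi'(c)\psi(y)\si'(d)$ for $c\in C$, $d\in D$ straight from the formula, so $(\pi',\psi,\si')$ is a completely contractive bimodule map of ${}_CY_D$; since $\pi',\si'$ are non-degenerate and $Y$ generates $L_1',L_2'$ under the creation operators, Lemma \ref{L: nd iff} makes it non-degenerate. For bijectivity I would run the identical construction in reverse, inducing a representation of $C$ up to one of $A$ along $M_2$ and of $D$ up to $B$ along $M_1$, and check that it is an inverse up to unitary equivalence: the double induction $M_2\otimes_{\pi'}(M_2^*\otimes_\pi L_2)\to L_2$, $m\otimes(n^*\otimes\xi)\mapsto\pi(mn^*)\xi$, is the canonical Rieffel unitary arising from $\ncl{M_2M_2^*}=A$, and it carries the twice-induced triple back to $(\pi,\phi,\si)$ (similarly for $M_1$ and $\si$). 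For the statement on faithful CES representations, faithfulness of $\pi$ and $\si$ passes to $\pi'=\wpi|_C$ and $\si'=\wsi|_D$, since a faithful representation of a full corner of a linking C*-algebra is faithful on the whole algebra; and if $\phi$ is completely isometric then, as the completely contractive reverse construction recovers $\phi$ up to the unitary above, $\psi$ is forced to be completely isometric as well. Hence $(\pi',\psi,\si')$ is a faithful CES representation whenever $(\pi,\phi,\si)$ is, and the correspondence is the asserted bijection.
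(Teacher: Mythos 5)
Your proposal is correct and follows the same overall architecture as the paper's proof: induce representations of $C$ and $D$ on $M_2^*\otimes_\pi L_2$ and $M_1^*\otimes_\si L_1$ via the creation-operator construction of Subsection 2.2, define $\psi$ on $M_2^*XM_1$ by $\psi(m_2^*xm_1)=s_2(m_2)^*\phi(x)s_1(m_1)$, and obtain bijectivity from the canonical unitaries collapsing the double induction. Where you genuinely diverge is at the step you yourself identify as the crux, namely well-definedness and complete contractivity of $\psi$. The paper proves directly that $\|\sum_i t(n_i)^*\phi(x_i)s(m_i)\|\le\|\sum_i n_i^*x_im_i\|$ on finite sums, by factoring the expression through $[\pi(n_in_j^*)]^{1/2}$ and $[\si(m_im_j^*)]^{1/2}$ and invoking the C*-identity together with complete contractivity of $\phi$; this settles well-definedness, contractivity, and (since every inequality in the chain becomes an equality when $\phi$ is completely isometric and $\pi,\si$ are faithful) the complete isometry claim in one stroke. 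You instead average over the approximate units of $C$ and $D$ to get a uniformly completely contractive net $\psi_{s,\mu}$ that is manifestly well defined on all of $Y$, and pass to a BW limit; your norm estimates for the row, the middle matrix, and the column are all correct, and the collapse $\psi_{s,\mu}(m_2^*xm_1)=\pi'(c_s)s_2(m_2)^*\phi(x)s_1(m_1)\si'(d_\mu)$ checks out. This averaging device is legitimate --- it is in fact precisely the technique the paper deploys later in the proof of Theorem \ref{T: tro env} --- but it is longer here and forces you to route the complete isometry of $\psi$ through the round-trip argument rather than reading it off the estimate. One small inaccuracy: Lemma \ref{L: nd iff} derives non-degeneracy of the triple \emph{from} non-degeneracy of the middle map, not the other way around, so you still owe the short computation $[\psi(Y)L_1']=[s_2(M_2)^*\phi(X)s_1(M_1)M_1^*\otimes_\si L_1]=\cdots=L_2'$ (and its adjoint version) that the paper records explicitly; this is routine and does not affect the correctness of your argument.
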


\begin{proof}
Let $(\pi, \phi, \si) \colon {}_A X_B \to \B(K_1, K_2)$ be a non-degenerate completely contractive bimodule map.
Let the induced $*$-representations $\tau \colon D \to \B(H_1)$ and $\rho \colon C \to \B(H_2)$, the induced completely contractive mappings $s \colon M_1 \to \B(H_1, K_1)$ and $t \colon M_2 \to \B(H_2, K_2)$ for $H_1 := M_1^* \otimes_\si K_1$ and $H_2 := M_2^* \otimes_\pi K_2$.
Then $(\si, s, \tau)$ and $(\pi, t, \rho)$ form TRO mappings of $M_1$ and $M_2$ respectively.
The C*-identity yields
\begin{align*}
\| \sum_{i=1}^d t(n_i)^* \phi(x_i) s(m_i) \|
& = \\
& \hspace{-1cm} =
\| [\pi(n_i n_j^*)]^{1/2} \begin{bmatrix} \phi(x_1) & \dots & \phi(x_d) \\ \vdots & \vdots & \vdots \\ 0 & \dots & 0 \end{bmatrix} [\si(m_i m_j^*)]^{1/2} \|
\\
& \hspace{-1cm}\leq
\|  [n_i n_j^*]^{1/2} \begin{bmatrix} x_1 & \dots & x_d \\ \vdots & \vdots & \vdots \\ 0 & \dots & 0 \end{bmatrix} [m_i m_j^*]^{1/2}  \| 
 =
\| \sum_{i=1}^d n_i^* x_i m_i \|
\end{align*}
by using that $\phi$ is completely contractive.
Similar arguments hold for all matrix norms and thus the mapping $\psi \colon Y \to \B(H_1, H_2)$ given by
\begin{align*}
\psi(\sum_{i=1}^d n_i^* x_i m_i) \sum_{j} \xi_j^* \otimes k_j
& = 
\sum_{i,j} n_i^* \otimes \phi(x_i m_i \xi_j^*) k_j \\
& =
\sum_{i=1}^d t(n_i)^* \phi(x_i) s(m_i) \sum_j \xi_j^* \otimes k_j
\end{align*}
extends to a completely contractive mapping.
Non-degeneracy of $\rho$ and $\tau$ is automatic.
Non-degeneracy of $\psi$ follows by
\begin{align*}
[\psi(Y) H_1']
& =
[t(M_2)^* \phi(X) s(M_1) M_1^* \otimes_\si K_1'] 
 =
[t(M_2)^* \phi(X) \si(B) K_1'] \\
& =
[t(M_2)^* \phi(X) K_1']
=
[t(M_2)^* K_2'] 
 =
M_2^* \otimes_\pi K_2'
=
H_2'.
\end{align*}
Hence $(\rho, \psi, \tau)$ is a non-degenerate completely contractive bimodule mapping of ${}_C Y_D$.
The arguments above also imply that $\psi$ is completely isometric if $\phi$ is completely isometric.
Consequently $(\rho, \psi, \tau)$ is a faithful CES representation when $(\pi, \phi, \si)$ is a faithful CES representation.

For the second part of the proof suppose we apply the same construction to $(\rho, \psi, \tau)$ to obtain a $(\pi', \phi' ,\si') \colon {}_A X_B \to \B(K_1',K_2')$ with
\[
K_1' = M_1 \otimes_ \tau M_1^* \otimes_\si K_1
\qand
K_2' = M_2 \otimes_\rho M_2^* \otimes_ \pi K_2.
\]
Then $\si'$ is unitarily equivalent to $\si$ by some $V \in \B(K_1, K_1')$ such that $V^*(\eta_1 \otimes \eta_2^* \otimes k_1) = \si(\eta_1 \eta_2^*) k_1$ for $\eta_1, \eta_2 \in M_1$ and $k_1 \in K_1$.
Similarly $\pi'$ is unitarily equivalent to $\pi$ by some $U \in \B(K_2,K_2')$ such that $U \pi(\xi_1 \xi_2^*) k_2 = \xi_1 \otimes \xi_2^* \otimes k_2$ for $\xi_1, \xi_2 \in M_2$ and $k_2 \in K_2$.
Then straightforward computations for $x \in M_2 M_2^* X M_1 M_1^*$ and then using continuity imply that $\phi'(x) = U \phi(x) V^*$ for all $x \in \ncl{M_2 M_2^* X M_1 M_1^*} = X$.
\end{proof}

\begin{corollary}\label{C: key}
Let $X$ and $Y$ be strongly TRO equivalent by $M_1$ and $M_2$, and let
\[
A = \ncl{M_2M_2^*}, \, B = \ncl{M_1M_1^*}, \, C = \ncl{M_2^*M_2}, \, D = \ncl{M_1^*M_1}.
\]
Proposition \ref{P: bij rep} then yields that for every non-degenerate completely contractive bimodule map $(\pi, \phi, \si) \colon {}_A X_B \to \B(K_1, K_2)$ there exists a non-degenerate completely contractive bimodule map $(\rho, \psi, \tau) \colon {}_C Y_D \to \B(H_1, H_2)$ such that $\ncl{\phi(X)}$ and $\ncl{\psi(Y)}$ are strongly TRO equivalent.
In particular $(\pi, \phi, \si)$ is a faithful CES representation if and only if so is $(\rho, \psi, \tau)$.
\end{corollary}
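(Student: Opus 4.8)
The plan is to extract the equivalence TRO's straight from the machinery already built in the proof of Proposition \ref{P: bij rep}; the statement then reduces to a short computation, and the faithful CES clause requires nothing new.

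First I would feed the given $(\pi,\phi,\si)$ into Proposition \ref{P: bij rep} to produce the non-degenerate completely contractive bimodule map $(\rho,\psi,\tau) \colon {}_C Y_D \to \B(H_1,H_2)$. The proposition already records that this assignment preserves non-degenerate faithful CES representations, so the final sentence of the corollary holds at once: $(\pi,\phi,\si)$ is a faithful CES representation if and only if $(\rho,\psi,\tau)$ is.

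The substance is the strong TRO equivalence of $\ncl{\phi(X)}$ and $\ncl{\psi(Y)}$. I would reuse the TRO morphisms $s \colon M_1 \to \B(H_1,K_1)$ and $t \colon M_2 \to \B(H_2,K_2)$ from that proof and set $Q := \ncl{s(M_1)}$ and $P := \ncl{t(M_2)}$; these are TRO's, being norm closures of ternary-morphic images of TRO's, and they sit in the correct operator-space slots. For the identity $\ncl{\psi(Y)} = \ncl{P^* \ncl{\phi(X)} Q}$ I would recall that $\psi(n^* x m) = t(n)^*\phi(x)s(m)$ for $n \in M_2$, $x\in X$, $m\in M_1$, so that $\Span(t(M_2)^*\phi(X)s(M_1)) = \psi(\Span(M_2^* X M_1))$; taking closures and using $Y = \ncl{M_2^* X M_1}$ with continuity of $\psi$ gives the claim. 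For the reverse identity $\ncl{\phi(X)} = \ncl{P \ncl{\psi(Y)} Q^*}$ I would expand $P\,\psi(Y)\,Q^* = t(M_2)t(M_2)^*\phi(X)s(M_1)s(M_1)^*$ and collapse the outer products via the ternary relations $t(m)t(n)^* = \pi(mn^*)$ and $s(m)s(n)^* = \si(mn^*)$, which give $\ncl{t(M_2)t(M_2)^*} = \pi(A)$ and $\ncl{s(M_1)s(M_1)^*} = \si(B)$, reducing the span to $\pi(A)\phi(X)\si(B)$.

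The last, and mildest, obstacle is to close the loop $\ncl{\pi(A)\phi(X)\si(B)} = \ncl{\phi(X)}$: here I would use the bimodule identity $\pi(a)\phi(x)\si(b) = \phi(axb)$ to rewrite the span as $\phi(\Span(AXB))$, and then the non-degeneracy of ${}_A X_B$, which yields $\ncl{AXB} = \ncl{A\ncl{XB}} = \ncl{AX} = X$, to pass through the continuous map $\phi$. The only care needed throughout is bookkeeping the norm closures, so that $P$ and $Q$ genuinely are TRO's and the intermediate linear spans have the intended closed spans; no faithfulness hypothesis enters this part, as that is inherited verbatim from Proposition \ref{P: bij rep}.
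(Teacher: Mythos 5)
Your proposal is correct and follows essentially the route the paper intends: the corollary is stated as an immediate consequence of Proposition \ref{P: bij rep}, and the implementing TRO's are exactly the closures of $t(M_2)$ and $s(M_1)$ from that proposition's proof, with the two equivalence identities verified via $\psi(n^*xm)=t(n)^*\phi(x)s(m)$, the ternary relations $t(m)t(n)^*=\pi(mn^*)$, $s(m)s(n)^*=\si(mn^*)$, and the non-degeneracy $\ncl{AXB}=X$. The faithful CES clause is, as you say, inherited directly from the bijection statement of the proposition.
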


\subsection{Strong $\Delta$-equivalence}

We now pass to the representation-free equivalence for operator spaces.

\begin{definition}\label{D: Delta op sp}
Let $X\subseteq B(K_1, K_2)$ and $Y\subseteq B(H_1, H_2)$ be operator spaces.
We say that $X$ and $Y$ are \emph{strongly $\Delta$-equivalent} if they have completely isometric representations $\phi$ and $\psi$ such that $\phi(X)$ is strongly TRO equivalent to $\psi(Y)$.
\end{definition}

We will use the faithful CES representations of a specific C*-bimodule.
Suppose that $\I(\S(X)) \to \B(L)$ is a non-degenerate and faithful representation.
Then \cite[Paragraph 4.6.9]{BleLeM04} yields a faithful CES representation
\[
(\wt\pi, \phi, \wt\si) \colon {}_{\A_l(X)} X_{\A_r(X)} \to \B(L_1, L_2).
\]
By Proposition \ref{P: nd compression} we may assume that $\phi$ is non-degenerate by passing to the non-degenerate compression.

\begin{lemma}\label{L: fix one get one}
Let $X\subseteq B(K_1, K_2)$ and $Y\subseteq B(H_1, H_2)$ be operator spaces that act non-degenerately.
Let
\[
(\wpi, \phi, \wsi) \colon {}_{\A_l(X)} X_{\A_r(X)} \to \B(L_1, L_2)
\]
be a non-degenerate faithful CES representation.
If $X$ is strongly TRO equivalent to $Y$ then there exists a non-degenerate completely isometric mapping $\psi$ of $Y$ such that $\phi(X)$ is strongly TRO equivalent to $\psi(Y)$.
\end{lemma}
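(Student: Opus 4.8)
The plan is to transfer the given faithful CES representation, which is framed over the multiplier algebras $\A_l(X)$ and $\A_r(X)$, into a faithful CES representation framed over the C*-algebras $A$ and $B$ attached to the TRO equivalence, and then to feed it into Corollary \ref{C: key}.

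First I would record the relevant bimodule structures. Since $X$ is strongly TRO equivalent to $Y$ by $M_1$ and $M_2$, the discussion following Definition \ref{D: TRO op sp} produces non-degenerate C*-bimodules ${}_A X_B$ and ${}_C Y_D$, where $A = \ncl{M_2 M_2^*}$, $B = \ncl{M_1 M_1^*}$, $C = \ncl{M_2^* M_2}$ and $D = \ncl{M_1^* M_1}$; moreover, as $X$ and $Y$ act non-degenerately, $A, B$ act faithfully on $X$ and $C, D$ act faithfully on $Y$.

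The key step is to reframe $(\wpi, \phi, \wsi)$ as a representation of ${}_A X_B$. The left action of $A$ on $X$ is an oplication (a c.a.i. of $A$ supplies the net required in the definition), so it induces a $*$-homomorphism $\theta_l \colon A \to \A_l(X)$ implementing the action; symmetrically the right action of $B$ induces $\theta_r \colon B \to \A_r(X)$. Setting $\pi := \wpi \circ \theta_l$ and $\si := \wsi \circ \theta_r$, I would check that $(\pi, \phi, \si)$ is a bimodule map of ${}_A X_B$ with the same middle map $\phi$, because $\theta_l(a)$ and $\theta_r(b)$ implement the actions of $a$ and $b$ while $(\wpi, \phi, \wsi)$ is already a bimodule map over $\A_l(X), \A_r(X)$. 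Faithfulness of $\pi$ and $\si$ is inherited: $\theta_l$ and $\theta_r$ are injective since $A$ and $B$ act faithfully on $X$, and $\wpi, \wsi$ are faithful. Since $\phi$ is non-degenerate and ${}_A X_B$ is a non-degenerate C*-bimodule, Lemma \ref{L: nd iff} upgrades $(\pi, \phi, \si)$ to a non-degenerate faithful CES representation of ${}_A X_B$.

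Finally I would apply Corollary \ref{C: key} to $(\pi, \phi, \si)$, obtaining a non-degenerate completely contractive bimodule map $(\rho, \psi, \tau)$ of ${}_C Y_D$ that is again a faithful CES representation (because $(\pi, \phi, \si)$ is), and such that $\ncl{\phi(X)}$ and $\ncl{\psi(Y)}$ are strongly TRO equivalent. As $\phi$ and $\psi$ are completely isometric on the complete spaces $X$ and $Y$, their ranges are norm-closed, so in fact $\phi(X)$ and $\psi(Y)$ are strongly TRO equivalent and $\psi$ is the desired non-degenerate completely isometric map of $Y$. The main obstacle is precisely the reframing in the previous paragraph: Corollary \ref{C: key} speaks about the C*-algebras $A, B, C, D$ coming from the TRO equivalence, whereas the representation in hand is packaged over the multiplier algebras, and bridging the two forces one to identify the TRO actions with multiplier actions through the oplication machinery and to verify that faithfulness and non-degeneracy survive the composition.
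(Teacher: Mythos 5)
Your proposal is correct and follows essentially the same route as the paper: pull the multiplier-algebra representation back to a faithful CES representation of ${}_A X_B$ via the oplication-induced $*$-homomorphisms $A \to \A_l(X)$ and $B \to \A_r(X)$, verify faithfulness from the faithful actions of $A$ and $B$ on $X$, upgrade to non-degeneracy via Lemma \ref{L: nd iff}, and conclude with Corollary \ref{C: key}. The only cosmetic difference is that the paper argues faithfulness of $\pi$ directly from $\pi(a)\phi(x)=\phi(ax)$ and injectivity of $\phi$, whereas you factor it through injectivity of the oplication map composed with faithfulness of $\wpi$; both are valid.
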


\begin{proof}
Suppose that $X$ and $Y$ are strongly TRO equivalent by $M_1$ and $M_2$.
Then we have the non-degenerate C*-bimodules ${}_A X_B$ and ${}_C Y_D$ for
\[
A = \ncl{M_2 M_2^*}, \, B = \ncl{M_1 M_1^*}, \, C= \ncl{M_2^* M_2}, \, D = \ncl{M_1^* M_1}.
\]
Notice that $A$ and $B$ act faithfully on $X$ since they act non-degenerately on $X$ and $X$ acts non-degenerately on the Hilbert spaces.
The mapping
\[
m \colon A \times X \rightarrow X : (a, x) \mapsto ax
\]
is an oplication since $X = \ncl{AX}$.
Thus there exists a $*$-homomorphism $\al \colon A \rightarrow \A_l(X)$ satisfying $\al(a) x = ax$.
For the $*$-representation $\pi := \wpi \circ \al \colon A \to \B(L_2)$ we obtain
\[
\pi(a) \phi(x) = \wpi(\al(a)) \phi(x) = \phi(\al(a) x) = \phi(ax)
\]
for all $a\in A$ and $x \in X$.
Since $A$ acts faithfully on $X$ we have that $\pi$ is faithful.
Similarly we define a faithful $*$-representation $\si \colon B \to \B(L_1)$.
Thus we obtain a faithful CES representation $(\pi, \phi, \si) \colon {}_{A} X_{B} \to \B(L_1, L_2)$.
Lemma \ref{L: nd iff} implies that $(\pi, \phi, \si)$ is also non-degenerate.
Then Corollary \ref{C: key} induces the required $\psi$ of $Y$ which completes the proof.
\end{proof}

Now we are in position to show the main result of this section.

\begin{theorem}\label{T: eq delta}
Strong $\Delta$-equivalence of operator spaces is an equivalence relation.
\end{theorem}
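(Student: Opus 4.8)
The plan is to lean on the fact that strong TRO equivalence is already known to be an equivalence relation (Theorem \ref{T: eq rel}), and to use Lemma \ref{L: fix one get one} as the device that forces a \emph{common} representation of the middle space in the transitivity argument. Reflexivity and symmetry are immediate: for reflexivity I would take $\phi = \psi = \id$ and invoke reflexivity of strong TRO equivalence, while for symmetry, if $\phi(X)$ is strongly TRO equivalent to $\psi(Y)$ then $\psi(Y)$ is strongly TRO equivalent to $\phi(X)$ by the symmetry half of Theorem \ref{T: eq rel}, so the witnessing pair $(\psi, \phi)$ shows that $Y$ is strongly $\Delta$-equivalent to $X$.

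Transitivity is where the real work lies, and the obstruction to naively chaining is clear. From strong $\Delta$-equivalence of $X$ and $Y$ one obtains completely isometric $\phi_1, \psi_1$ with $\phi_1(X)$ strongly TRO equivalent to $\psi_1(Y)$, while from strong $\Delta$-equivalence of $Y$ and $Z$ one obtains $\psi_2, \phi_2$ with $\psi_2(Y)$ strongly TRO equivalent to $\phi_2(Z)$. The two representations $\psi_1$ and $\psi_2$ of $Y$ need not coincide, so transitivity of strong TRO equivalence does not apply directly. The idea is to normalise both to a single fixed representation of $Y$. Concretely, I would fix a non-degenerate faithful CES representation $(\wpi, \rho, \wsi) \colon {}_{\A_l(Y)} Y_{\A_r(Y)} \to \B(L_1, L_2)$ with $\rho$ non-degenerate and completely isometric, as constructed just before Lemma \ref{L: fix one get one}. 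After passing to non-degenerate compressions (Proposition \ref{P: nd compression} together with the surrounding discussion) I may assume that every representation in sight acts non-degenerately.

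Next I would read the concrete equivalence between $\psi_1(Y)$ and $\phi_1(X)$ through the complete isometry $\psi_1$, which identifies $\A_l(\psi_1(Y))$ with $\A_l(Y)$ and hence transports $\rho$ to the canonical CES representation of $\psi_1(Y)$. Applying Lemma \ref{L: fix one get one} with $\psi_1(Y)$ in the role of the fixed space and $\phi_1(X)$ in the role of the second space then produces a completely isometric representation $\phi'$ of $X$ with $\rho(Y)$ strongly TRO equivalent to $\phi'(X)$. Running the identical argument on $\psi_2(Y)$ and $\phi_2(Z)$ yields a completely isometric $\phi''$ of $Z$ with $\rho(Y)$ strongly TRO equivalent to $\phi''(Z)$. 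Having replaced both middle representations by the single concrete space $\rho(Y)$, I would finish by chaining: $\phi'(X)$ and $\phi''(Z)$ are each strongly TRO equivalent to $\rho(Y)$, so Theorem \ref{T: eq rel} gives that $\phi'(X)$ is strongly TRO equivalent to $\phi''(Z)$, whence $X$ is strongly $\Delta$-equivalent to $Z$ via the pair $(\phi', \phi'')$.

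The main obstacle is precisely this alignment step: one must verify that Lemma \ref{L: fix one get one} can be invoked with the \emph{same} fixed canonical representation $\rho$ of $Y$ on both sides. This rests on the bookkeeping that the canonical multiplier representation is intrinsic to the complete isometry class of $Y$, so that it may be read off either $\psi_1(Y)$ or $\psi_2(Y)$, combined with the non-degeneracy reductions that make the lemma applicable. Once that identification is in place, transitivity of strong TRO equivalence supplied by Theorem \ref{T: eq rel} does the rest.
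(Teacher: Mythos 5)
Your proposal is correct and takes essentially the same route as the paper: both fix the canonical non-degenerate faithful CES representation of the middle space $Y$ arising from ${}_{\A_l(Y)} Y_{\A_r(Y)}$, use Proposition \ref{P: nd compression} (with Corollary \ref{C: key}) to reduce to non-degenerate maps, apply Lemma \ref{L: fix one get one} on each side to align both equivalences with this single representation of $Y$, and conclude by transitivity of strong TRO equivalence from Theorem \ref{T: eq rel}. No further comment is needed.
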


\begin{proof}
Trivially strong $\Delta$-equivalence is reflexive and symmetric.
For transitivity let the completely isometric mappings $\phi_0$ of $X$, $\psi_0$ and $\theta_0$ of $Y$, and $\zeta_0$ of $Z$ such that $\phi_0(X)$ is strongly TRO equivalent to $\psi_0(Y)$ and $\theta_0(Y)$ is strongly TRO equivalent to $\zeta_0(Z)$.
By applying Proposition \ref{P: nd compression} to $\psi_0$ and $\theta_0$ and then Corollary \ref{C: key} we may assume that all mappings are non-degenerate.

Fix a faithful CES representation $(\wt\rho, \psi, \wt\tau)$ of ${}_{\A_l(Y)} Y_{\A_r(Y)}$ such that $\psi$ is non-degenerate.
Then Lemma \ref{L: fix one get one} applies to $\psi \circ \psi_0^{-1}$ of $\psi_0(Y)$ to give a non-degenerate completely isometric map $\phi_1$ of $\phi_0(X)$ so that $\phi_1(\phi_0(X))$ is strongly TRO equivalent to $\psi(Y)$.
Similarly Lemma \ref{L: fix one get one} applies to $\psi \circ \theta_0^{-1}$ of $\theta_0(Y)$ to produce a non-degenerate completely isometric mapping $\zeta_1$ of $\zeta_0(Z)$ so that $\zeta_1(\zeta_0(Z))$ is strongly TRO equivalent to $\psi(Y)$.
Then transitivity of strong TRO equivalence given by Theorem \ref{T: eq rel} shows that $\phi_1(\phi_0(X))$ and $\zeta_1(\zeta_0(Z))$ are strong TRO equivalent, and the proof is complete.
\end{proof}

\begin{remark}
In \cite{Ele14} the first author introduces the notion of strong $\Delta$-equivalence for operator algebras.
Two operator algebras $X$ and $Y$ are called \emph{strongly $\Delta$-equivalent as operator algebras} if there exist completely isometric homomorphisms $\phi$ and $\psi$, and a TRO $M$ such that
\[
\phi(X) = \ncl{M \psi(Y) M^*} \qand \psi(Y) = \ncl{M^* \phi(X) M}.
\]
Consequently if two operator algebras are strongly $\Delta$-equivalent in the sense of \cite{Ele14} then they are so as operator spaces.
However the converse fails.

For a counterexample, define the operator algebras
\[
X = \{ \begin{bmatrix} 0 & \la \\ 0 & 0 \end{bmatrix} \in M_2(\bC) \mid \la \in \bC \}
\qand
Y = \{ \begin{bmatrix} 0 & 0 \\ 0 & \la \end{bmatrix} \in M_2(\bC) \mid \la \in \bC\}.
\]
They are are strongly $\Delta$-equivalent as operator spaces by the TRO's
\[
M_1 = \bC I_2 \qand M_2 = \{ \begin{bmatrix} 0 & \la \\ 0  & 0 \end{bmatrix} \in M_2(\bC) \mid \la \in \bC\}
\]
but they are not strongly $\Delta$-equivalent as operator algebras.
If they were, then their diagonals $\Delta(X):= X \cap X^* = 0$ and $\Delta(Y) := Y \cap Y^* = Y$ would also be strongly $\Delta$-equivalent as operator algebras by \cite[Theorem 2.3]{Ele14}, which is a contradiction.
\end{remark}

\section{Stable Isomorphism} \label{S: stable}

As usual we write $\K(K_1, K_2)$ for the compact operators in $\B(K_1, K_2)$ and $\K$ for $\K(\ell^2)$.
For an operator space $X \subseteq \B(K_1, K_2)$ we denote by $\C_\infty(X)$ the space of columns
\[
[x_1, x_2, \dots]^t \in \B(K_1, K_2^{(\infty)})
\]
where $x_i \in X$ and the sequence $(\sum_{i=1}^n x_i^* x_i)$ converges in norm topology.
Similarly $\R_\infty(X)$ denotes the space of the rows
\[
[x_1, x_2, \dots] \in \B(K_1^{(\infty)}, K_2)
\]
where $ x_i \in X$ and the sequence $(\sum_{i=1}^n x_i x_i^*)$ converges in norm topology.
Finally $\M_\infty(X)$ is the set of bounded operators from $K_1^{(\infty)}$ to $K_2^{(\infty)}$ which can be represented as infinite matrices with entries in $X$.
We write $\K_\infty(X) $ for the norm closure of finitely supported matrices in $\M_\infty(X)$.
Then we obtain the following identifications
\[
\K_\infty(X)  \simeq \R_\infty(\C_\infty(X)) \simeq \C_\infty(\R_\infty(X)) \simeq X \otimes \K
\]
by completely isometric isomorphic maps.
We underline here the difference between the notation $\K_\infty(X)$ and the notation $\K(X)$ of Example \ref{E: compacts}.

\begin{definition}\label{D: stably}
We call two operator spaces $X$ and $Y$ \emph{stably isomorphic} if the spaces $\K_\infty(X)$ and $\K_\infty(Y)$ are
completely isometrically  isomorphic.
\end{definition}

It is immediate that stable isomorphism of operator spaces is an equivalence relation.

\begin{remark}
Definition \ref{D: stably} coincides with the stable isomorphism of \cite{BMP00} for operator algebras with c.a.i.'s.
This follows by the fact that if two operator algebras with c.a.i.'s are completely isometric isomorphic as operator spaces then they are so as operator algebras; see for example \cite[Proposition 4.5.13]{BleLeM04}.
In particular Definition \ref{D: stably} coincides with the usual stable isomorphism for C*-algebras of \cite{BGR77}, as contractive homomorphisms between C*-algebras are automatically positive.
\end{remark}

\begin{theorem} \label{T: stable gives delta}
Stably isomorphic operator spaces are strongly $\Delta $-equivalent.
\end{theorem}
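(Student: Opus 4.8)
The plan is to reduce everything to transitivity of strong $\Delta$-equivalence (Theorem \ref{T: eq delta}) by inserting the stabilizations $\K_\infty(X)$ and $\K_\infty(Y)$ as intermediate spaces. The central claim I would establish is that \emph{every operator space is strongly TRO equivalent to its own stabilization}: for $X\subseteq\B(K_1,K_2)$ I would show that $X$ is strongly TRO equivalent to $\K_\infty(X)\subseteq\B(K_1^{(\infty)},K_2^{(\infty)})$ in the sense of Definition \ref{D: TRO op sp}, hence strongly $\Delta$-equivalent to it (take $\phi,\psi$ to be the inclusions). Granting the central claim, and its analogue for $Y$, the argument closes as follows. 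Stable isomorphism (Definition \ref{D: stably}) gives a completely isometric isomorphism $\K_\infty(X)\simeq\K_\infty(Y)$, and completely isometrically isomorphic operator spaces are strongly $\Delta$-equivalent: one represents both on the same space through the isomorphism and invokes Definition \ref{D: Delta op sp} with the trivial TRO's $\bC I_{K_1}$, $\bC I_{K_2}$. Thus $\K_\infty(X)$ is strongly $\Delta$-equivalent to $\K_\infty(Y)$, and the chain $X\sim\K_\infty(X)\sim\K_\infty(Y)\sim Y$ together with Theorem \ref{T: eq delta} yields that $X$ and $Y$ are strongly $\Delta$-equivalent.

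For the central claim I would use the two column TRO's built from scalar multiples of the identity: put $M_2=\{[\la_i I_{K_2}]_i^t : (\la_i)\in\ell^2\}\subseteq\B(K_2,K_2^{(\infty)})$ and, likewise, $M_1=\{[\mu_j I_{K_1}]_j^t : (\mu_j)\in\ell^2\}\subseteq\B(K_1,K_1^{(\infty)})$. Each is a TRO, since for columns $c,c',c''$ the product $c'^*c''$ is a scalar multiple of the identity and so $c\,c'^*c''$ is again a scalar column. I would assign $\K_\infty(X)$ the role of the ``$X$''-space and $X$ the role of the ``$Y$''-space in Definition \ref{D: TRO op sp}; this matches the mapping shapes, as $M_2 X M_1^*\subseteq\B(K_1^{(\infty)},K_2^{(\infty)})$ and $M_2^*\K_\infty(X)M_1\subseteq\B(K_1,K_2)$.

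It then remains to verify the two closure identities $\K_\infty(X)=\ncl{M_2 X M_1^*}$ and $X=\ncl{M_2^*\K_\infty(X)M_1}$, which I expect to be the main (though routine) obstacle. For the first, a single product $[\la_i I_{K_2}]^t\, x\, [\bar\mu_j I_{K_1}]$ is exactly the infinite matrix $[\la_i\bar\mu_j\,x]_{ij}$, i.e. the elementary tensor of $x$ with the rank-one operator $\la\mu^*$ on $\ell^2$ (where $\la=(\la_i)$, $\mu=(\mu_j)\in\ell^2$); since finite-rank operators are norm-dense in $\K$, the closed linear span of such products is $X\otimes\K\simeq\K_\infty(X)$ under the identifications recorded before Definition \ref{D: stably}. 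For the second, a product $M_2^* T M_1$ with $T=[T_{ij}]\in\K_\infty(X)$ equals the weighted corner sum $\sum_{i,j}\bar\la_i\,T_{ij}\,\mu_j$, a norm-convergent sum of elements of $X$, hence an element of the norm-closed space $X$; conversely any $x\in X$ is recovered as $M_2^*(E_{11}\otimes x)M_1$ by compressing against the first basis column and row. This establishes the central claim and, through the chain above, completes the proof.
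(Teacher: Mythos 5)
Your proposal is correct and follows essentially the same route as the paper: show $X$ is strongly TRO equivalent to $\K_\infty(X)$ via column TRO's, then use the completely isometric isomorphism $\K_\infty(X)\simeq\K_\infty(Y)$ and transitivity (Theorem \ref{T: eq delta}). The paper nominally writes $M_i=\K(K_i,K_i^{(\infty)})$, but its own justification (``$M_i=\bC\otimes\C_\infty(\bC)$ acting on $K_i\otimes\bC$'') describes exactly your columns $[\la_j I_{K_i}]^t$, which are the TRO's that actually make both closure identities hold; your verification of those identities is a useful fleshing-out of what the paper leaves implicit.
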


\begin{proof}
Every operator space $X \subseteq \B(K_1, K_2)$ is strongly TRO equivalent (thus strongly $\Delta$-equivalent) to $\K_\infty(X)$ for the TRO's $M_1 = \K(K_1, K_1^{(\infty)})$ and $M_2 = \K(K_2, K_2^{(\infty)})$. 
This follows by writing $M_1 = \bC \otimes \C_\infty(\bC)$ acting on $K_1 \otimes \bC$, $M_2 = \bC \otimes \C_\infty(\bC)$ acting on $K_2 \otimes \bC$, and by using that $X \simeq X \otimes \bC$.
Now if $\K_\infty(X)$ is completely isometrically isomorphic to $\K_\infty(Y)$ then they are strongly $\Delta$-equivalent and transitivity from Theorem \ref{T: eq delta} shows that $X$ is strongly $\Delta$-equivalent to $Y$.
\end{proof}

\begin{remark}
The proof of Theorem \ref{T: stable gives delta} applies also for showing that if $X$ and $Y$ are operator spaces such that $X \otimes \K(H) \simeq Y \otimes \K(H)$ for a Hilbert space $H$, then $X$ and $Y$ are strongly $\Delta $-equivalent.
\end{remark}

\begin{remark}
A converse of Theorem \ref{T: stable gives delta} is not expected in full generality.
Indeed strongly Morita equivalent C*-algebras are strongly TRO equivalent and thus strongly $\Delta$-equivalent.
However there are examples of strongly Morita equivalent C*-algebras that are not stably isomorphic \cite{BGR77}.
\end{remark}

Nevertheless we show that stable isomorphism coincides with strong $\Delta$-equivalen\-ce under the usual separability condition.
The key is the existence of $\si$-units as in \cite{BGR77, Ele14}.

\begin{theorem}\label{T: key}
Let $X$ and $Y$ be strongly TRO equivalent by $M_1$ and $M_2$.
Suppose there exist sequences $(m_i)_{i\in \bN}, (n_i)_{i\in \bN}\subseteq M_2$ such that $\sum_{i=1}^km_i m_i^*$ and $\sum_{i=1}^kn^*_in_i$ are contractions for all $k$ that satisfy
\[
\lim_k \sum_{i=1}^k m_i m_i^* x = x \foral x \in X \, \text{ and } \,
\lim_k\sum_{i=1}^k n_i^*n_i y=y  \foral y \in Y,
\]
and sequences $(e_i)_{i\in \bN}, (f_i)_{i\in \bN}\subseteq M_1$ such that $\sum_{i=1}^k e_ie_i^*$ and $\sum_{i=1}^k f_i^*f_i$ are contractions for all $k$ that satisfy
\[
\lim_k x \sum_{i=1}^k e_ie_i^* = x \foral x \in X \, \text{ and } \,
\lim_k y \sum_{i=1}^k f_i^*f_i = y \foral y \in Y.
\]
Then $X$ and $Y$ are stably isomorphic.
\end{theorem}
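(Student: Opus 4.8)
The plan is to produce a completely isometric isomorphism $\K_\infty(X) \simeq \K_\infty(Y)$ directly, by building two mutually inverse completely contractive maps out of the four $\si$-unit sequences, reinterpreted as infinite column contractions over the TRO's $M_1$ and $M_2$. I will use the identification $\K_\infty(X) \simeq X \otimes \K$, and the fact that for a sequence $(m_i) \subseteq M_2$ with $\sum_{i=1}^k m_i m_i^*$ uniformly contractive the infinite column $C_m = [m_1, m_2, \dots]^{t}$ is a contraction whose associated sum $\sum_i m_i m_i^*$ is exactly the one-sided approximate unit in the hypothesis; these columns are the gadgets that move elements across the equivalences $X = \ncl{M_2 Y M_1^*}$ and $Y = \ncl{M_2^* X M_1}$.

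First I would write down the two transfer maps. On $Y$, define $\theta(y) = [m_i\, y\, e_j^*]_{i,j}$; each entry lies in $M_2 Y M_1^* \subseteq X$, and because $\sum_i m_i m_i^*$ and $\sum_j e_j e_j^*$ converge the finite truncations converge in norm, so $\theta(y) \in \K_\infty(X)$. Factoring $\theta(y) = C_m\, y\, C_e^*$ through the column contractions $C_m = [m_i]_i$ and $C_e = [e_j]_j$ shows $\theta$ is completely contractive. Tensoring with $\id_\K$ and flattening $\K \otimes \K \simeq \K$ promotes $\theta$ to a completely contractive map $\Theta \colon \K_\infty(Y) \to \K_\infty(X)$. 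A symmetric construction from $(n_i) \subseteq M_2$ and $(f_j) \subseteq M_1$, namely $\eta(x) = [n_i^*\, x\, f_j]_{i,j}$ with entries in $M_2^* X M_1 \subseteq Y$, yields a completely contractive $H \colon \K_\infty(X) \to \K_\infty(Y)$.

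The heart of the matter is to show that $H \circ \Theta$ and $\Theta \circ H$ are the identity. Computing $H \circ \Theta$ produces an infinite matrix whose entries are $(n_a^* m_i)\, y_{pq}\, (e_j^* f_b)$, so the composition is conjugation of a stabilization of $y$ by the infinite matrices $\mathbf N = [\,n_a^* m_i\,]$ over $C = \ncl{M_2^* M_2}$ and $\mathbf E = [\,e_j^* f_b\,]$ over $D = \ncl{M_1^* M_1}$. Here lies the main obstacle: the four given sequences are a priori unmatched, and the naive computation throws up cross terms such as $\sum_a n_a n_a^*$ that the hypotheses do not control, since we are only told $\sum_a n_a^* n_a \to 1$ on $Y$. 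The way around this is to note that the existence of $(m_i)$ and $(n_i)$ exhibits both $A = \ncl{M_2 M_2^*}$ and $C$ as $\si$-unital, so the standard TRO $\si$-unit trick (cf. \cite[Lemma 2.3]{Bro77}) lets us replace them by a \emph{single} sequence $(\mu_i) \subseteq M_2$ for which $\sum_i \mu_i \mu_i^*$ is a left approximate unit for $X$ \emph{and} $\sum_i \mu_i^* \mu_i$ is a left approximate unit for $Y$; likewise a single $(\eps_j) \subseteq M_1$ serves both right-hand sides. Rebuilding $\Theta$ and $H$ from $(\mu_i)$ and $(\eps_j)$, the matrices $\mathbf N = [\,\mu_a^* \mu_i\,]$ and $\mathbf E = [\,\eps_j^* \eps_b\,]$ become genuine projections, where $\mathbf N^2 = \mathbf N = \mathbf N^*$ follows from $\sum_i \mu_i \mu_i^* \to 1$ on the relevant range (passing to the non-degenerate compression if necessary), so that $H \circ \Theta$ is compression onto a full corner.

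Finally I would close the argument by absorbing these full corners: since $\mathbf N$ and $\mathbf E$ are full $\si$-unital projections in the stabilized picture, the standard stabilization argument identifies the corner with the whole, showing that $H \circ \Theta$ is a completely isometric isomorphism onto $\K_\infty(Y)$, and symmetrically for $\Theta \circ H$. Hence $\Theta$ is a completely isometric isomorphism $\K_\infty(Y) \simeq \K_\infty(X)$, and $X$ and $Y$ are stably isomorphic. I expect the genuine work to be concentrated in the alignment step and in verifying the fullness that powers the corner-absorption; the remainder is bookkeeping with the column contractions and the identification $\K \otimes \K \simeq \K$.
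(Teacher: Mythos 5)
Your strategy---one pair of mutually inverse completely contractive maps assembled from the $\si$-units---breaks down at exactly the point you flag as the ``main obstacle'', and the repair you propose is not available. There is in general \emph{no} single sequence $(\mu_i)\subseteq M_2$ with all partial sums $\sum_{i=1}^k\mu_i\mu_i^*$ and $\sum_{i=1}^k\mu_i^*\mu_i$ contractions such that the former are a left approximate unit for $X$ and the latter a left approximate unit for $Y$. Take $Y=\bC$ acting on $\bC$, $M_1=M_2=\C_\infty(\bC)\subseteq\B(\bC,\ell^2)$ and $X=\ncl{M_2YM_1^*}=\K(\ell^2)$; everything acts non-degenerately, so passing to compressions does not help. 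Here $\mu_i^*\mu_i=\|\mu_i\|^2$ is a scalar, so the contraction condition forces $\sum_i\|\mu_i\|^2\le 1$; then $\sum_{i=1}^k\mu_i\mu_i^*$ converges in norm to a compact positive contraction $T$, and the requirement $Tx=x$ for all $x\in\K(\ell^2)$ would force $T=I_{\ell^2}$, which is not compact. This is precisely why Brown's Lemma 2.3, and the hypotheses of the theorem, supply \emph{four} separate one-sided sequences rather than one two-sided sequence per TRO. Without the alignment, your matrices $[\mu_a^*\mu_i]$ are not projections, the compositions $H\circ\Theta$ and $\Theta\circ H$ cannot be reduced to the identity, and the concluding corner-absorption would in any case only identify the corner with the whole by \emph{some} isomorphism---it would not make your original $\Theta$ surjective.

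The paper's proof avoids ever composing the two transfer maps. It introduces the intermediate space $Z=\ncl{XM_1}$ and uses the sequences one at a time and one-sidedly: the maps $z\mapsto[m_i^*z]$ and $[y_i]\mapsto\sum_i m_iy_i$ compose to the identity on $Z$, exhibiting $Z$ as an orthocomplemented summand of $\C_\infty(Y)$ (the composite the other way is an adjointable idempotent on the right Hilbert $D$-module $\C_\infty(Y)$), while $y\mapsto[n_iy]$ and $[z_i]\mapsto\sum_i n_i^*z_i$ exhibit $Y$ inside $\C_\infty(Z)$. The Blecher/Brown--Green--Rieffel absorption identity $\C_\infty(W)\simeq\C_\infty(\C_\infty(W))$ then yields $\C_\infty(Y)\simeq\C_\infty(Z)\oplus^\perp\C_\infty(Y)$ and its mirror image, whence $\C_\infty(Y)\simeq\C_\infty(Z)$; repeating with $(e_i)$ and $(f_i)$ gives $\C_\infty(X)\simeq\C_\infty(Z)$, and $\K_\infty\simeq\R_\infty\circ\C_\infty$ finishes. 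If you want to salvage your two-maps picture, you must replace the claim ``$H\circ\Theta=\mathrm{id}$'' by this Cantor--Bernstein-with-absorption scheme.
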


\begin{proof}
We will make use of a standard argument of absorption; see for example \cite{Ble96} and \cite[Corollary 8.2.6]{BleLeM04}.
For convenience set
\[
A = \ncl{M_2 M_2^*}, \, B = \ncl{M_1 M_1^*}, \, C = \ncl{M_2^* M_2}, \, D = \ncl{M_1^* M_1}
\]
which are $\si$-unital C*-algebras by assumption.
First we show that
\[
\C_\infty(Y) \simeq \C_\infty(Z) \oplus^\perp \C_\infty(Y) \qfor Z = \ncl{XM_1}.
\]
To this end let the completely contractive mappings
$\phi \colon Z \to \C_\infty(Y)$ and $\psi \colon \C_\infty(Y) \to Z$ such that
\[
\phi(z) = [m_i^* z] \qand  \psi([y_i]) = \sum_i m_i y_i.
\]
Then $\psi \circ \phi = \id_Z$ and therefore $\phi$ is a complete isometry, $\psi$ is onto $Z$ and $p : = \phi \circ \psi$ is an idempotent.
In particular $p$ is a projection in the adjointable operators on $\C_\infty(Y)$, where the latter is seen as a right Hilbert $D$-module.
Hence $p(Z)$ is orthocomplemented in $\C_\infty(Y)$.
Since $\ran p = \phi(Z)$, we get that $\C_\infty(Y) = \phi(Z) \oplus^\perp \ran(1-p)$.
Note that $\phi(g)^* \phi(g) = g^*g$,
hence $\C_\infty(Y) \simeq Z \oplus^\perp \ran(1-p)$.
Then the absorption argument yields
\begin{align*}
\C_\infty(Y)
& \simeq \C_\infty(\C_\infty(Y)) 
\simeq \C_\infty(Z) \oplus^\perp Z \oplus^\perp \ran(1-p)
\simeq \C_\infty(Z) \oplus^\perp \C_\infty(Y).
\end{align*}
Similarly one can show that $\C_\infty(Y) \simeq \C_\infty(Z)$ for the maps
\[
Y \to \C_\infty(Z) : y \mapsto [n_i y]
\qand
\C_\infty(Z) \to Y : [z_i] \mapsto \sum_i n_i^* z_i.
\]
and by interchanging the roles of $Y$ and $Z$ in the arguments above.

To end the proof observe that $Z$ is strongly TRO equivalent to $X$ by $\bC$ and $M_1$.
By applying with $Z$ in the place of $Y$ above we also get that $\C_\infty(X) \simeq \C_\infty(Z)$.
We derive that $\C_\infty(X) \simeq \C_\infty(Y)$ and therefore
\[
\K_\infty(X) \simeq \R_\infty(\C_\infty(X)) \simeq \R_\infty(\C_\infty(Y)) \simeq \K_\infty(Y),
\]
which completes the proof.
\end{proof}

\begin{corollary}\label{C: si-unital}
Let $X$ and $Y$ be strongly TRO equivalent by $M_1$ and $M_2$.
If the C*-algebras
\[
A = \ncl{M_2M_2^*}, \, B = \ncl{M_1M_1^*}, \, C = \ncl{M_2^*M_2}, \, D = \ncl{M_1^*M_1}
\]
admit countable approximate identities then $X$ and $Y$ are stably isomorphic.
\end{corollary}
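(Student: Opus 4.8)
The plan is to reduce the statement to Theorem \ref{T: key} by manufacturing, out of the four hypothesised countable approximate identities, the four sequences that Theorem \ref{T: key} requires. I will use only two ingredients beyond that theorem: the standard $\si$-unital trick for TRO's recorded in Section \ref{S: Pre}, and the non-degeneracy of the bimodule actions established after Definition \ref{D: TRO op sp}. Recall from there that $X$ and $Y$ carry the non-degenerate C*-bimodule structures ${}_A X_B$ and ${}_C Y_D$, so that
\[
\ncl{A X} = X = \ncl{X B} \qand \ncl{C Y} = Y = \ncl{Y D}.
\]

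First I would construct the sequences two at a time, one TRO at a time. Since $A = \ncl{M_2 M_2^*}$ and $C = \ncl{M_2^* M_2}$ are both $\si$-unital, the standard trick for TRO's quoted in Section \ref{S: Pre} (see \cite[Lemma 2.3]{Bro77}) furnishes sequences $(m_i)_{i\in\bN}, (n_i)_{i\in\bN} \subseteq M_2$ whose partial sums $\sum_{i=1}^k m_i m_i^*$ and $\sum_{i=1}^k n_i^* n_i$ are contractions and satisfy $\lim_k \sum_{i=1}^k m_i m_i^* \, m = m$ and $\lim_k \sum_{i=1}^k n_i^* n_i \, m = m$ for every $m \in M_2$. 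Applying the identical recipe to the TRO $M_1$, whose associated C*-algebras $B = \ncl{M_1 M_1^*}$ and $D = \ncl{M_1^* M_1}$ are likewise $\si$-unital, produces sequences $(e_i)_{i\in\bN}, (f_i)_{i\in\bN} \subseteq M_1$ with contractive partial sums $\sum_{i=1}^k e_i e_i^*$ and $\sum_{i=1}^k f_i^* f_i$ and the analogous convergence on $M_1$.

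It then remains to transport these module approximate identities from the TRO's to $X$ and $Y$. A finite-sum-and-density argument shows that $(\sum_{i=1}^k m_i m_i^*)_k$ is a two-sided approximate identity for $A$; since $A$ acts non-degenerately on $X$ and the partial sums are uniform contractions, a routine $3\eps$ estimate upgrades this to $\lim_k \sum_{i=1}^k m_i m_i^* \, x = x$ for all $x \in X$. The same reasoning, invoking in turn the non-degeneracies $\ncl{C Y} = Y$, $\ncl{X B} = X$ and $\ncl{Y D} = Y$, yields $\lim_k \sum_{i=1}^k n_i^* n_i \, y = y$ for all $y \in Y$, then $\lim_k x \sum_{i=1}^k e_i e_i^* = x$ for all $x \in X$, and finally $\lim_k y \sum_{i=1}^k f_i^* f_i = y$ for all $y \in Y$.

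These are precisely the four hypotheses of Theorem \ref{T: key}, which then delivers the stable isomorphism of $X$ and $Y$. I do not expect a genuine obstacle here: the real content of the result lives in Theorem \ref{T: key} together with the $\si$-unital TRO trick, and the corollary is essentially an assembly step. The only point that calls for any care is the passage from ``approximate identity for the C*-algebra'' to ``convergence to the identity on the bimodule,'' which is exactly where the non-degeneracy of each of the four actions is used.
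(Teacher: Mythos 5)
Your proposal is correct and follows essentially the same route as the paper: both invoke the standard $\si$-unital TRO trick of \cite[Lemma 2.3]{Bro77} to produce the four sequences with contractive partial sums forming countable c.a.i.'s for $A$, $B$, $C$, $D$, and then feed them into Theorem \ref{T: key}, with the passage from ``c.a.i.\ of the C*-algebra'' to ``approximate unit on the bimodule'' handled by the non-degeneracies $\ncl{AX}=X=\ncl{XB}$ and $\ncl{CY}=Y=\ncl{YD}$ (a step the paper leaves implicit and you spell out). The only blemish is a left/right slip where you write $\lim_k \sum_{i=1}^k n_i^* n_i\, m = m$ for $m \in M_2$ instead of $\lim_k m \sum_{i=1}^k n_i^* n_i = m$, but this does not affect the argument since what you actually use is that the partial sums form a c.a.i.\ of $C$ acting on the left of $Y$.
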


\begin{proof}
By definition the C*-algebra $\ncl{M_i^* M_i}$ is strongly Morita equivalent to $\ncl{M_iM_i^*}$ for $i=1,2$.
Therefore by a standard argument for TRO's  \cite[Lemma 2.3]{Bro77} there are sequences $(m_i)_{i\in \bN}, (n_i)_{i\in \bN}$ in $M_2$, and $(e_i)_{i\in \bN}, (f_i)_{i\in \bN}\subseteq M_1$ such that
\[
\| \sum_{i=1}^k m_i m_i^* \| \leq 1, \,
\| \sum_{i=1}^k e_i e_i^* \| \leq 1, \,
\| \sum_{i=1}^k n^*_i n_i \| \leq 1, \,
\| \sum_{i=1}^k f_i^* f_i \| \leq 1,
\]
for all $k \in \bN$, and they form countable c.a.i.'s for $A$, $B$, $C$ and $D$ respectively.
The proof then follows by applying Theorem \ref{T: key}.
\end{proof}

\begin{corollary}\label{C: sep}
Let $X$ and $Y$ be separable operator spaces.
Then they are strongly $\Delta$-equivalent if and only if they are stably isomorphic.
\end{corollary}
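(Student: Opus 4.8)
The plan is to dispatch one implication for free and put all the work into the other. Stable isomorphism already implies strong $\Delta$-equivalence by Theorem \ref{T: stable gives delta}, with no countability hypothesis, so the entire content lies in the converse. For the converse, suppose $X$ and $Y$ are separable and strongly $\Delta$-equivalent. By Definition \ref{D: Delta op sp} there are completely isometric representations $\phi$ and $\psi$ with $\phi(X)$ strongly TRO equivalent to $\psi(Y)$, say by TRO's $M_1, M_2$. First I would reduce to the concrete setting: since $\phi$ and $\psi$ are complete isometries we have $\K_\infty(X) \simeq \K_\infty(\phi(X))$ and $\K_\infty(Y) \simeq \K_\infty(\psi(Y))$, and the images $\phi(X), \psi(Y)$ remain separable, so it suffices to prove that $\phi(X)$ and $\psi(Y)$ are stably isomorphic. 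Relabelling, I may assume outright that $X$ and $Y$ are separable and strongly TRO equivalent by $M_1, M_2$.

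The target is Corollary \ref{C: si-unital}: if the four C*-algebras $A = \ncl{M_2M_2^*}$, $B = \ncl{M_1M_1^*}$, $C = \ncl{M_2^*M_2}$, $D = \ncl{M_1^*M_1}$ admit countable approximate identities, then $X$ and $Y$ are stably isomorphic. Everything thus reduces to the $\sigma$-unitality of $A, B, C, D$. The main obstacle is that separability of $X$ and $Y$ does not by itself bound the size of $M_1, M_2$: a priori these TRO's, and hence $A, B, C, D$, may be non-separable and carry only uncountable approximate identities, so the hypothesis of Corollary \ref{C: si-unital} is not automatic.

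To overcome this I would pass to separable sub-TRO's implementing the \emph{same} equivalence. Fix countable dense subsets $D_X \subseteq X$ and $D_Y \subseteq Y$. Each $x \in D_X$ is a norm-limit of finite sums $\sum m\, y\, n^*$ with $m \in M_2$, $n \in M_1$, $y \in Y$, and each $y \in D_Y$ a norm-limit of finite sums $\sum m^* x\, n$ with $m \in M_2$, $n \in M_1$, $x \in X$. Collecting the countably many entries from $M_2$ and from $M_1$ occurring in all these approximations, let $M_2^0 \subseteq M_2$ and $M_1^0 \subseteq M_1$ be the (separable) sub-TRO's they generate. Then $M_2^0 Y M_1^{0*} \subseteq M_2 Y M_1^* \subseteq X$ while $D_X \subseteq \ncl{M_2^0 Y M_1^{0*}}$, so by density $X = \ncl{M_2^0 Y M_1^{0*}}$; symmetrically $Y = \ncl{M_2^{0*} X M_1^0}$. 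Hence $X$ and $Y$ are strongly TRO equivalent by the separable TRO's $M_1^0, M_2^0$.

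Finally, the C*-algebras $\ncl{M_2^0 (M_2^0)^*}$, $\ncl{M_1^0 (M_1^0)^*}$, $\ncl{(M_2^0)^* M_2^0}$, $\ncl{(M_1^0)^* M_1^0}$ built from the separable $M_i^0$ are themselves separable, and therefore $\sigma$-unital, so they admit countable approximate identities. Corollary \ref{C: si-unital} then applies to $M_1^0, M_2^0$ and yields that $X$ and $Y$ are stably isomorphic, which completes the converse. The only genuinely non-formal step is the separabilisation in the previous paragraph, a Löwenheim--Skolem--type closing-off construction; everything else is bookkeeping combining Theorem \ref{T: stable gives delta} and Corollary \ref{C: si-unital}.
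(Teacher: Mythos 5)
Your proof is correct, and it diverges from the paper's at the one step that carries all the weight: how separability of $X$ and $Y$ is converted into the countability hypothesis needed to run Theorem \ref{T: key}. Both you and the paper dispose of the backward implication via Theorem \ref{T: stable gives delta} and reduce, using $\K_\infty(X)\simeq\K_\infty(\phi(X))$, to the case where $X$ and $Y$ are strongly TRO equivalent by $M_1,M_2$. The paper then keeps the original (possibly non-separable) $M_1,M_2$ and works with the TRO $\T(X)$ generated by $X$ in its concrete representation (and likewise $\T(Y)$): the C*-algebras $\ncl{\T(X)\T(X)^*}$, $\ncl{\T(X)^*\T(X)}$ and their $Y$-counterparts are separable because $X$ and $Y$ are, and an appeal to \cite[Lemma 2.3]{Bro77} then produces, inside $M_1$ and $M_2$, the sequences demanded by the hypotheses of Theorem \ref{T: key} directly. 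You instead shrink $M_1,M_2$ themselves to separable sub-TRO's $M_1^0,M_2^0$ implementing the same equivalence; your closing-off is sound (the sub-TRO generated by the countably many collected entries is separable, sits inside $M_i$ since $M_iM_i^*M_i\subseteq M_i$, and density of $D_X$ in the closed subspace $\ncl{M_2^0Y(M_1^0)^*}\subseteq X$ forces equality), after which the four associated C*-algebras are separable and Corollary \ref{C: si-unital} applies verbatim. Your route has the advantage that $\sigma$-unitality is only ever invoked for honestly separable algebras, so the slightly delicate transfer of a countable approximate identity from an auxiliary separable algebra into sequences lying in the non-separable $M_i$ is avoided; the paper's route has the advantage of never modifying the given TRO's. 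Both are complete arguments.
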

\begin{proof}
By Theorem \ref{T: stable gives delta} it suffices to prove the forward implication.
Notice that if $\phi$ is a completely isometric map of $X$ then $\K_\infty(X) \simeq \K_\infty(\phi(X))$.
Hence without loss of generality, we may assume that $X$ and $Y$ are strongly TRO equivalent by $M_1$ and $M_2$.
Define the separable operator space
\[
\T(X) = \ncl{x_1x_2^*x_3x_4^*\cdots x_{2n}^* x_{2n+1} \mid n\geq 0, x_1,\dots, x_{2n+1} \in X}.
\]
Therefore the $C^*$-algebras
\[
\A := \ncl{\T(X)\T(X)^*} \qand \B := \ncl{\T(X)^*\T(X)}
\]
are separable as well and thus they possess countable c.a.i.'s.
Similar observations hold for the C*-algebras
\[
\C := \ncl{\T(Y) \T(Y)^*} \qand \D := \ncl{\T(Y)^*\T(Y)}.
\]
Then by \cite[Lemma 2.3]{Bro77} (see also \cite[Lemma 3.4]{Ele14}), there exist sequences $(m_i)_{i \in \bN}, (n_i)_{i \in \bN} \subseteq M_2$ and $(e_i)_{i \in \bN}, (f_i)_{i \in \bN} \subseteq M_1$ that satisfy the assumptions of Theorem \ref{T: key}.
Hence $X$ and $Y$ are stably isomorphic.
\end{proof}

We aim to show that stable isomorphism and strong $\Delta$-equivalence coincide for the class of unital operator spaces.
We require the next lemma.

\begin{lemma}\label{L: unital}
Let $X$ be a unital operator space and let $\phi \colon X \to \B(K_1, K_2)$ be a (not necessarily unital) non-degenerate completely isometric map.
If $A \subseteq \B(K_2)$ and $B \subseteq \B(K_1)$ are C*-algebras that act non-degenerately, and
\[
\phi(X) = \ncl{A \phi(X)} = \ncl{\phi(X) B},
\]
then $A$ and $B$ are unital.
\end{lemma}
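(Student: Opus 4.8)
The plan is to reduce the unitality of $A$ (and symmetrically of $B$) to a norm-convergence statement for a contractive approximate identity of $A$, after transporting the left action of $A$ on $\phi(X)$ into the C*-envelope of $X$. Throughout write $e$ for the unit of $X$ and put $u := \phi(e) \in \phi(X)$.

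The first ingredient is a module computation. Since $A$ is a C*-algebra it has a c.a.i. $(a_\la)$, and because $u \in \phi(X) = \ncl{A\phi(X)}$ the standard essential-module estimate (left multiplication by a c.a.i. converges in norm on $\ncl{A\phi(X)}$) gives $a_\la u \to u$ in norm. This is the only point where the specific element $u$ enters, and it is the seed of the argument.

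Next I would manufacture a $*$-homomorphism into $\cenv(X)$. The map $A \times \phi(X) \to \phi(X)$, $(a,w) \mapsto aw$, is an oplication (the c.a.i. of $A$ serves as the approximating net), so by the multiplier theory recalled in the preliminaries it induces a $*$-homomorphism into $\A_l(\phi(X))$; transporting along the complete isometry $\phi$ yields a $*$-homomorphism $\alpha \colon A \to \A_l(X)$ determined by $\alpha(a)x = \phi^{-1}(a\phi(x))$. Here I would use that $X$ is unital: then $\A_l(X)$ is a unital C*-algebra whose unit $I_X$ acts as the identity on $X$, and evaluation at the unit $s \mapsto s\cdot e$ identifies $\A_l(X)$ isometrically with a unital C*-subalgebra of $\cenv(X)$, sending $I_X$ to $e = 1_{\cenv(X)}$. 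Composing, I obtain an injective $*$-homomorphism $\Gamma \colon A \to \cenv(X)$ with the explicit formula $\Gamma(a) = \alpha(a)\cdot e = \phi^{-1}(au)$.

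The two decisive properties then fall out. For injectivity: if $\Gamma(a)=0$ then $\alpha(a)$ is the zero multiplier, so $a\phi(x)=0$ for all $x$; since $[\phi(X)K_1]=K_2$ by non-degeneracy of $\phi$, this forces $a=0$. For norm convergence: as $\Gamma(a_\la)=\phi^{-1}(a_\la u)$ and $\phi^{-1}$ is isometric, the relation $a_\la u \to u$ gives $\Gamma(a_\la) \to \phi^{-1}(u) = e = 1_{\cenv(X)}$ in norm. Because $\Gamma$ is an injective $*$-homomorphism its image is a closed C*-subalgebra of $\cenv(X)$, whence $1_{\cenv(X)} \in \Gamma(A)$; choose $a_0 \in A$ with $\Gamma(a_0)=1_{\cenv(X)}$. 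For every $a \in A$, applying injectivity to $\Gamma(a_0 a - a)=0=\Gamma(a a_0 - a)$ yields $a_0 a = a = a a_0$, so $a_0$ is a unit of $A$. The identical argument applied to the right action $\phi(X) \times B \to \phi(X)$, the algebra $\A_r(X)$, a right c.a.i. $(b_\mu)$ with $u b_\mu \to u$, and the non-degeneracy $[\phi(X)^*K_2]=K_1$, shows that $B$ is unital.

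I expect the main obstacle to be the middle step: checking that for unital $X$ the $*$-homomorphism $\alpha \colon A \to \A_l(X)$ lands, via evaluation at the unit, isometrically inside $\cenv(X)$ with $\Gamma(a)=\phi^{-1}(au)$ — in particular that the multiplier norm on $\A_l(X)$ coincides with $\|\,\cdot\,e\|$, so that the single relation $a_\la u \to u$ upgrades to genuine norm convergence of $\Gamma(a_\la)$. Everything after that — injectivity, the norm limit, closedness of the image, and extracting the unit — is routine.
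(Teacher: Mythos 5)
Your argument is correct and follows essentially the same route as the paper: an oplication yields a faithful $*$-homomorphism $A \to \A_l(X)$, unitality of $X$ identifies $\A_l(X)$ with a unital C*-subalgebra of the injective/C*-envelope via evaluation at the unit, and the condition $\phi(X) = \ncl{A\phi(X)}$ forces the image of a c.a.i.\ of $A$ to converge in norm to the unit, which then pulls back to a unit of $A$ by faithfulness. The point you flag as a potential obstacle — that the multiplier norm agrees with $\|\cdot\, e\|$ — is exactly what the cited identification $\A_l(X) \cong \{s \in \I(X) : sX \subseteq X,\ s^*X \subseteq X\}$ (a $*$-isomorphism, hence isometric, with $s\cdot e = s$ since the embedding $X \hookrightarrow \I(X)$ is unital) supplies, so there is no gap.
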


\begin{proof}
We will show that $A$ is unital.
The case for $B$ follows by similar arguments.
For the first step, notice that the completely contractive mapping
\[
m \colon A \times X \to X : (a,x) \mapsto \phi^{-1}(a \phi(x))
\]
is an oplication since $\phi(X) = \ncl{A \phi(X)}$.
Hence there is a $*$-homomorphism $\al \colon A \to \A_l(X)$ such that
\[
\al(a) (x) = \phi^{-1}(a\phi(x)) \foral a \in A, x \in X.
\]
In particular $\al$ is faithful since $K_2 = [\phi(X) K_1]$.

For the second step, let $\I(X) = \I(X + X^*)$ be the injective envelope of the unital space $X$.
Recall from \cite[Proposition 4.4.13 and Proposition 4.5.8]{BleLeM04} that $\A_l(X)$ is $*$-isomorphic to the unital C*-subalgebra
\[
\{s \in \I(X) \mid s X \subseteq X \text{ and } s^* X \subseteq X\}
\]
of $\I(X)$ by the $*$-homomorphism $\wt\pi \colon \A_l(X) \to \I(X)$ that satisfies $\wt\pi(u) x = u(x)$ for all $u \in \A_l(X), x \in X$.
Set $\pi = \wt\pi\circ \al$.
Then we have that
\[
\pi(a) x = \wt\pi \circ \al(a) x = \phi^{-1}(a \phi(x))
\]
for all $a \in A$ and $x \in X$.
Since $\phi(X) = \ncl{A \phi(X)}$ we obtain that
\[
X = \ncl{\phi^{-1}(A \phi(X))} = \ncl{\pi(A) X}.
\]
Thus we get that $I_X \in \ncl{\pi(A) X}$.
If $(a_i)$ is a c.a.i. of $A$ we then obtain
\[
\lim_i \pi(a_i) \pi(a) x = \pi(a) x \foral a \in A, x \in X,
\]
thus $\lim_i \pi(a_i) I_X = I_X$.
Hence we get that $I_X \in \pi(A)$.
However the embedding $X \hookrightarrow \I(X)$ is unital and so $I_X$ is also the unit of the C*-algebra $\I(X)$.
Thus it is also the unit of its unital C*-subalgebra $\wt\pi(\A_l(X))$.
Since $\pi(A) \subseteq \wt\pi(\A_l(X))$ we get that $\pi(A)$ is unital; thus so is $A$ as $\pi$ is faithful.
\end{proof}

\begin{corollary}\label{C: unital}
Two unital operator spaces are strongly $\Delta$-equivalent if and only if they are stably isomorphic.
\end{corollary}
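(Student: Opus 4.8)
The plan is to prove both implications, the reverse being immediate and the forward one carrying all the work. The reverse implication---that stably isomorphic operator spaces are strongly $\Delta$-equivalent---is precisely Theorem \ref{T: stable gives delta}, so I would simply cite it. For the forward implication I would reduce to a \emph{non-degenerate} strong TRO equivalence and then force the four associated C*-algebras to be unital; once they are unital they trivially carry finite (hence countable) approximate identities, and Corollary \ref{C: si-unital} delivers stable isomorphism of the representations, which transfers back to $X$ and $Y$.

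First I would unwind the hypothesis: there are completely isometric maps $\phi_0$ of $X$ and $\psi_0$ of $Y$ with $\phi_0(X)$ strongly TRO equivalent to $\psi_0(Y)$ by some TRO's. The crucial preparatory step is to arrange both representations to be non-degenerate, exactly as in the transitivity argument of Theorem \ref{T: eq delta}: viewing the inclusion of $\phi_0(X)$ as a completely contractive bimodule map of the non-degenerate C*-bimodule ${}_A \phi_0(X)_B$, I would pass to its non-degenerate compression via Proposition \ref{P: nd compression} (which preserves complete isometry by Lemma \ref{L: nd iff}), and then invoke Corollary \ref{C: key} to produce a matching non-degenerate completely isometric representation on the $Y$ side. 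The outcome is non-degenerate completely isometric maps $\phi$ of $X$ and $\psi$ of $Y$, together with TRO's $M_1, M_2$, such that $\phi(X) = \ncl{M_2 \psi(Y) M_1^*}$ and $\psi(Y) = \ncl{M_2^* \phi(X) M_1}$. I would then set $A = \ncl{M_2 M_2^*}$, $B = \ncl{M_1 M_1^*}$, $C = \ncl{M_2^* M_2}$ and $D = \ncl{M_1^* M_1}$. I expect this non-degeneracy reduction to be the main obstacle, since it must be executed without destroying the TRO equivalence.

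With non-degeneracy secured the remainder is a short assembly. Since ${}_A \phi(X)_B$ is a non-degenerate bimodule we have $\phi(X) = \ncl{A \phi(X)} = \ncl{\phi(X) B}$, and because $\phi$ is non-degenerate the C*-algebras $A$ and $B$ act non-degenerately on the ambient Hilbert spaces, e.g. $[A K_2] = [A \phi(X) K_1] = [\phi(X) K_1] = K_2$. As $X$ is unital, Lemma \ref{L: unital} then forces $A$ and $B$ to be unital; applying the same lemma to the non-degenerate map $\psi$ and the unital space $Y$ shows that $C$ and $D$ are unital as well. Unital C*-algebras admit finite, hence countable, approximate identities, so Corollary \ref{C: si-unital} gives that $\phi(X)$ and $\psi(Y)$ are stably isomorphic. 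Finally, since $\K_\infty(X) \simeq \K_\infty(\phi(X))$ and $\K_\infty(Y) \simeq \K_\infty(\psi(Y))$ for completely isometric maps, I would conclude $\K_\infty(X) \simeq \K_\infty(Y)$, that is, $X$ and $Y$ are stably isomorphic.
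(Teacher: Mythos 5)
Your proof is correct and follows essentially the same route as the paper: cite Theorem \ref{T: stable gives delta} for the reverse direction, reduce to non-degenerate representations via Proposition \ref{P: nd compression} and Corollary \ref{C: key}, invoke Lemma \ref{L: unital} to conclude that $A$, $B$, $C$, $D$ are unital, and finish with Corollary \ref{C: si-unital}. Your write-up is in fact slightly more careful than the paper's, spelling out why the hypotheses of Lemma \ref{L: unital} hold and making explicit the final transfer $\K_\infty(X) \simeq \K_\infty(\phi(X))$.
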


\begin{proof}
Because of Theorem \ref{T: stable gives delta}, it suffices to show the forward implication.
To this end let the completely isometric mappings $\phi$ of $X$ and $\psi$ of $Y$ such that $\phi(X)$ is strongly TRO equivalent to $\psi(Y)$ by $M_1$ and $M_2$.
Note that $\phi$ and $\psi$ may not be unital.
However by applying Proposition \ref{P: nd compression} to $\phi$ and then Corollary \ref{C: key} we may assume without loss of generality that they are non-degenerate.
Then Lemma \ref{L: unital} implies that the C*-algebras
\[
A = \ncl{M_2M_2^*}, B = \ncl{M_1M_1^*}, C = \ncl{M_2^*M_2}, D = \ncl{M_1^*M_1}
\]
are unital, and Corollary \ref{C: si-unital} finishes the proof.
\end{proof}

\begin{example}\label{E: trivial sep}
Recall the spaces $X_0, X, Y, M_1, M_2$ from Example \ref{E: trivial}.
If $M_1$ and $M_2$ are separable then so are the C*-algebras $\ncl{M_i^*M_i}$ and $\ncl{M_iM_i^*}$ for $i=1, 2$.
Therefore $X$ and $Y$ are stably isomorphic.
\end{example}

\begin{example}\label{E: compacts sep}
Let $X$ and $Y$ be dual operator spaces that act on separable Hilbert spaces.
By Example \ref{E: compacts} the spaces $\K(X)$ and $\K(Y)$ are strongly TRO equivalent when $X$ and $Y$ are weakly TRO equivalent.
Since $\K(X)$ and $\K(Y)$ are separable then Corollary \ref{C: sep} implies that they are stably isomorphic.
Similar arguments show that $\F(X)$ and $\F(Y)$, and $\R(X)$ and $\R(Y)$ are stably isomorphic as well.
\end{example}

\begin{example}\label{E: lattices sep}
Let $\L_1, \L_2$ be reflexive lattices acting on separable Hilbert spaces.
Let $A = \Alg(\L_1)$ and $B = \Alg(\L_2)$ be the corresponding algebras.
If $\theta \colon \L_1'' \to \L_2''$ is a $*$-isomorphism such that $\theta (\L_1)=\L_2$ then the algebras $A$ and $B$ are weakly TRO equivalent \cite{Ele12}.
Therefore by Example \ref{E: lattices}, Example \ref{E: compacts sep} and Corollary \ref{C: sep} the algebras $\K(A)$ and $\K(B)$ are stably isomorphic.
Likewise the algebras $\F(A)$ and $\F(B)$, and the algebras $\R(A)$ and $\R(B)$  are stably isomorphic.
\end{example}

\section{Applications}\label{S: app}

\subsection{Strong $\Delta$-equivalence and operator algebras}

Strong $\Delta$-equiva\-len\-ce coincides with strong Morita equivalence in the sense of Rieffel \cite{Rie74} for the class of C*-algebras.
In fact we have the following result.

\begin{theorem}\label{T: delta gives BMP}
Let $X$ and $Y$ be operator algebras with c.a.i.'s.
If they are strongly $\Delta$-equivalent as operator spaces then they are strongly Morita equivalent in the sense of Blecher, Muhly and Paulsen \cite{BMP00}.
\end{theorem}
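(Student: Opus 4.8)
The plan is to exhibit an explicit strong Morita context in the sense of \cite{BMP00}. First I would unwind the definition of strong $\Delta$-equivalence: there are completely isometric representations $\phi$ of $X$ and $\psi$ of $Y$ together with TRO's $M_1, M_2$ such that $\phi(X) = \ncl{M_2 \psi(Y) M_1^*}$ and $\psi(Y) = \ncl{M_2^* \phi(X) M_1}$. By applying Proposition \ref{P: nd compression} to $\phi$ and then Corollary \ref{C: key} I would arrange, without loss of generality, that all representations are non-degenerate, so that the $C^*$-algebras $A = \ncl{M_2 M_2^*}$, $B = \ncl{M_1 M_1^*}$, $C = \ncl{M_2^* M_2}$, $D = \ncl{M_1^* M_1}$ act faithfully and non-degenerately and carry c.a.i.'s. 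I would then suppress $\phi,\psi$ and treat $X,Y$ as the concrete operator spaces $\ncl{M_2 Y M_1^*}$ and $\ncl{M_2^* X M_1}$, \emph{while keeping in mind that the relevant products are the abstract operator algebra products of $X$ and $Y$}; the concrete products in $\B(K)$ are never used, which is what makes the argument work despite $\phi,\psi$ not being homomorphisms.

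The candidate bimodules are
\[
U := \ncl{M_2 Y} = \ncl{X M_1} \qand V := \ncl{M_2^* X} = \ncl{Y M_1^*},
\]
where the two descriptions of each coincide because $\ncl{CY} = Y = \ncl{YD}$ and $\ncl{AX} = X = \ncl{XB}$. I would make $U$ an operator $X$-$Y$-bimodule and $V$ an operator $Y$-$X$-bimodule through the completely isometric identifications $U \cong X \otimes_{hB} M_1 \cong M_2 \otimes_{hC} Y$ and $V \cong Y \otimes_{hD} M_1^* \cong M_2^* \otimes_{hA} X$: the left $X$-action on $U$ and the right $X$-action on $V$ come from the abstract multiplication on the $X$-tensor-factor, and symmetrically for $Y$. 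These actions are completely contractive because the multiplication of an operator algebra is, and the bimodules are non-degenerate since $\ncl{X \cdot X} = X$ and $\ncl{Y \cdot Y} = Y$. For the pairings I would take
\[
(\cdot,\cdot) \colon U \times V \to X, \quad (m_2 y,\, y' m_1^*) \mapsto m_2 (yy') m_1^*,
\]
\[
[\cdot,\cdot] \colon V \times U \to Y, \quad (m_2^* x,\, x' m_1) \mapsto m_2^*(xx') m_1,
\]
where $yy'$ and $xx'$ denote the abstract products; these land in $\ncl{M_2 Y M_1^*} = X$ and $\ncl{M_2^* X M_1} = Y$ respectively, and they are balanced over $Y$ and over $X$ by construction.

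The heart of the matter is to show that the induced maps on the module Haagerup tensor products are completely isometric isomorphisms $U \otimes_{hY} V \cong X$ and $V \otimes_{hX} U \cong Y$, which is precisely the strong Morita equivalence condition of \cite{BMP00}. I would factor the first one as
\[
U \otimes_{hY} V \cong M_2 \otimes_{hC} (Y \otimes_{hY} Y) \otimes_{hD} M_1^* \cong M_2 \otimes_{hC} Y \otimes_{hD} M_1^* \cong \ncl{M_2 Y M_1^*} = X,
\]
using on the one hand that $Y \otimes_{hY} Y \cong Y$ completely isometrically for an operator algebra with c.a.i. (see \cite{BleLeM04}), and on the other hand the TRO-module identifications $\ncl{M_2 Y} \cong M_2 \otimes_{hC} Y$ and $\ncl{Y M_1^*} \cong Y \otimes_{hD} M_1^*$; the computation for $V \otimes_{hX} U \cong Y$ is symmetric. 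The main obstacle is to upgrade these identifications from complete contractions to complete isometries: the nontrivial lower norm bound is where the TRO structure is essential, and I expect to obtain it by the same $C^*$-identity manipulation used in the proof of Proposition \ref{P: bij rep}, together with the c.a.i.'s of $A,B,C,D$ to pass from finite sums to the norm closures. Once these identifications are secured, the associativity identities $(u,v)u' = u[v,u']$ and $[v,u]v' = v(u,v')$ follow by a direct computation from associativity of the abstract products and of operator composition, completing the verification that $(X,Y,U,V,(\cdot,\cdot),[\cdot,\cdot])$ is a strong Morita context in the sense of \cite{BMP00}.
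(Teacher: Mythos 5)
Your proposal follows essentially the same route as the paper's own proof: your $U$ and $V$ are exactly the paper's bimodules $E=\ncl{M_2Y}=\ncl{XM_1}$ and $F=\ncl{M_2^*X}=\ncl{YM_1^*}$, your Haagerup-tensor identifications $U\simeq X\otimes^h_B M_1\simeq M_2\otimes^h_C Y$ and $V\simeq Y\otimes^h_D M_1^*\simeq M_2^*\otimes^h_A X$ are Lemma \ref{L: Mc eq}, the absorption $Y\otimes^h_Y Y\simeq Y$ is the content of Lemma \ref{L: Mc eq 2} (Cohen factorization plus \cite[Lemma 2.5]{BMP00}), and your final chain of isomorphisms is the paper's concluding computation. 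The architecture is correct and, with the tools you cite, it does go through.

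There is, however, one genuine gap: the assertion that the module actions and pairings are ``balanced \ldots by construction.'' This is precisely where the paper invests Lemmas \ref{L: Mc *-hom} and \ref{L: Mc bm}. The abstract product on $X$ is $x\cdot w=\phi^{-1}(\phi(x)\phi(w))$, whereas the actions of $A$, $B$, $M_1$, $M_2$ on $X$ are concrete products on the Hilbert spaces where $X$ sits as an operator space; since $\phi$ is only a complete isometry and not a homomorphism (as you yourself note), these two structures have no a priori relation. For the left $X$-action on $U\simeq X\otimes^h_B M_1$ to descend to the $B$-balanced tensor product, and for your pairings to be well defined and completely contractive, one needs the compatibilities $a(x\cdot w)=(ax)\cdot w$ and $(x\cdot w)b=x\cdot(wb)$, i.e.\ equation (\ref{exxx}) and its matrix versions. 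The paper obtains these by first running the oplication machinery to produce $*$-homomorphisms $\pi\colon A\to\B(K)$ and $\si\colon B\to\B(K)$ on the Hilbert space carrying the abstract product, with $\pi(a)\phi(x)=\phi(ax)$ and $\phi(x)\si(b)=\phi(xb)$, and only then computing; your remark that ``the concrete products in $\B(K)$ are never used'' is therefore not accurate --- reconciling the concrete module actions with the abstract algebra products is the real content of the step. Nothing here derails your argument, since the needed tool is the same multiplier/oplication device you already use elsewhere, but as written the balancedness claim is an unproved and non-trivial assertion rather than a matter of construction.
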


Combining Theorem \ref{T: delta gives BMP} with \cite[Theorem 6.2]{BMP00} gives the next corollary.

\begin{corollary}\label{C: same in C*}
Let $X$ and $Y$ be C*-algebras.
Then they are strongly $\Delta$-equivalent as operator spaces if and only if they are strongly Morita equivalent as C*-algebras.
\end{corollary}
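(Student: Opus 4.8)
The plan is to treat the two implications separately, leaning on Example \ref{E: sMe} for the reverse direction and on Theorem \ref{T: delta gives BMP} together with the cited \cite[Theorem 6.2]{BMP00} for the forward direction.

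For the reverse implication, I would suppose that $X$ and $Y$ are strongly Morita equivalent as C*-algebras in the sense of Rieffel, witnessed by an equivalence bimodule $M$. Example \ref{E: sMe} already records that $X = \ncl{MM^*}$ and $Y = \ncl{M^*M}$ are strongly TRO equivalent via the choice $M_1 = M_2 = M^*$. Taking $\phi$ and $\psi$ to be the identity representations, this is precisely the statement that $X$ and $Y$ are strongly $\Delta$-equivalent as operator spaces in the sense of Definition \ref{D: Delta op sp}. So this direction is immediate once Example \ref{E: sMe} is invoked.

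For the forward implication, I would assume $X$ and $Y$ are strongly $\Delta$-equivalent as operator spaces. Since every C*-algebra carries a contractive approximate identity, the hypotheses of Theorem \ref{T: delta gives BMP} are met, and that theorem yields that $X$ and $Y$ are strongly Morita equivalent in the sense of Blecher, Muhly and Paulsen. It then remains to pass from the BMP notion to the Rieffel notion, which is exactly the content of \cite[Theorem 6.2]{BMP00}: for C*-algebras, BMP strong Morita equivalence coincides with Rieffel's strong Morita equivalence. Combining the two statements delivers the conclusion.

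The substantive work lies entirely in Theorem \ref{T: delta gives BMP} and in the cited \cite[Theorem 6.2]{BMP00}; at the level of the corollary itself I expect no genuine obstacle, since it is a formal combination of these two results with the construction of Example \ref{E: sMe}. The only point requiring a moment's care is the verification that the C*-algebras $X$ and $Y$ possess c.a.i.'s so that Theorem \ref{T: delta gives BMP} indeed applies, which holds automatically for C*-algebras.
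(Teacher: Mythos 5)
Your proposal is correct and matches the paper's own argument: the forward direction is Theorem \ref{T: delta gives BMP} combined with \cite[Theorem 6.2]{BMP00}, and the reverse direction is the observation of Example \ref{E: sMe} that a Rieffel equivalence bimodule $M$ witnesses strong TRO equivalence with $M_1 = M_2 = M^*$. No gaps.
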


\begin{remark} \label{R: delta gives BMP strict}
The implication of  Theorem \ref{T: delta gives BMP} is strict.
In \cite[Example 8.2]{BMP00}, Blecher, Muhly and Paulsen construct unital operator algebras which are strongly Morita equivalent but not stably isomorphic.
However strong $\Delta$-equivalence coincides with stable isomorphism for unital operator algebras as shown in Corollary \ref{C: unital}.
\end{remark}

The proof of Theorem \ref{T: delta gives BMP} will follow from a series of steps and reductions.
For the first reduction, let $X$ and $Y$ be strongly TRO equivalent operator spaces and let $\phi\colon X \to \B(K)$ and $\psi \colon Y \to \B(H)$ be completely isometric maps such that $\phi(X)$ and $\psi(Y)$ are operator algebras with c.a.i.'s.
Then it suffices to show that $\phi(X)$ and $\psi(Y)$ are strongly Morita equivalent in the sense of \cite{BMP00}, i.e. we have to find a Morita context for $\phi(X)$ and $\psi(Y)$.
To this end let $M_1$ and $M_2$ be TRO's such that
\[
X = \ncl{M_2 Y M_1^*} \qand Y = \ncl{M_2^* X M_1}.
\]
Moreover let
\[
A = \ncl{M_2M_2^*}, \, B = \ncl{M_1M_1^*}, \, C = \ncl{M_2^*M_2}, \, D = \ncl{M_1^*M_1}.
\]
We aim to show that the spaces
\begin{align*}
E = \ncl{M_2 Y} = \ncl{X M_1} \qand F = \ncl{M_2^* X} = \ncl{Y M_1^*}
\end{align*}
provide the appropriate Morita context.

\begin{lemma} \label{L: Mc eq}
With the aforementioned notation, we have that
\[
X \simeq E \otimes ^h_D M_1^* \simeq M_2 \otimes_C^h F \qand Y \simeq F \otimes ^h_B M_1 \simeq M_2^* \otimes_A^h E.
\]
Consequently we obtain
\[
E\simeq X\otimes^h_B M_1 \simeq  M_2\otimes ^h_C Y \qand F\simeq Y \otimes^h_D M_1^*\simeq M_2^*\otimes ^h_A X.
\]
\end{lemma}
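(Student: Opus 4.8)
The plan is to reduce the entire statement to a single \emph{multiplication lemma}: if $N$ is a TRO with $C = \ncl{N^*N}$ and $Z$ is a nondegenerate left operator $C$-module, then the map $N \otimes^h_C Z \to \ncl{NZ}$, $n \otimes z \mapsto nz$, is a completely isometric isomorphism (and symmetrically for right modules and for $N^*$ in place of $N$). Granting this, each of the four isomorphisms in the first display is a direct instance, obtained by recording the relevant module structures and the matching closure identities. For example, $M_2$ is a right $C$-module and $F = \ncl{YM_1^*}$ is a nondegenerate left $C$-module (since $\ncl{CF} = \ncl{M_2^*M_2 Y M_1^*} \subseteq \ncl{YM_1^*} = F$ using $\ncl{CY}=Y$), while $\ncl{M_2 F} = \ncl{M_2 M_2^* X} = \ncl{AX} = X$; hence $M_2 \otimes^h_C F \simeq X$. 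Likewise $E$ is a right $D$-module, $M_1^*$ a left $D$-module, and $\ncl{E M_1^*} = \ncl{X M_1 M_1^*} = \ncl{XB} = X$, giving $E \otimes^h_D M_1^* \simeq X$; the two decompositions of $Y$ follow the same way from $\ncl{M_2^* E} = \ncl{CY} = Y$ and $\ncl{F M_1} = \ncl{YD} = Y$.

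For the multiplication lemma itself, complete contractivity and surjectivity onto $\ncl{NZ}$ are immediate: multiplication is completely contractive on the Haagerup tensor product, factors through the $C$-balancing by associativity of operator multiplication, and has dense range by the definition of $\ncl{NZ}$. The content is the reverse (complete isometry) inequality, which I expect to be the main obstacle. Here I would use that $N$ is a right Hilbert $C^*$-module over $C$ via $\langle n, n'\rangle = n^*n'$: given a finite sum $\sum_i n_i z_i$ of norm at most $1$, one rebalances the representative $\sum_i n_i \otimes z_i$ across $C$ by a polar/approximate-unit factorization built from $(\sum_j n_j^*n_j + \eps)^{-1/2} \in \wt{C}$, so that the resulting row factor in $N$ and column factor in $Z$ have product of norms at most $1+\eps$. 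This is the standard Hilbert $C^*$-module computation, and its matricial form is entirely analogous to the C*-identity estimate in the proof of Proposition \ref{P: bij rep}; alternatively one may invoke directly the identification of the module Haagerup tensor product of a $C^*$-module with an operator module from \cite[Chapter 8]{BleLeM04}. The delicate point is the non-unital bookkeeping, and nondegeneracy of the $C$-action on $Z$ is exactly what makes the limit represent the original element.

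Finally, the second display follows ``consequently'' by cancellation. Each step uses associativity of the module Haagerup tensor product together with two further instances of the lemma, namely the Morita identities $M_2^* \otimes^h_A M_2 \simeq C$, $M_1^* \otimes^h_B M_1 \simeq D$, $M_2 \otimes^h_C M_2^* \simeq A$, $M_1 \otimes^h_D M_1^* \simeq B$ (taking $Z = N^*$ above), and the module identities $A \otimes^h_A V \simeq V \simeq V \otimes^h_B B$ for nondegenerate modules. For instance, starting from $X \simeq E \otimes^h_D M_1^*$ one computes
\[
X \otimes^h_B M_1 \simeq E \otimes^h_D M_1^* \otimes^h_B M_1 \simeq E \otimes^h_D D \simeq E,
\]
and starting from $Y \simeq M_2^* \otimes^h_A E$ one computes $M_2 \otimes^h_C Y \simeq M_2 \otimes^h_C M_2^* \otimes^h_A E \simeq A \otimes^h_A E \simeq E$; the two formulas for $F$ are obtained symmetrically from $Y \simeq F \otimes^h_B M_1$ and $X \simeq M_2 \otimes^h_C F$. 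Since all modules and actions in sight are nondegenerate (as ${}_A X_B$ and ${}_C Y_D$ are nondegenerate C*-bimodules and $E,F$ inherit nondegeneracy, e.g. $\ncl{ED} = \ncl{X M_1 D} = E$), every invocation of the lemma and of the module identities is legitimate, which completes the proof.
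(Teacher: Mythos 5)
Your proposal is correct and follows essentially the same route as the paper: both define the multiplication map on the balanced Haagerup tensor product via the $D$- (resp.\ $C$-, $B$-, $A$-) balanced bilinear multiplication, prove it is a complete isometry by an approximate factorization coming from the TRO/Hilbert-module structure (the paper uses a net of row contractions $v(\lambda)$ with $m v(\lambda)^* v(\lambda) \to m$, you use the equivalent $(w^*w+\varepsilon)^{-1/2}$ rebalancing), and then derive the second display by associativity together with $M_1^* \otimes^h_B M_1 \simeq D$ and $E \otimes^h_D D \simeq E$. The only organizational difference is that you abstract a single multiplication lemma and instantiate it four times, whereas the paper proves one case explicitly and asserts the rest follow similarly.
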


\begin{proof}
By using that $E = \ncl{M_2Y}$ and the completely contractive and $D$-balanced bilinear map
$(m_2 y, m_1^*) \mapsto m_2 y m_1^*$ we obtain the completely contractive map
\[
\theta \colon E \otimes ^h_D M_1^* \mapsto X: e \otimes m_1^* \to e m_1^*.
\]
Let $v(\la)^* = [(n_1^\la)^*, \dots, (n_{k_\la}^\la)^*]$ be a net of row contractions such that  $n_i^\la \in M_1$ and $\lim_\la m v(\la)^*v(\la) = m$ for all $m\in M_1$.
Fix $e_1, \dots, e_n\in E$ and $m_1, \dots, m_n \in M_1$.
For $\eps >0$ there exists $\la$ such that
\begin{align*}
\| \sum_{i=1}^n e_i \otimes m_i \| -\eps
& \leq
\| \sum_{i=1}^n e_i \otimes(m_i v(\la)^* v(\la)) \| \\
& \leq
\| \sum_{i=1}^n e_i m_i v(\la)^* \|
\leq
\| \sum_{i=1}^n e_i m_i \|.
\end{align*}
Thus $\theta $ is isometric. Similarly we can prove that it is completely isometric.
The proofs of the other assertions in the first part follow in a similar way.
For the second part we have that
\[
X \otimes_B^h M_1 \simeq E \otimes_D^h (M_1^* \otimes_B^h M_1) \simeq E \otimes_D^h D \simeq E.
\]
The other assertions follow in a similar way and the proof is complete.
\end{proof}

\begin{lemma}\label{L: Mc *-hom}
Let ${}_A X_B$ be a C*-bimodule with $X = \ncl{AX} = \ncl{XB}$.
Let $\phi \colon X \to \B(K)$ be a completely contractive map such that $\phi(X)$ is an operator algebra with a c.a.i. .
Then there exist $*$-homomorphisms $\pi \colon A \to \B(K)$ and $\si \colon B \to \B(K)$ such that
\[
\pi(a) \phi(x) = \phi(ax) \qand \phi(x) \si(b) = \phi(xb)
\]
for all $a\in A, b \in B, x \in X$.
\end{lemma}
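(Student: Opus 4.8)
The plan is to exhibit the left $A$-action, transported through $\phi$, as an \emph{oplication} of $A$ on the operator algebra $\phi(X)$, and likewise for $B$ on the right, and then to feed these into the multiplier machinery recalled in the preliminaries. First I would compress to the essential subspace, replacing $K$ by $[\phi(X)K]$ (and symmetrically for the right action); this alters none of the asserted identities and guarantees that a c.a.i. $(u_\la)$ of the operator algebra $\phi(X)$ converges strongly to the identity. Writing $(e_i)$ for a c.a.i. of $A$, I would then consider the bilinear maps
\[
m \colon A \times \phi(X) \to \phi(X), \quad (a, \phi(x)) \mapsto \phi(ax),
\]
\[
m' \colon \phi(X) \times B \to \phi(X), \quad (\phi(x), b) \mapsto \phi(xb),
\]
which are the only candidates compatible with the desired relations.

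The step I expect to be the main obstacle is to verify that $m$ is a \emph{well-defined} and completely contractive oplication; everything else is then formal. The raw data give only that $(a,x) \mapsto \phi(ax)$ is a completely contractive bilinear map on $A \times X$, since the action $A \times X \to X$ and the map $\phi$ are both completely contractive; the difficulty is to pass to the second variable $\phi(x)$ in place of $x$, i.e. to check that $\phi(ax)$ depends only on $\phi(x)$ and is controlled by it. To do this I would use the c.a.i. $(u_\la)$ of $\phi(X)$ as an approximation device: since $u_\la \phi(x) \to \phi(x)$ and $\phi(x) u_\la \to \phi(x)$, one tries to rewrite $\phi(ax)$ through these strict limits together with the factorisations $X = \ncl{AX} = \ncl{XB}$, so as to estimate $\| \sum_k \phi(a x_k)\xi_k \|$ by $\|a\|\,\| \sum_k \phi(x_k)\xi_k \|$ and its matricial analogues purely in terms of $\phi(x)$. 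This is precisely the place where the module structure on $X$ and the algebra structure on $\phi(X)$ must be reconciled, and I expect it to be the delicate heart of the argument. Granting that $m$ is completely contractive, the oplication axiom is immediate: since $X = \ncl{AX}$ one has $e_i x \to x$ in $X$, whence $m(e_i, \phi(x)) = \phi(e_i x) \to \phi(x)$ for every $x$.

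Once $m$ is known to be an oplication of the C*-algebra $A$ on $\phi(X)$, the oplication theorem recalled above supplies a (necessarily unique) completely contractive map $\theta \colon A \to \M_l(\phi(X))$ with $\theta(a)\phi(x) = \phi(ax)$, and moreover $\theta$ is a $*$-homomorphism into the diagonal C*-algebra $\A_l(\phi(X))$ because $A$ is a C*-algebra. As $\phi(X) \subseteq \B(K)$ is an operator algebra with c.a.i., the space $\M_l(\phi(X))$ is completely isometrically identified with $\{s \in \B(K) \mid s\,\phi(X) \subseteq \phi(X)\}$, so $\theta$ is realised as a $*$-homomorphism $\pi \colon A \to \B(K)$ satisfying $\pi(a)\phi(x) = \phi(ax)$. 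Running the symmetric argument for $m'$, using $X = \ncl{XB}$ and the identification $\M_r(\phi(X)) \cong \{s \in \B(K) \mid \phi(X)\,s \subseteq \phi(X)\}$, yields a $*$-homomorphism $\si \colon B \to \A_r(\phi(X)) \subseteq \B(K)$ with $\phi(x)\si(b) = \phi(xb)$, which completes the proof.
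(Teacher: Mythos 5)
Your skeleton is exactly the paper's: view $m(a,\phi(x)) = \phi(ax)$ as an oplication of $A$ on $\phi(X)$, invoke the oplication theorem to get a $*$-homomorphism $\al \colon A \to \A_l(\phi(X))$, and realise $\M_l(\phi(X))$ concretely inside $\B(K)$ as $\{s \in \B(K) \mid s\,\phi(X) \subseteq \phi(X)\}$ (your preliminary compression to $[\phi(X)K]$ does not appear in the paper; it is harmless and if anything tidies up the use of that identification). The genuine gap sits precisely at the step you yourself flag as the ``delicate heart'': you never actually prove that $m$ is well defined and completely contractive, you only sketch a strategy, namely rewriting $\phi(ax)$ via the c.a.i.\ $(u_\la)$ of $\phi(X)$ and the factorisations $X = \ncl{AX} = \ncl{XB}$. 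That strategy cannot succeed at the stated level of generality, because for a merely completely contractive $\phi$ the lemma is false. Take $X = A = B = \bC^2$ with coordinatewise actions, and let $\phi \colon \bC^2 \to \bC = \B(\bC)$ be the state $\phi(x_1, x_2) = (x_1 + x_2)/2$: then $\phi$ is completely contractive, $\phi(X) = \bC$ is a unital operator algebra, and $X = \ncl{AX} = \ncl{XB}$, yet $m$ is ill defined ($\phi(1,-1) = 0$ while $\phi\bigl((1,0)\cdot(1,-1)\bigr) = 1/2$), and no $*$-homomorphism $\pi$ can satisfy $\pi(a)\phi(x) = \phi(ax)$, since for the projection $a = (1,0)$ and $x = (1,1)$ this forces $\pi(a) = 1/2$, which is not a projection in $\bC$.

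The resolution is that the lemma is only ever applied to completely isometric $\phi$ (this is the standing assumption in the reduction preceding Lemma \ref{L: Mc eq} in the proof of Theorem \ref{T: delta gives BMP}, and the proof of Lemma \ref{L: Mc bm} freely uses $\phi^{-1}$); the paper's proof simply says ``observe that $m$ is an oplication'', tacitly using this. Under that hypothesis the step you agonise over is immediate rather than delicate: injectivity gives well-definedness, and since $\phi$ preserves all matrix norms,
\[
\left\| \left[ \sum\nolimits_k \phi(a_{ik} x_{kj}) \right] \right\| = \left\| \left[ \sum\nolimits_k a_{ik} x_{kj} \right] \right\| \leq \| [a_{ij}] \| \, \| [x_{ij}] \| = \| [a_{ij}] \| \, \| [\phi(x_{ij})] \|
\]
by complete contractivity of the module action $A \times X \to X$; the approximate-unit axiom then follows from $X = \ncl{AX}$ exactly as you say, and the rest of your argument (the oplication theorem, the $*$-homomorphism into $\A_l(\phi(X))$, and the concrete realisation in $\B(K)$, using your compression for non-degeneracy) goes through verbatim. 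So the fix is to add ``completely isometric'' to the hypothesis, as the intended reading of the statement, and to replace your unproven approximation argument by the one-line norm identity above; without that hypothesis no argument can close the gap.
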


\begin{proof}
It suffices to give the proof for $\pi$.
First of all observe that the mapping $m \colon A \times \phi(X) \to \phi (X)$ with $m(a,\phi (x)) = \phi (ax)$ is an oplication.
Hence there exists a $*$-homomor\-phism $\al \colon A \to \A_l(\phi (X))$ such that $\al(a) (\phi (x)) =\phi(ax)$ for all $a\in A, x\in X$.
Let the completely isometric isomorphism
\[
\wt\pi \colon \M_l(\phi(X)) \to \{s \in B(K) \mid s \phi(X) \subseteq \phi(X)\}
\]
such that $\wt\pi(L) \phi(x) = L(\phi(x))$ for all $L \in \M_l(\phi(X)), x \in X$.
Then $\pi := \wt\pi \circ \al \colon A \to \B(K)$ is a completely contractive homomorphism and satisfies
\[
\pi(a) \phi(x) = \wt\pi(\al(a)) \phi(x) = \al(a)(\phi(x)) = \phi(ax),
\]
for all $a\in A$ and $x \in X$.
Completely contractive homomorphisms on C*-algebras are automatically $*$-homomorphisms, and the proof is complete.
\end{proof}

\begin{lemma}\label{L: Mc bm}
Let $E$ and $F$ be as above.
Then the space $E$ admits an operator $\phi(X)$-$\psi(Y)$-bimodule structure, and the space $F$ admits an operator $\psi(Y)$-$\phi(X)$-bimodule structure.
\end{lemma}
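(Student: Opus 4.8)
The plan is to equip $E$ with a left $\phi(X)$-action and a right $\psi(Y)$-action, and $F$ symmetrically with a left $\psi(Y)$-action and a right $\phi(X)$-action, realising each one-sided action as an oplication so that the bimodule axioms fall out with minimal computation. First I would transport the operator algebra structures along $\phi$ and $\psi$: since $\phi(X)$ and $\psi(Y)$ are operator algebras, the products $x_1 \cdot x_2 := \phi^{-1}(\phi(x_1)\phi(x_2))$ and $y_1\cdot y_2:=\psi^{-1}(\psi(y_1)\psi(y_2))$ turn $X\cong\phi(X)$ and $Y\cong\psi(Y)$ into operator algebras, so by the Blecher--Ruan--Sinclair characterisation the multiplications extend to complete contractions $X\otimes^h X\to X$ and $Y\otimes^h Y\to Y$. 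Using the identification $E\simeq X\otimes_B^h M_1$ of Lemma \ref{L: Mc eq} I would define the left action by $\phi(x)\cdot(x'\otimes m_1):=(x\cdot x')\otimes m_1$, and using $E\simeq M_2\otimes_C^h Y$ I would define the right action by $(m_2\otimes y)\cdot\psi(y'):=m_2\otimes(y\cdot y')$; the two actions on $F$ are defined in the same way from $F\simeq Y\otimes_D^h M_1^*$ and $F\simeq M_2^*\otimes_A^h X$.

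The first task is well-definedness and complete contractivity. Well-definedness over the balanced Haagerup tensor product requires that the transported products respect the module actions, that is $x\cdot(x'b)=(x\cdot x')b$ and $(cy)\cdot y'=c(y\cdot y')$; these are precisely the compatibilities supplied by the $*$-homomorphisms of Lemma \ref{L: Mc *-hom}, since with $\si\colon B\to\B(K)$ satisfying $\phi(x)\si(b)=\phi(xb)$ one has
\[
\phi((x\cdot x')b)=\phi(x\cdot x')\si(b)=\phi(x)\phi(x')\si(b)=\phi(x)\phi(x'b)=\phi(x\cdot(x'b)),
\]
and symmetrically for the $C$-balancing via the induced representation of $C$ on $\psi(Y)$. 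Complete contractivity then follows from the functoriality and associativity of the Haagerup tensor product: the left action factors as $\phi(X)\otimes^h(X\otimes_B^h M_1)\cong(\phi(X)\otimes^h X)\otimes_B^h M_1\to X\otimes_B^h M_1$, the last arrow being the complete contraction $X\otimes^h X\to X$ tensored with $\id_{M_1}$. Finally the c.a.i. $(u_i)$ of $\phi(X)$ satisfies $u_i\cdot e\to e$ for every $e\in E$, so the left action is an oplication; hence there is a completely contractive homomorphism $\theta_l\colon\phi(X)\to\M_l(E)$ with $\theta_l(\phi(x))(e)=\phi(x)\cdot e$. Symmetrically the right action yields a completely contractive homomorphism $\theta_r\colon\psi(Y)\to\M_r(E)$, and likewise for the two actions on $F$.

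It then remains to verify the bimodule axioms. Associativity of each one-sided action is immediate because $\theta_l$ and $\theta_r$ are homomorphisms. The one genuinely structural point, namely that the left and right actions commute, $(\phi(x)\cdot e)\cdot\psi(y)=\phi(x)\cdot(e\cdot\psi(y))$, is where the two different Haagerup realisations of $E$ would otherwise have to be reconciled element by element; I would sidestep this entirely by invoking the standard fact that for any operator space the left multipliers commute with the right multipliers (see \cite[Section 4.5]{BleLeM04}). Since the actions are implemented by $\theta_l(\phi(x))\in\M_l(E)$ and $\theta_r(\psi(y))\in\M_r(E)$, their commutation is automatic. I expect this compatibility to be the main obstacle if one tries to argue directly with representatives, which is exactly why routing the construction through $\M_l(E)$ and $\M_r(E)$ is the decisive move; the verification that each action is a well-defined completely contractive oplication, carried out above, is then the bulk of the remaining routine work.
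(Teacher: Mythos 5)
Your proof is correct, and it shares the paper's basic strategy: realise $E$ and $F$ via the balanced Haagerup tensor products of Lemma \ref{L: Mc eq}, transport the products of $\phi(X)$ and $\psi(Y)$ back to $X$ and $Y$, and use the $*$-homomorphisms of Lemma \ref{L: Mc *-hom} to check that the transported multiplication is balanced --- your identity $\phi((x\cdot x')b)=\phi(x\cdot(x'b))$ is exactly the paper's equation (\ref{exxx}), stated on the other side. Where you genuinely diverge is in how complete contractivity is obtained: the paper proves the estimate $\|(m^*\otimes x)\odot\phi(w)\|\le\|m^*\otimes x\|\,\|w\|$ by hand, inserting a net of row contractions $u(t)$ from $M_2$ with $u(t)^*u(t)m^*\to m^*$ and invoking a rectangular matrix version of the balancing identity, whereas you obtain it softly from associativity and functoriality of the module Haagerup tensor product, factoring the action through $(\phi(X)\otimes^h X)\otimes^h_B M_1\to X\otimes^h_B M_1$. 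Your route is cleaner, provided you note that the multiplication $\phi(X)\otimes^h X\to X$ descends through $\otimes^h_B$ precisely because it is a right $B$-module map (the compatibility you verified); the appeal to Blecher--Ruan--Sinclair is superfluous, since multiplication on a concrete operator algebra is already completely contractive for the Haagerup norm. You also make explicit two points the paper dispatches with ``the other assertions follow in a similar way'': that each one-sided action is an oplication and hence lands in $\M_l(E)$ or $\M_r(E)$, and that the two actions commute because left and right multipliers of an operator space always commute. This is a worthwhile addition, since the left and right actions are defined through two different tensor decompositions of $E$ and their compatibility is not otherwise immediate.
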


\begin{proof}
Recall that $F \simeq M_2^* \otimes_A^h X$, and define the operation
\[
(m^* \otimes x) \odot \phi(w) = m^* \otimes \phi^{-1}(\phi(x) \phi(w))
\]
for $m \in M_2$ and $x, w \in X$.
We will show that it defines a right module structure.
The other assertions follow in a similar way.

Let the $*$-homomorphism $\pi \colon A \rightarrow B(K)$ induced by Lemma \ref{L: Mc *-hom}.
Let $a\in A$ and for $x, w \in X$ let $z \in X$ such that $\phi(z)=\phi (x)\phi(w)$.
Then we get
\[
\phi (ax) \phi(w)=\pi(a) \phi(x) \phi(w) = \pi(a) \phi(z) = \phi(a z).
\]
Consequently we obtain $a z = \phi^{-1} (\phi(a x) \phi (w))$ and therefore
\begin{equation}\label{exxx}
a \phi^{-1}(\phi(x) \phi (w)) = \phi^{-1} (\phi (ax) \phi(w)).
\end{equation}
Similar arguments imply the matrix version equation
\[
[a_{ij}]\cdot \phi_{n}^{-1} \left( \phi_n([x_{ij}]) \phi_n([w_{ij}]) \right) = \phi^{-1}_n\left( \phi_n([a_{ij}] [x_{ij}]) \phi_n([w_{ij}]) \right)
\]
for all $n \in \bN$.
In particular by regarding every rectangular matrix as a submatrix of an appropriate square matrix with zeroes we may extend these formulas to cover appropriate rectangular cases.

Let $u(t)^* = [(m_1^t)^*, \dots, (m_{l_t}^t)^*]$ be a net of row contractions such that  $m_i^t \in M_2$ and $\lim_\la u(t)^* u(t) m^* = m^*$ for all $m\in M_2$.
Fix $m\in M_2$ and $x, w \in X$.
For $\eps > 0$ there exists $t$ such that
\[
\|m^*\otimes \phi ^{-1}(\phi (x)\phi(w)) \| -\eps
\leq
\|(u(t)^* u(t) m^*) \otimes \phi ^{-1}(\phi (x)\phi (w)) \|.
\]
We denote by $\phi_{n,k}$ the entry-wise application of $\phi$ on a $n \times k$ matrix.
Using then the rectangular version of equation (\ref{exxx}) we have
\begin{align*}
\|m^* \otimes \phi^{-1}(\phi (x)\phi(w)) \| - \eps
& \leq
\| u(t)^* \otimes u(t) m^* \phi ^{-1}(\phi (x)\phi (w)) \| \\
& \leq
\| \phi^{-1}_{l_t,1} \left( \phi_{l_t,1}(u(t) m^*x) \phi(w) \right) \| 
\leq
\|m^*x\| \|w\|.
\end{align*}
By Lemma \ref{L: Mc eq} we have that $\|m^*x\|= \|m^*\otimes x\|$ and thus
\[
\|m^* \otimes \phi ^{-1}(\phi(x) \phi(w)) \|
\leq
\|m^*\otimes x\| \|w\|.
\]
Hence the map 
\[
F \times \phi (X) \to F : (m^*\otimes x, \phi (w)) \mapsto m^*\otimes \phi ^{-1}(\phi (x)\phi (w))
\]
is contractive.
Showing that this map is completely contractive follows by similar arguments and the proof is complete.
\end{proof}

\begin{lemma} \label{L: Mc eq 2}
With the aforementioned notation we have that
\[
F\simeq F\otimes _{\phi (X)}^h X \qand E\simeq E\otimes ^h_{\psi (Y)} Y.
\]
\end{lemma}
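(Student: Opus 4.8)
The plan is to transport the whole problem through the complete isometry $\phi$ and thereby reduce it to the standard \emph{module identity} for the module Haagerup tensor product over the approximately unital operator algebra $\phi(X)$. Since $\phi(X)$ is an operator algebra with a c.a.i. and $\phi \colon X \to \phi(X)$ is a completely isometric linear isomorphism, I would first pull back the product of $\phi(X)$ to make $X$ an operator algebra with c.a.i. completely isometrically isomorphic to $\phi(X)$; concretely one sets $x \star w := \phi^{-1}(\phi(x)\phi(w))$. Equipping $X$ with the left $\phi(X)$-action $\phi(a)\cdot x := \phi^{-1}(\phi(a)\phi(x))$, the map $\phi$ becomes a completely isometric isomorphism of left $\phi(X)$-modules from $X$ onto $\phi(X)$ (viewed as a left module over itself), because $\phi(\phi(a)\cdot x) = \phi(a)\phi(x)$. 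As the module Haagerup tensor product is functorial in each variable, $\id_F \otimes \phi$ then gives a complete isometry $F\otimes^h_{\phi(X)} X \simeq F\otimes^h_{\phi(X)} \phi(X)$.

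Next I would record that $F$ is a \emph{nondegenerate} right $\phi(X)$-module. Using the identification $F \simeq M_2^* \otimes_A^h X$ of Lemma \ref{L: Mc eq} together with the action of Lemma \ref{L: Mc bm}, this action reads $(m^* \otimes x)\odot\phi(w) = m^*\otimes(x\star w)$; since $X$ has a c.a.i. for $\star$ we get $\ncl{X\star X} = X$ and hence $\ncl{F\odot\phi(X)} = F$. I would then invoke the standard fact that, for an approximately unital operator algebra and a nondegenerate right operator module over it, the multiplication map is a completely isometric isomorphism, i.e. $F\otimes^h_{\phi(X)} \phi(X)\simeq F$ (see \cite[Section 3.4]{BleLeM04}). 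Combining this with the left-module isomorphism $X\simeq\phi(X)$ above yields
\[
F\otimes^h_{\phi(X)} X \;\simeq\; F\otimes^h_{\phi(X)}\phi(X)\;\simeq\; F .
\]
The assertion $E\simeq E\otimes^h_{\psi(Y)} Y$ follows by the symmetric argument: $\psi(Y)$ is an operator algebra with c.a.i., $E$ is a nondegenerate right $\psi(Y)$-module by Lemma \ref{L: Mc bm}, and $\psi$ makes $Y$ a left $\psi(Y)$-module completely isometrically isomorphic to $\psi(Y)$.

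The main obstacle is less a single hard estimate than the bookkeeping of matching the module structures precisely, so that the module identity applies verbatim, and confirming the nondegeneracy of $F$ as a right $\phi(X)$-module; here the c.a.i. of $\phi(X)$ is exactly what guarantees both nondegeneracy and that the identification is completely isometric rather than merely completely contractive. An alternative, more hands-on route, parallel to the proof of Lemma \ref{L: Mc eq}, is to define $\Theta\colon F\otimes^h_{\phi(X)} X\to F$ on elementary tensors by $f\otimes x\mapsto f\odot\phi(x)$, check that it is well defined on the balanced tensor product via associativity of the action (so that $(f\odot\phi(a))\odot\phi(x) = f\odot\phi(a\star x)$), and then use a net of row contractions in $M_2$ together with the c.a.i. of $\phi(X)$ to show that $\Theta$ is a completely isometric surjection. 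This avoids citing the module identity but essentially reproduces its proof in the present setting.
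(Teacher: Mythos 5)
Your proposal is correct and follows essentially the same route as the paper: establish that $F$ is a non-degenerate right $\phi(X)$-module via Lemma \ref{L: Mc bm} and the c.a.i. (the paper invokes Cohen's Factorization Theorem here), then apply the standard module identity $F\otimes^h_{\phi(X)}\phi(X)\simeq F$ (the paper cites \cite[Lemma 2.5]{BMP00} where you cite \cite{BleLeM04}), identifying $X$ with $\phi(X)$ as a left $\phi(X)$-module along the way. Your explicit bookkeeping of the transported product $x\star w=\phi^{-1}(\phi(x)\phi(w))$ just makes precise an identification the paper leaves implicit.
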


\begin{proof}
Lemma \ref{L: Mc bm} and Cohen's Factorization Theorem imply that $F$ is a non-degenerate right module over $\phi(X)$.
As a consequence \cite[Lemma 2.5]{BMP00} applies to obtain that $F\otimes _{\phi (X)}^h \phi(X) \simeq F$.
Analogous arguments apply to show that $E \simeq E\otimes ^h_{\psi (Y)} Y$.
\end{proof}

\noindent \textbf{Proof of Theorem \ref{T: delta gives BMP}.}
Using Lemmas \ref{L: Mc eq} and \ref{L: Mc eq 2} we have that
\begin{align*}
F\otimes ^h_{\phi(X)} E
 \simeq
(F\otimes ^h_{\phi(X)} X) \otimes ^h_B M_1
 \simeq
F\otimes ^h_B  M_1
\simeq
Y.
\end{align*}
Similarly we get that $E \otimes^h_{\psi(Y)} F \simeq X$.
Hence $\phi(X)$ and $\psi(Y)$ are strongly Morita equivalent in the sense of \cite{BMP00}.
\hfill{$\qedsymbol$}

\subsection{Second duals}

Recall from \cite[Definition 2.2(ii)]{EPT10} that two dual operator spaces $X$ and $Y$ are called \emph{$\Delta$-equivalent as dual operator spaces} if there are completely isometric normal representations $\phi$ and $\psi$, and TRO's $M_1$ and $M_2$ such that
\[
\phi(X) = \wcl{M_2 \psi(Y) M_1^*} \qand \psi(Y) = \wcl{M_2^* \phi(X) M_1}.
\]
Furthermore $X$ and $Y$ are called \emph{stably isomorphic as dual operator spaces} if there is a cardinal $J$ and a w*-continuous completely isometric map from $M_J(X)$ onto $M_J(Y)$ \cite[Definition 2.2(iii)]{EPT10}.
Recall that if $X \subseteq \B(K)$ then $M_J(X)$ denotes the subspace of $M_J(\B(K)) \simeq \B(K \otimes \ell^2(J))$ with entries from $X$.
In \cite[Theorem 2.5]{EPT10} it is shown that $\Delta$-equivalence coincides with stable isomorphism for dual operator spaces.

We aim to show the connection between the second duals of strongly $\Delta$-equivalent operator spaces.
We begin with a lemma.

\begin{lemma} \label{L: weak}
Let $M$ be a w*-closed TRO and suppose that $Y$ is a left dual operator module over $C = \wcl{M^*M}$ such that $Y = \wcl{C Y}$.
Then $Y$ is $\Delta$-equivalent to the normal Haagerup tensor product $E = M \otimes ^{\sigma  h}_C Y$.
\end{lemma}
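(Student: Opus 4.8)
The plan is to realise $E$ concretely as the w*-closed span of $M Y$ sitting over the Hilbert space on which $Y$ already acts, and then to read off the $\Delta$-equivalence directly with $M_2$ this copy of $M$ and $M_1 = \bC I$. First I would fix a normal completely isometric representation $Y \subseteq \B(K_1, K_2)$ implementing the dual module structure, so that $C$ acts normally on $K_2$ through a normal $*$-representation $\pi \colon C \to \B(K_2)$ with $c \cdot y = \pi(c) y$; the non-degeneracy hypothesis $Y = \wcl{CY}$ then reads $\wcl{\pi(C) Y} = Y$. Applying the creation-operator construction of Subsection \ref{Ss: TRO} to $\pi$ (viewing $C = \wcl{M^*M}$) produces the Hilbert space $L := M \otimes_\pi K_2$ and a normal completely contractive TRO morphism $t \colon M \to \B(K_2, L)$ satisfying
\[
t(m)^* t(n) = \pi(m^* n) \qand t(mc) = t(m) \pi(c) \qforal m, n \in M, \ c \in C.
\]
I would then set $M_2 := \wcl{t(M)} \subseteq \B(K_2, L)$, which is a w*-closed TRO with $\wcl{M_2^* M_2} = \wcl{\pi(M^*M)} = \pi(C)$ since $\pi$ is normal.

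The central step is to identify $E$ with $\wcl{M_2 Y} \subseteq \B(K_1, L)$. The $C$-balancing relation $t(mc)y = t(m)(c \cdot y)$ shows that $m \otimes y \mapsto t(m) y$ is well defined on the balanced algebraic tensor product, and by the representation theory of the normal Haagerup tensor product over $C$ \cite{EPT10} it extends to a normal completely isometric isomorphism
\[
\Phi \colon E = M \otimes^{\sigma h}_C Y \xrightarrow{\ \sim\ } \wcl{M_2 Y}.
\]
This is the only genuinely nontrivial point of the argument: the well-definedness and complete isometry of $\Phi$ rest entirely on the standard properties of $\otimes^{\sigma h}_C$, while everything around it is bookkeeping. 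I expect this to be the main obstacle, and it is where the normality of $t$ and of the tensor product realisation must be invoked carefully.

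Finally I would take $\phi := \id_Y$ and $\psi := \Phi$ as the normal completely isometric representations, together with the TRO's $M_2 = \wcl{t(M)}$ and $M_1 := \bC I_{K_1}$. On the one hand $\psi(E) = \wcl{M_2 Y} = \wcl{M_2\, \phi(Y)\, M_1^*}$, and on the other
\[
\wcl{M_2^* \psi(E) M_1} = \wcl{M_2^* M_2\, Y} = \wcl{\pi(C) Y} = Y = \phi(Y),
\]
using $\wcl{\pi(C) Y} = Y$. These are exactly the two relations in the definition of $\Delta$-equivalence of dual operator spaces recalled above, so $Y$ and $E$ are $\Delta$-equivalent. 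Once $\Phi$ is in hand, the verification of these two equalities is immediate.
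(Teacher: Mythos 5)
Your construction coincides with the paper's: you induce the Hilbert space $M\otimes_\pi K_2$, take the creation-operator TRO morphism $t$, map $E$ onto $\wcl{t(M)Y}$ via $m\otimes y\mapsto t(m)y$, and read off the two relations of $\Delta$-equivalence with $M_2=\wcl{t(M)}$ and $M_1=\bC I_{K_1}$; the closing computation $\wcl{M_2^*M_2Y}=\wcl{\pi(C)Y}=Y$ is exactly the paper's. The problem is the step you yourself single out as the crux: you assert that $\Phi$ is a \emph{completely isometric} isomorphism ``by the representation theory of the normal Haagerup tensor product'', but the universal property of $\otimes^{\sigma h}_C$ applied to the balanced, separately w*-continuous, completely contractive bilinear map $(m,y)\mapsto t(m)y$ only yields that $\Phi$ is well defined, normal and completely \emph{contractive}. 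The reverse inequality is not a formal property of the tensor product; it is precisely where the hypothesis that $M$ is a TRO (rather than an arbitrary right $C$-module) must be used, and leaving it uncited and unproved is a genuine gap.

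The paper fills this gap as follows. Choose a net of row contractions $w(t)=[m_1^t,\dots,m_{l_t}^t]$ with entries in $M$ such that $w(t)w(t)^*m\to m$ for all $m\in M$. For $z=\sum_{i=1}^n m_i\otimes y_i$ and $\eps>0$ one then has, for suitable $t$,
\[
\|z\|-\eps\;\leq\;\Big\|\sum_{i=1}^n w(t)w(t)^*m_i\otimes y_i\Big\|\;\leq\;\Big\|\sum_{i=1}^n w(t)^*m_iy_i\Big\|,
\]
the second inequality coming from the $C$-balancing (the entries of $w(t)^*m_i$ lie in $M^*M\subseteq C$, so the sum factors as the row contraction $w(t)$ against a column over $Y$). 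Applying $\psi$ entrywise to the column $\sum_i w(t)^*m_iy_i$ and using $\psi((m_j^t)^*m_iy_i)=t(m_j^t)^*t(m_i)\psi(y_i)$ together with the fact that $t$ is completely contractive bounds this by $\|\sum_i t(m_i)\psi(y_i)\|=\|\Phi(z)\|$. One also needs the Krein--Smulian theorem to see that the range of $\Phi$ is w*-closed, so that $\Phi(E)=\wcl{t(M)Y}$; you use this identification implicitly. The remaining bookkeeping in your argument (well-definedness from balancing, and the final two module equalities) is correct and matches the paper.
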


\begin{proof}
Fix a w*-continuous completely isometric map $\psi \colon Y \to B(H_1, H_2)$, and a w*-continuous injective $*$-homomorphism $\rho \colon C \to B(H_2)$ such that
\[
\psi (cy) = \rho(c) \psi (y) \foral c\in C, y\in Y.
\]
Fix the faithful TRO morhism $s \colon M \to B(H_2, M \otimes_\rho H_2)$ and let the bilinear w*-continuous completely contractive $C$-balanced map
\[
M \times Y \to B(H_1, M \otimes_\rho H_2): (m,y) \mapsto s(m) \psi (y).
\]
This map induces a w*-continuous completely contractive map
\[
\theta \colon E \to B(H_1, M \otimes_\rho H_2) : m \otimes y \mapsto s(m) \psi (y).
\]
We aim to show that $\theta$ is completely isometric.
Let $w(t) = [m^t_1, \dots, m^t_{l_t}]$ be a net of row contractions such that $m^t_i \in M$ satisfying $\lim_t w(t) w(t)^*m = m$ for all $m \in M$.
Then for $z = \sum_{i=1}^n m_i \otimes y_i \in E$ and $\eps > 0$ there exists a $t$ such that
\begin{align*}
\nor{z} - \eps
& \leq
\| \sum_{i=1}^n w(t) w(t)^*m_i \otimes y_i \| 
\leq
\| \sum_{i=1}^n w(t)^* m_i y_i  \|.
\end{align*}
Since $s$ is completely contractive we have that
\begin{align*}
\| \psi_{l_t, 1} (\sum_{i=1}^n w(t)^* m_i y_i ) \|
& =
\| s_{l_t, 1} (w(t))^* \sum_{i=1}^n s (m_i) \psi (y_i ) \| 
 \leq
\| \sum_{i=1}^n s (m_i) \psi (y_i )\|
\end{align*}
where $\psi_{l_t, 1}$ is the application of $\psi$ entry-wise on the $l_t \times 1$ matrix, and likewise for $\rho_{l_t, 1}$ and $s_{l_t, 1}$.
Since $\eps > 0$ is arbitrary we have $\nor{z} = \nor{\theta (z)}$.
The proof that $\theta$ is completely isometric follows in a similar way.

The Krein-Smulian Theorem yields that $\theta$ has a w*-closed range, hence $\theta (E) = \overline{\theta (E)}^{w^*} = \wcl{s(M) \psi (Y)}$;
see for example \cite[Theorem A.2.5]{BleLeM04}.
In addition we have that
\begin{align*}
\wcl{s (M)^* \theta (E)}
& =
\wcl{s (M)^* s (M) \psi (Y)} 
 =
\wcl{\rho(M^*M) \psi (Y)} \\
& =
\wcl{\rho(C) \psi (Y)} 
 =
\psi (\wcl{C Y})
=
\psi(Y).
\end{align*}
Since $s(M)$ is a TRO then $E$ and $Y$ are $\Delta$-equivalent.
\end{proof}

\begin{theorem}\label{T: second dual}
Strongly $\Delta$-equivalent operator spaces admit $\Delta$-equivalent second duals in the sense of \cite{EPT10}.
\end{theorem}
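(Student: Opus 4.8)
The plan is to reduce to the concrete situation of strong TRO equivalence and then transport everything to the biduals, where the normal module Haagerup tensor product together with Lemma \ref{L: weak} takes over. Since the second adjoint of a completely isometric map is a weak*-continuous complete isometry, since $X^{**} \simeq \phi(X)^{**}$ for any completely isometric representation $\phi$, and since $\Delta$-equivalence of dual operator spaces is invariant under weak*-continuous complete isometric isomorphism, we may assume from the outset that $X$ and $Y$ are strongly TRO equivalent by $M_1$ and $M_2$, with
\[
X = \ncl{M_2 Y M_1^*} \qand Y = \ncl{M_2^* X M_1}.
\]
Set $A, B, C, D$ as usual and introduce the intermediate operator space $E = \ncl{M_2 Y} = \ncl{X M_1}$. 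The goal then becomes to show that both $X^{**}$ and $Y^{**}$ are $\Delta$-equivalent to $E^{**}$.

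For the $Y$-side the key identification is the completely isometric isomorphism $E \simeq M_2 \otimes^h_C Y$ given by $m \otimes y \mapsto my$; this is part of Lemma \ref{L: Mc eq}, whose proof uses only the TRO-equivalence data and is therefore valid for operator spaces (alternatively it follows from the same row-contraction estimate as in the proof of Lemma \ref{L: weak}). Taking second duals and using that the bidual of a module Haagerup tensor product is the normal module Haagerup tensor product of the biduals, I would obtain
\[
E^{**} \simeq M_2^{**} \otimes^{\sigma h}_{C^{**}} Y^{**}.
\]
Here $M_2^{**}$ is a w*-closed TRO with $C^{**} = \wcl{(M_2^{**})^* M_2^{**}}$, and $Y^{**}$ is a left dual operator module over $C^{**}$ satisfying $Y^{**} = \wcl{C^{**} Y^{**}}$. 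Thus Lemma \ref{L: weak}, applied with $M = M_2^{**}$, yields that $Y^{**}$ is $\Delta$-equivalent to $E^{**}$.

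For the $X$-side I would argue symmetrically, starting from the isomorphism $E \simeq X \otimes^h_B M_1$ of Lemma \ref{L: Mc eq}; taking second duals gives $E^{**} \simeq X^{**} \otimes^{\sigma h}_{B^{**}} M_1^{**}$. Applying the right-handed form of Lemma \ref{L: weak} — obtained by passing to adjoints, since the adjoint of a w*-closed TRO is a w*-closed TRO and $\Delta$-equivalence is preserved under taking adjoints — gives that $X^{**}$ is $\Delta$-equivalent to $E^{**}$. Finally, since $\Delta$-equivalence of dual operator spaces coincides with stable isomorphism by \cite[Theorem 2.5]{EPT10} and is therefore transitive, chaining the two steps through $E^{**}$ shows that $X^{**}$ is $\Delta$-equivalent to $Y^{**}$.

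The step I expect to be the main obstacle is the bidual bookkeeping: verifying that $(M_2 \otimes^h_C Y)^{**}$ is genuinely the normal module Haagerup tensor product $M_2^{**} \otimes^{\sigma h}_{C^{**}} Y^{**}$, and that the w*-closed TRO $M_2^{**}$ carries the associated von Neumann algebra $C^{**} = \wcl{(M_2^{**})^* M_2^{**}}$ with $Y^{**} = \wcl{C^{**} Y^{**}}$, so that the hypotheses of Lemma \ref{L: weak} are met. Once these standard but delicate identifications are in place, the remainder is a formal double application of Lemma \ref{L: weak} followed by transitivity of stable isomorphism.
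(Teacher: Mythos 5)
Your proposal is correct and follows essentially the same route as the paper: reduce to strong TRO equivalence, introduce $E=\ncl{M_2Y}=\ncl{XM_1}$, use Lemma \ref{L: Mc eq} and the identification of the bidual of the module Haagerup tensor product with the normal one (the paper cites \cite[Section 3, Example 6]{BleKas08} for exactly the ``bidual bookkeeping'' you flag), and then apply Lemma \ref{L: weak} twice before invoking transitivity. The only cosmetic difference is on the $X$-side, where the paper passes to the adjoint spaces $E^*\simeq M_1^*\otimes^h_B X^*$ so as to apply Lemma \ref{L: weak} in its stated left-module form, whereas you invoke a right-handed version of the lemma obtained by taking adjoints --- the same maneuver.
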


\begin{proof}
Let $X$ and $Y$ be two strongly $\Delta$-equivalent operator spaces.
We write $X^{\#\#}$ and $Y^{\#\#}$ for their second duals.
Without loss of generality assume that $X = \ncl{M_2 Y M_1^*}$ and $Y = \ncl{M_2^* X M_1}$ for the TRO's $M_1$ and $M_2$, and let
\[
A = \ncl{M_2M_2^*}, \, B = \ncl{M_1M_1^*}, \, C = \ncl{M_2^*M_2}, \, D = \ncl{M_1^*M_1}.
\]
Define $E = \ncl{M_2 Y}$.
By Lemma \ref{L: Mc eq} we have that $E \simeq M_2 \otimes^h_C Y$.
By \cite[Section 3, Example 6]{BleKas08} we get that $E^{\#\#}$ is w*-isomorhic to the normal Haagerup tensor product $M_2^{\#\#} \otimes^{\si h}_{C^{\#\#}} Y^{\#\#}$.
Then
\[
C^{\#\#} = \wcl{(M_2^{\#\#})^* M_2^{\#\#}},
\]
since $M_2^{\#\#}$ is a w*-closed TRO.
Lemma \ref{L: weak} implies that $Y^{\#\#}$ and $E^{\#\#}$ are $\Delta$-equivalent as dual operator spaces.

Similarly, $(E^*)^{\#\#}$ is $\Delta$-equivalent to $(X^*)^{\#\#}$.
It follows that $E^* \simeq M_1^* \otimes_B^h X^*$ as in the proof of  Lemma \ref{L: Mc eq}.
Thus we obtain
\[
(E^*)^{\#\#} \simeq (M_1^*)^{\#\#} \otimes_{B^{\#\#}}^{\si h} (X^*)^{\#\#}
\]
as above and Lemma \ref{L: weak} applies again.
Consequently $E^{\#\#}$ is $\Delta$-equivalent to $X^{\#\#}$ and transitivity completes the proof.
\end{proof}

\subsection{TRO envelopes}

For the next results recall the notion of the TRO envelope of an operator space from Section \ref{S: Pre}.

\begin{theorem}\label{T: tro env}
Strongly $\Delta$-equivalent operator spaces admit strongly $\Delta$-equi\-va\-lent TRO envelopes.
\end{theorem}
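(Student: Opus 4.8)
The plan is to reduce to the concrete setting of strong TRO equivalence and then transport the canonical TRO‑envelope quotient across the equivalence, using that the TRO envelope is the \emph{minimal} TRO extension.

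First I would reduce to strong TRO equivalence. Since the TRO envelope is an invariant of complete isometry and strong TRO equivalence is a special case of strong $\Delta$‑equivalence (take the identity representations in Definition \ref{D: Delta op sp}), it suffices to assume $X = \ncl{M_2 Y M_1^*}$ and $Y = \ncl{M_2^* X M_1}$ for TRO's $M_1, M_2$ and to show that the generated TRO's $\T(X)$ and $\T(Y)$ are strongly TRO equivalent by the same $M_1, M_2$. This is an absorption computation: substituting $X = \ncl{M_2 Y M_1^*}$ into a word $x_1 x_2^* \cdots x_{2n+1}$ produces an outer $M_2$ on the left, an outer $M_1^*$ on the right, and interior factors lying in $C = \ncl{M_2^* M_2}$ and $D = \ncl{M_1^* M_1}$ between consecutive letters; these are absorbed into the $Y$‑letters since $Y$ is a non‑degenerate $C$‑$D$‑bimodule, so the word lies in $M_2 \T(Y) M_1^*$. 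The reverse inclusion is the symmetric substitution $Y = \ncl{M_2^* X M_1}$, absorbing the $A = \ncl{M_2 M_2^*}$ and $B = \ncl{M_1 M_1^*}$ factors into the $X$‑letters. This yields $\T(X) = \ncl{M_2 \T(Y) M_1^*}$ and $\T(Y) = \ncl{M_2^* \T(X) M_1}$.

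Next I would transport the canonical quotient. By the universal property there is a surjective triple morphism $\theta_X \colon \T(X) \to \tenv(X)$ fixing $X$, and by the bimodule remark in Section \ref{S: Pre} it is an $A$‑$B$‑bimodule map, hence (after passing to its non‑degenerate compression via Proposition \ref{P: nd compression}) the middle component of a non‑degenerate completely contractive bimodule map $(\pi, \theta_X, \si) \colon {}_A \T(X)_B \to \B(L_1, L_2)$. Applying Corollary \ref{C: key} to the strong TRO equivalence of $\T(X)$ and $\T(Y)$ produces $(\rho, \psi, \tau)$ on ${}_C \T(Y)_D$ with $\tenv(X) = \ncl{\theta_X(\T(X))}$ strongly TRO equivalent to $\ncl{\psi(\T(Y))}$. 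Two facts make this useful: (i) $\psi$ is a triple morphism, by a direct computation combining the triple‑morphism and bimodule identities for $\theta_X$ with $s(m_1)s(m_2)^* = \si(m_1 m_2^*)$ and $t(n_1)t(n_2)^* = \pi(n_1 n_2^*)$ from the proof of Proposition \ref{P: bij rep}; and (ii) $\psi|_Y$ is completely isometric, because the norm estimate in that proof evaluates $\theta_X$ only on elements of $X$, where $\theta_X$ is a complete isometry, so the estimate becomes an equality. Thus $(\ncl{\psi(\T(Y))}, \psi|_Y)$ is a TRO extension of $Y$, and the universal property gives a surjective triple morphism $\eta \colon \ncl{\psi(\T(Y))} \to \tenv(Y)$.

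It remains to identify $\ncl{\psi(\T(Y))}$ with $\tenv(Y)$, i.e.\ to show $\eta$ is injective; equivalently, that $\ncl{\psi(\T(Y))}$ has no nonzero TRO ideal meeting $Y$ trivially, since $\tenv(Y)$ is the minimal TRO extension (any proper such ideal would yield a strictly smaller TRO extension, contradicting the universal property). I would prove this by transporting ideals across the strong TRO equivalence between $\tenv(X)$ and $\ncl{\psi(\T(Y))}$: writing $M_1', M_2'$ for the transported TRO's, a boundary ideal $J$ of $\ncl{\psi(\T(Y))}$ corresponds to the TRO ideal $\ncl{M_2' J (M_1')^*}$ of $\tenv(X)$, and this is an order isomorphism that carries nonzero ideals to nonzero ideals and preserves trivial intersection with the distinguished subspace. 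As $\tenv(X)$ is minimal it has no nonzero boundary ideal, forcing $J = 0$, so $\eta$ is an isomorphism and $\tenv(Y) \cong \ncl{\psi(\T(Y))}$ is strongly TRO equivalent to $\tenv(X)$, hence strongly $\Delta$‑equivalent. I expect this last identification to be the main obstacle: one must verify that $\ncl{M_2' J (M_1')^*}$ is again a TRO ideal, that it meets $X$ trivially precisely when $J$ meets $Y$ trivially (using an approximate‑identity argument together with $Y = \ncl{M_2^* X M_1}$), and that the round trip through $M_2', M_1'$ recovers $J$ from its $C$‑$D$‑bimodule structure so that nonzero ideals remain nonzero.
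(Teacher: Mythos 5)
Your steps (1)--(4) are essentially sound and run parallel to the paper's own argument: after reducing to strong TRO equivalence, both produce a TRO extension of $Y$ that is strongly TRO equivalent to $\tenv(X)$ --- the paper simply takes $\ncl{M_2^*\,\tenv(X)\,M_1}$, while you reach the same object by pushing the canonical quotient $\theta_X\colon \T(X)\to\tenv(X)$ through Proposition \ref{P: bij rep}. Your claims that $\psi$ is a triple morphism and that $\psi|_Y$ is completely isometric are correct, though the latter needs faithfulness of $\pi,\si$ and an approximate-unit argument to turn the contractivity estimate into an equality; this is at the level of detail the paper itself leaves implicit.

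The genuine gap is in step (5). You reduce injectivity of $\eta$ to ``$\ncl{\psi(\T(Y))}$ has no nonzero TRO ideal meeting $Y$ trivially'' and then invoke minimality of $\tenv(X)$ in the form ``$\tenv(X)$ has no nonzero TRO ideal meeting $X$ trivially.'' That statement is false: the rigidity of the TRO (or C*-) envelope only excludes nonzero ideals $J'$ for which the \emph{quotient map remains completely isometric} on $X$, not ideals with $J'\cap X=0$. For instance $\cenv(\operatorname{span}\{1,z\})=\rC(\bT)$, and the ideal of functions vanishing at $\pm 1$ is nonzero yet meets $\operatorname{span}\{1,z\}$ only in $0$. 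Consequently, transporting $\ker\eta$ to the ideal $J'=\ncl{M_2'\,(\ker\eta)\,(M_1')^*}$ of $\tenv(X)$ and checking only that $J'\cap X=0$ proves nothing. To salvage your route you would have to show that $\tenv(X)\to\tenv(X)/J'$ is completely isometric on $X$, i.e.\ transport the boundary-ideal property (a quotient-norm estimate), not merely trivial intersection, across the equivalence --- a step your sketch does not address and which is essentially as hard as the theorem itself. The paper avoids this entirely: it equips $\ncl{M_2\,\tenv(Y)\,M_1^*}$ with a TRO-extension structure over $X$, uses the universal property of $\tenv(X)$ to obtain a morphism $\theta$ down to $\tenv(X)$, then compresses by the approximate units $u(t), v(\la)$ and takes a BW-limit to produce a completely contractive triple morphism from $\tenv(Y)$ into $\ncl{M_2^*\,\tenv(X)\,M_1}$ fixing $Y$; uniqueness in the universal property of $\tenv(Y)$ then forces the identification. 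You would need to supply an argument of comparable strength to close step (5).
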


\begin{proof}
Suppose that $X$ and $Y$ are strongly $\Delta$-equivalent.
We will show that there are completely isometric mappings of their TRO envelopes with strongly TRO equivalent images.
By Corollary \ref{C: key}, without loss of generality we may assume that $X \subseteq \I(\S(X))$ and that $Y \subseteq \B(H_1, H_2)$ is strongly TRO equivalent to $X$ by some $M_1$ and $M_2$.
Set
\[
A = \ncl{M_2M_2^*}, \, B = \ncl{M_1M_1^*}, \,
C = \ncl{M_2^*M_2}, \, D = \ncl{M_1^*M_1}.
\]
We will also assume that $Y$ acts non-degenerately.
Since
\[
\ncl{A \tenv(X)} = \tenv(X) = \ncl{\tenv(X) B},
\]
we get that $\tenv(X)$ is strongly TRO equivalent to the space
\[
\T(Y) := \ncl{M_2^* \tenv(X) M_1}.
\]
Notice that $\T(Y)$ is a TRO extension of $Y$.
It suffices to show that $\T(Y)$ is indeed $\tenv(Y)$.

To this end let $\Phi \colon \I(\S(Y)) \to \B(H_2 \oplus H_1)$ be the completely isometric map that extends $\id \colon Y \to \B(H_1, H_2)$.
Let $Z$ be the image of $\tenv(Y)$ inside $\B(H_2 \oplus H_1)$ under $\Phi$.
Since $Z$ is a $C$-$D$-bimodule and $Y$ acts non-degenerately we get that $Z \subseteq \B(H_1, H_2)$.
Since $Z$ contains $Y$ it suffices to find a completely contractive TRO morphism $\psi \colon Z \to \T(Y)$ that fixes $Y$ pointwise.
Then the universal property of $\tenv(Y)$ completes the proof.

For convenience let $\odot$ be the multiplication rule on $\I(\S(Y))$ that turns $Z$ into a TRO extension of $Y$.
We can then define a multiplication rule, which we will denote by the same symbol $\odot$, on the space
\[
\T(X) := \ncl{M_2 Z M_1^*} = \ncl{M_2 \Phi(\tenv(Y)) M_1^*}.
\]
This multiplication rule is given by
\[
(m_1 z_1 n_1^*) \odot (n_2 z_2^* m_2^*) \odot (m_3 z_3 n_3^*)
:= m_1 (z_1 n_1^*n_2 \odot z_2^* \odot m_2^* m_3 z_3) n_3^*
\]
for $z_i \in Z$, $m_i \in M_2$, $n_i \in M_1$, and $i=1, 2, 3$.
It turns $\T(X)$ into a TRO extension of $\ncl{M_2 Y M_1^*} = X$.
Associativity follows by the fact that $(Z, \odot)$ is a TRO and also a $C$-$D$-bimodule.
Therefore there exists a completely contractive TRO morphism $\theta \colon \T(X) \to \tenv(X)$ that fixes $X$ pointwise.
Let the c.a.i.'s of $C$ and $D$ be given by $c_t = u(t)^* u(t)$ and $d_\la = v(\la)^* v(\la)$ for the row contractions
\[
u(t)^* = [ (n_1^t)^*, \dots, (n_{l_t}^t)^*] \qand v(\la)^* = [ (f_1^\la)^*, \dots, (f_{k_\la}^\la)^*].
\]
Then the nets $(c_t)$ and $(d_\la)$ form also a left unit and a right unit for $Y$.
For $t \in I$ and $\la \in \La$ let us define the completely contractive linear mapping
\[
Z \to M_{l_t, k_\la}(\T(X)): z \mapsto u(t) z v(\la)^*.
\]
Let $\theta_{t, \la} \colon M_{l_t, k_\la}(\T(X)) \to M_{l_t, k_\la}(\tenv(X))$ be the application of $\theta$ entry-wise and define the linear mapping $\psi_{t, \la}$ on $Z$ by
\[
\psi_{t, \la}(z) = u(t)^* \left( \theta_{t, \la} (u(t) z v(\la)^* \right) v(\la) = u(t)^* [\theta(m_i^t z (n_j^\la)^*)] v(\la)
\]
which takes values inside $\T(Y) = \ncl{M_2^* \tenv(X) M_1}$.
Since $\theta$ is completely contractive we get that $\psi_{t, \la}$ is a completely contractive linear mapping.
By passing to a subnet, suppose that $(\psi_{t,\la})$ converges to a completely contractive mapping $\psi$ of $Z$ in the BW-topology.

However the restriction of $\theta$ to $X = \ncl{M_2 Y M_1^*}$ is the identity mapping.
Hence we get in particular that
\begin{align*}
\psi(y)
& =
\lim_{(t, \la)} u(t) \left( u(t)^* y v(\la)^* \right) v(\la)
=
\lim_{(t, \la)} c_t y d_\la
=
y,
\end{align*}
since $(c_t)$ and $(d_\la)$ are subnets of the left and right approximate units of $Y$.
It remains to show that $\psi$ is a TRO morphism.
Then it will follow also that $\psi$ takes values inside the TRO extension $\T(Y)$ of $Y$.
Since $\theta$ is a TRO morphism and a $C$-$D$-bimodule map then so is $\theta_{t, \la}$.
Hence we obtain
\begin{align*}
\psi_{t, \la}(z_1) \psi_{t,\la}(z_2)^* \psi_{t,\la}(z_3) 
& = \\
& \hspace{-3cm} =
u(t)^* \, \theta_{t, \la} \left(u(t) \, z_1 \, (v(\la)^* v(\la))^2 \,  z_2^* \, (u(t)^* u(t))^2 \ z_3 \, v(\la)^* \right) \, v(\la).
\end{align*}
On the other hand we have that
\begin{align*}
\psi_{t,\la}(z_1 z_2^* z_3)
& =
u(t)^* \theta_{t, \la} (u(t) z_1 z_2^* z_3 v(\la)^*) v(\la).
\end{align*}
Since the $u(t)$ and $v(\la)$ are contractions, by an $\eps/2$-argument it suffices to show that
\[
\lim_{(t,\la)} (v(\la)^* v(\la))^2 z_2^* (u(t)^* u(t))^2 = z_2^*.
\]
However this is immediate as $u(t)^* u(t) = c_t$ and $v(\la)^* v(\la) = d_\la$ give respectively a left and a right c.a.i. of $Z$.
\end{proof}

Recall that the TRO envelope of a unital operator space is a C*-algebra.
The following result follows as a consequence of Theorem \ref{T: tro env}.
Nevertheless we provide an independent proof.

\begin{theorem}\label{T: cenv}
Let $X$ and $Y$ be unital operator spaces.
If $X$ and $Y$ are strongly $\Delta$-equivalent then $\cenv(X)$ and $\cenv(Y)$ are stably isomorphic as C*-algebras.
\end{theorem}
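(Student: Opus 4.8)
The plan is to bypass Theorem \ref{T: tro env} and instead route through stable isomorphism together with a stability property of the (C*-)envelope. The starting point is Corollary \ref{C: unital}: since $X$ and $Y$ are unital and strongly $\Delta$-equivalent, they are stably isomorphic, so there is a completely isometric isomorphism $X \otimes \K \simeq \K_\infty(X) \simeq \K_\infty(Y) \simeq Y \otimes \K$. Because the TRO envelope is an invariant of the operator space structure — a surjective complete isometry $u \colon X_1 \to X_2$ extends, via the universal property of $\tenv$, to a triple isomorphism $\tenv(X_1) \simeq \tenv(X_2)$ (apply the universal property to $u$ and to $u^{-1}$ and use uniqueness) — this already yields a triple isomorphism $\tenv(X \otimes \K) \simeq \tenv(Y \otimes \K)$.

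The heart of the argument is the identification $\tenv(X \otimes \K) \simeq \cenv(X) \otimes \K$, and likewise for $Y$. Since $X$ is unital, $\cenv(X) = \tenv(X)$ is a unital C*-algebra, and $\cenv(X) \otimes \K$ is a TRO containing $X \otimes \K$ completely isometrically. First I would check that $X \otimes \K$ generates $\cenv(X) \otimes \K$ as a TRO: the ternary products $(x_1 \otimes k_1)(x_2 \otimes k_2)^*(x_3 \otimes k_3) = (x_1 x_2^* x_3) \otimes (k_1 k_2^* k_3)$ densely span the spatial tensor product, because the ternary products of elements of $X$ densely span $\cenv(X)$ (using $I_X \in X$ to recover $X^*$ as $I_X^{\ } x^* I_X$) while those of matrix units densely span $\K$. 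Hence $(\cenv(X) \otimes \K, \iota)$ is a TRO extension of $X \otimes \K$, and the universal property provides a surjective triple morphism $q \colon \cenv(X) \otimes \K \to \tenv(X \otimes \K)$ fixing $X \otimes \K$ pointwise.

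It remains to prove that $q$ is injective. Its kernel is a closed triple ideal of the C*-algebra $\cenv(X) \otimes \K$, hence a closed two-sided ideal; since $\K$ is simple it has the form $J \otimes \K$ for a closed ideal $J$ of $\cenv(X)$. As $q$ is completely isometric on $X \otimes \K$, compressing by $e_{11}$ shows that the quotient map $\cenv(X) \to \cenv(X)/J$ is unital and completely isometric on $X$. The minimality (boundary) property of the C*-envelope then forces $J = 0$, so $q$ is a triple isomorphism. I expect this boundary-ideal step to be the main obstacle, since it is precisely where the defining feature of the C*-envelope — the triviality of its Shilov boundary ideal — has to be invoked.

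Finally, combining the two identifications with the triple isomorphism of the first paragraph yields a triple isomorphism $\theta \colon \cenv(X) \otimes \K \to \cenv(Y) \otimes \K$. A triple isomorphism between C*-algebras induces a $*$-isomorphism of their left linking algebras: the assignment $\pi(m n^*) := \theta(m)\theta(n)^*$ is a well-defined, multiplicative, $*$-preserving bijection on $\ncl{(\cenv(X) \otimes \K)(\cenv(X) \otimes \K)^*} = \cenv(X) \otimes \K$ onto $\cenv(Y) \otimes \K$. Thus $\cenv(X) \otimes \K$ and $\cenv(Y) \otimes \K$ are $*$-isomorphic, which is exactly the assertion that $\cenv(X)$ and $\cenv(Y)$ are stably isomorphic as C*-algebras. (For comparison, the non-independent route would instead feed $\tenv(X) = \cenv(X)$ and $\tenv(Y) = \cenv(Y)$ from Theorem \ref{T: tro env} into Corollary \ref{C: same in C*} and then invoke the Brown--Green--Rieffel theorem, using that unital C*-algebras are $\sigma$-unital.)
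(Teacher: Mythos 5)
Your argument is correct and follows essentially the same route as the paper's own proof: Corollary \ref{C: unital} gives a complete isometry $\K_\infty(X) \simeq \K_\infty(Y)$, one identifies $\tenv(\K_\infty(X))$ with $\K_\infty(\cenv(X))$, and the resulting triple isomorphism of the stabilized C*-algebras yields a $*$-isomorphism. The only difference is that you prove the identification $\tenv(X \otimes \K) \simeq \cenv(X) \otimes \K$ yourself via the universal property together with the boundary-ideal characterization of $\cenv(X)$, whereas the paper simply cites \cite[Corollary 8.3.12]{BleLeM04}; you are also more careful than the paper in the last step, where the triple isomorphism is not itself a $*$-isomorphism but induces one on the linking algebras.
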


\begin{proof}
Corollary \ref{C: unital} implies that $\K_\infty(X)$ is completely isometrically isomorphic to $\K_\infty(Y)$.
Thus their TRO envelopes must be completely isometrically isomorphic as well.
Then \cite[Corollary 8.3.12]{BleLeM04} (or \cite[Appendix 1]{Ble01}) implies that $\tenv(\K_\infty(X)) \simeq \K_\infty(\tenv(X))$ and $\tenv(\K_\infty(Y)) \simeq \K_\infty(\tenv(Y))$ as TRO's.
However $X$ and $Y$ are unital and so the TRO's $\tenv(X)$ and $\tenv(Y)$ are actually C*-algebras, denoted by $\cenv(X)$ and $\cenv(Y)$.
Therefore the TRO isomorphism is a $*$-isomorphism, and so the C*-envelopes are stably isomorphic.
\end{proof}

\begin{acknow}
The authors acknowledge support from the London Ma\-thematical Society (Scheme 4, Grant Ref: 41449).
\end{acknow}


\end{document}